\documentclass[11pt]{amsart}

\usepackage{etex}
\usepackage{amsmath, amssymb}
\usepackage{array}
\usepackage[frame,cmtip,arrow,matrix,line,graph,curve]{xy}
\usepackage{graphpap, color, paralist, pstricks}
\usepackage[mathscr]{eucal}
\usepackage[pdftex]{graphicx}
\usepackage[pdftex,colorlinks,backref=page,citecolor=blue]{hyperref}
\usepackage{tikz}

\setlength{\oddsidemargin}{0in}
\setlength{\evensidemargin}{0in}
\setlength{\marginparwidth}{0in}
\setlength{\marginparsep}{0in}
\setlength{\marginparpush}{0in}
\setlength{\topmargin}{0in}
\setlength{\headsep}{8pt}
\setlength{\footskip}{.3in}
\setlength{\textheight}{9.2in}
\setlength{\textwidth}{6.5in}
\setlength{\parskip}{4pt}
\linespread{1.2}

\newtheorem{theorem}{Theorem}[section]
\newtheorem{proposition}[theorem]{Proposition}
\newtheorem{corollary}[theorem]{Corollary}
\newtheorem{lemma}[theorem]{Lemma}

\theoremstyle{definition}
\newtheorem{definition}[theorem]{Definition}
\newtheorem{example}[theorem]{Example}

\newtheorem{question}[theorem]{Question}
\newtheorem{remark}[theorem]{Remark}

\numberwithin{equation}{section}

\newcommand{\PP}{\mathbb{P}}
\newcommand{\QQ}{\mathbb{Q}}
\newcommand{\CC}{\mathbb{C}}

\newcommand{\ZZ}{\mathbb{Z}}

\newcommand{\cO}{\mathcal{O} }

\newcommand{\cH}{\mathcal{H} }

\newcommand{\cQ}{\mathcal{Q} }
\newcommand{\cS}{\mathcal{S} }

\newcommand{\rA}{\mathrm{A} }
\newcommand{\rH}{\mathrm{H} }
\newcommand{\rM}{\mathrm{M} }
\newcommand{\rN}{\mathrm{N} }

\newcommand{\rU}{\mathrm{U} }

\newcommand{\bB}{\mathbf{B} }
\newcommand{\bC}{\mathbf{C} }
\newcommand{\bE}{\mathbf{E} }
\newcommand{\bG}{\mathbf{G} }
\newcommand{\bH}{\mathbf{H} }
\newcommand{\bK}{\mathbf{K} }
\newcommand{\bL}{\mathbf{L} }
\newcommand{\bM}{\mathbf{M} }
\newcommand{\bR}{\mathbf{R} }
\newcommand{\bU}{\mathbf{U} }
\newcommand{\bX}{\mathbf{X} }
\newcommand{\bY}{\mathbf{Y} }
\newcommand{\bZ}{\mathbf{Z} }

\newcommand{\spec}{\mathrm{Spec}\;}
\newcommand{\proj}{\mathrm{Proj}\;}
\newcommand{\im}{\mathrm{im}}

\def\Sym{\mathrm{Sym} }
\def\Hom{\mathrm{Hom} }
\def\Ext{\mathrm{Ext} }

\def\Gr{\mathrm{Gr} }

\def\MzzGrt{\overline{\rM}_{0,0}(\Gr(n-1, V), 2)}
\def\SL{\mathrm{SL}}

\def\git{/\!/ }

\def\Eff{\mathrm{Eff} }
\def\Hilb{\mathrm{Hilb} }
\def\Bl{\mathrm{Bl} }

\newcommand{\ses}[3]{0\rightarrow{#1}\rightarrow{#2}\rightarrow{#3}\rightarrow0}

\makeatletter
\providecommand{\leftsquigarrow}{%
  \mathrel{\mathpalette\reflect@squig\relax}%
}
\newcommand{\reflect@squig}[2]{%
  \reflectbox{$\m@th#1\rightsquigarrow$}%
}
\makeatother

\hypersetup{%
pdftitle={Mori program for the moduli space of conics in Grassmannian},%
pdfauthor={Kiryong Chung, Han-Bom Moon},%
pdfkeywords={moduli space, rational curves, Grassmannian, birational geometry},%
citecolor=blue,%
linkcolor=blue,%
}

\keywords{moduli space, rational curves, Grassmannian, birational geometry}
\subjclass[2010]{14D22, 14F42, 14E15.}

\begin{document}

\title{Mori's program for the moduli space of conics in Grassmannian}
\thanks{This research was supported by Kyungpook National University Research Fund, 2015.}

\author{Kiryong Chung}
\address{Department of Mathematics Education, Kyungpook National University, 80 Daehakro, Bukgu, Daegu 41566, Korea}
\email{krchung@knu.ac.kr}

\author{Han-Bom Moon}
\address{Department of Mathematics, Fordham University, Bronx, NY 10458}
\email{hmoon8@fordham.edu}

\begin{abstract}
We complete Mori's program for Kontsevich's moduli space of degree 2 stable maps to Grassmannian of lines. We describe all birational models in terms of moduli spaces (of curves and sheaves), incidence varieties, and Kirwan's partial desingularization. 
\end{abstract}

\maketitle


\section{Introduction and results}\label{sec:intro}

\subsection{Rational curves in Grassmannian of lines}\label{ssec:curvesinGrassmannian}

The space of rational curves in $\Gr(2,n)$ and its compactifications has been studied in various context. In the study of Fano manifolds, the space of lines in $\Gr(2,n)$ has been one of the main tools to study the geometry of the linear or quadratic sections of $\Gr(2,n)$ (\cite{Pro94}). In fact, the codimension two linear section of $\Gr(2, 5)$ is the answer for Hirzebruch's question in dimension 4: Classify all minimal compactifications of $\CC^4$.

On the other hand, a compactified moduli space of conics in $\Gr(2,n)$ for $n=5$ has been an essential ingredient in the construction of a new compact hyperk\"ahler manifold. For example, in \cite{IM11}, by following the general construction of a symplectic two-form on the moduli space of sheaves or rational curves (\cite{dJS04, KM09}), the authors proved that a certain contraction of the Hilbert scheme of conics in $\Gr(2,5)\cap H_1\cap H_2$ where $H_{d}$ is a hypersurface of degree $d$, is a hyperk\"ahler manifold discovered by O'Grady in \cite{OG06}. 

In the study of homological mirror symmetry, it is important to present a pair of Calabi-Yau threefolds which are derived equivalent but not birationally equivalent. Only few examples of such pairs have been known. In \cite{HT13}, the authors provided new such a pair by using the double cover (the so-called \emph{double symmetroid}) of the determinantal symmetroid in the space of quadrics $\PP(\Sym^2{\CC^{5}}^{*})$. One of the main steps of the construction is to find an explicit birational relation between the double symmetroid and the Hilbert scheme of conics in the Grassmannian $\Gr(3,5) \cong \Gr(2,5)$ of planes. This relation has been established in \cite{HT15} in a broader setting, namely, for the space of quadrics in $\PP(\Sym^{2}{\CC^{n+1}}^{*})$ and the Hilbert scheme of conics in $\Gr(n-1, n+1) \cong \Gr(2, n+1)$. 

\subsection{Main results}

The main result of this paper is the completion of the projective birational geometry of the space of conics in $\Gr(n-1, n+1)$ in the viewpoint of \emph{Mori's program}. Mori's program, or the log minimal model program for a projective moduli space $M$ aims the classification of all rational contractions of $M$. If $M$ is a Mori dream space, then for each effective divisor $D$, one can construct a projective model 
\[
	M(D) := \proj \bigoplus_{m \ge 0}\rH^{0}(M, \cO(mD))
\]
and a rational contraction $M \dashrightarrow M(D)$. Provided by being a Mori dream space, there are only finitely many projective models. 

For the moduli space $\overline{\rM}_{0,0}(X, d)$ of stable maps, which is a compactification of rational curves in a projective variety $X$, Mori's program has been studied in \cite{Che08b, CC11}. When $X = \Gr(k, V)$, the Grassmannian of subspaces in $V$ and $d = 2, 3$, the stable base locus decomposition was obtained by Chen and Coskun in \cite{CC10}, as a first step toward Mori's program. 

In this paper, we complete Mori's program for $\overline{\rM}_{0,0}(\Gr(n-1, V), 2)$. Furthermore, we describe all birational models in terms of moduli spaces, incidence varieties, and partial desingularizations (\cite{Kir85}). 

Let $V$ be a vector space of dimension $n + 1 \ge 5$. For the precise definition of the divisors in the statement below, see Definition \ref{def:divisorclasses}.

\begin{theorem}\label{thm:mainintro}
For an effective divisor $D$ on $\bM := \MzzGrt$,  
\begin{enumerate}
\item If $D = aH_{\sigma_{1, 1}} + bH_{\sigma_{2}} + cT$ for $a, b, c > 0$, then $\bM(D) \cong \bM$. 
\item If $D = aH_{\sigma_{1, 1}} + bH_{\sigma_{2}}$ for $a, b > 0$, then $\bM(D) \cong \bC := \mathrm{Chow}_{1, d}(\Gr(n-1, V))^{\nu}$, the normalization of the main component of Chow variety. 
\item If $D = aH_{\sigma_{1, 1}} + bH_{\sigma_{2}} + cP$ for $a, b, c > 0$, then $\bM(D) \cong \bH := \Hilb^{2m+1}(\Gr(n-1, V))$. 
\item If $D = aT + b\Delta$ for $a > 0$ and $b \ge 0$, then $\bM(D) \cong \bU := \overline{\rU}_{0,0}(\Gr(n-1, V), 2)$, the normalization of the closure of the image of $\bM$ in $\PP(\wedge^{n-1}V^{*}\otimes \mathfrak{sl}_{2})\git \SL_{2}$.
\item If $D = aH_{\sigma_{2}} + bD_{\deg} + c\Delta$ for $a > 0$ and $b, c \ge 0$, then $\bM(D) \cong \bK := \PP(V^{*}\otimes \mathfrak{gl}_{2})\git \SL_{2} \times \SL_{2}$, which is isormorphic to a connected component of the moduli space $\rM_{\PP V}(v)$ of semistable sheaves with $v = 2\mathrm{ch}(\cO_{\PP^{n-1}})$ (Definition \ref{def:quasimap}). 
\item If $D = aH_{\sigma_{2}} + bT + c\Delta$ for $a, b > 0$ and $c \ge 0$, then $\bM(D) \cong \bX^{1}\git \SL_{2} \times \SL_{2}$, which is the first step of the partial desingularization of $\bK$ (Section \ref{ssec:partialdesing}). 
\item If $D = aH_{\sigma_{2}} + bP + cD_{\deg}$ for $a , b > 0$ and $c \ge 0$, then $\bM(D) = \widetilde{\bG}$ (Section \ref{ssec:determinantalvarieties}). 
\item If $D = aD_{\mathrm{unb}} + bP + cD_{\deg}$ for $a , b > 0$ and $c \ge 0$, then $\bM(D) = \bG := \Gr(3, \wedge^{2}\cS)$ where $\cS$ is the universal subbundle over $\Gr(4, V^{*})$. 
\item If $D = aH_{\sigma_{1, 1}} + bP + cD_{\mathrm{unb}}$ for $a, b, c > 0$, then $\bM(D) \cong  \bB := \Bl_{\mathrm{OG}(3, \wedge^{2}\cS)_{\sigma_{(2)^{*}}}}\Gr(3, \wedge^{2}\cS)$ (Definition \ref{def:B}). 
\item If $D = aH_{\sigma_{1, 1}} + bD_{\mathrm{unb}} + c\Delta$ for $a, b > 0$ and $c \ge 0$, then $\bM(D) \cong \bK_{\cS} := \PP(\cS^{*} \otimes \mathfrak{gl}_{2})\git \SL_{2} \times \SL_{2} \cong \rM_{\PP \cS}(m^{2}+3m+2)$, the relative moduli space of semistable sheaves (Definition \ref{def:relativesheaves}). 
\item If $D = aH_{\sigma_{1, 1}} + bT + c\Delta$ for $a, b > 0$ and $c \ge 0$, then $\bM(D)$ is the normalization $\bR$ of the incidence variety in $\rM_{\PP V^{*}}(m^{2}+3m+2) \times \bU$.
\item If $D = aH_{\sigma_{1,1}} + b\Delta$ for $a > 0$ and $b \ge 0$, then $\bM(D) \cong \bL$, the normalization of the closure of the locus of sheaves supported on a smooth quadric surfaces in $\rM_{\PP V^{*}}(m^{2}+3m+2)$ (Definition \ref{def:bL}). 
\item If $D = aP + bD_{\deg}$ for $a > 0$ and $b \ge 0$, then $\bM(D) \cong \overline{\bG}$, which is the normalization of the image of the envelope map $env : \bH \to \Gr(3, \wedge^{2}V^{*})$ (Definition \ref{def:envelope}). 
\item If $D = aH_{\sigma_{1,1}} + bP$ for $a, b > 0$, then $\bM(D)$ is the blow-up $\widehat{\bG}$ of $\bG$ along a subvariety isomorphic to $\mathrm{OG}(3, \wedge^{2}\cS)$. 
\item If $D = a\Delta + bD_{\deg}$ for $a, b \ge 0$, then $\bM(D)$ is a point. 
\item If $D = aD_{\mathrm{unb}} + b\Delta$ for $a > 0$ and $b \ge 0$, or $D = aD_{\mathrm{unb}} + bD_{\deg}$ for $a > 0$ and $b \ge 0$, then $\bM(D)$ is $\Gr(4, V^{*}) \cong \Gr(n-3, V)$.
\end{enumerate}
\end{theorem}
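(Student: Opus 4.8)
The plan is to realise the model as the base of the \emph{span} fibration and to reduce the section ring to a fibrewise rigidity statement. First I would compose the contraction $h\colon\bM\to\bG=\Gr(3,\wedge^{2}\cS)$ of statement (8) with the structure projection $\pi\colon\Gr(3,\wedge^{2}\cS)\to\Gr(4,V^{*})$ to obtain $f:=\pi\circ h\colon\bM\to\Gr(4,V^{*})$. Under the duality $\Gr(n-1,V)\cong\Gr(2,V^{*})$ the morphism $f$ sends a conic to the $4$-dimensional subspace $U\subseteq V^{*}$ spanned by its lines, equivalently to the $\PP^{3}\subseteq\PP(V^{*})$ swept out by the associated ruled surface; the factorisation through $h$ guarantees that $f$ is everywhere defined. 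Because $h$ is birational onto the smooth variety $\bG$ and $\pi$ is a Grassmannian bundle, $f$ has connected fibres, so $f_{*}\cO_{\bM}=\cO_{\Gr(4,V^{*})}$, and a general fibre $F$ is the $n=3$ moduli space $\overline{\rM}_{0,0}(\Gr(2,U^{*}),2)$ of conics contained in the sub-Grassmannian $\Gr(2,U^{*})\subseteq\Gr(2,V^{*})$.

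Next I would establish the key numerical identity
\[
	D_{\mathrm{unb}}=f^{*}\cO_{\Gr(4,V^{*})}(1)+E,
\]
where $E$ is an effective divisor whose support is the unbalanced locus (the conics that $h$ sweeps into the planes lying on the Plücker quadric). This immediately gives $\kappa(\bM,aD_{\mathrm{unb}}+b\Delta)\ge\dim\Gr(4,V^{*})$, since $aD_{\mathrm{unb}}-af^{*}\cO(1)=aE$ is effective. For the reverse inequality I would prove that the restriction $(aD_{\mathrm{unb}}+b\Delta)|_{F}=(aE+b\Delta)|_{F}$ is \emph{rigid}, i.e.\ of Iitaka dimension $0$, on a general fibre; by subadditivity of the Iitaka dimension this forces $\kappa(\bM,aD_{\mathrm{unb}}+b\Delta)=\dim\Gr(4,V^{*})$ with Iitaka fibration $f$. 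The rigidity makes $f_{*}\cO_{\bM}(m(aD_{\mathrm{unb}}+b\Delta))$ a rank-one sheaf, reflexive hence invertible on the smooth $\Gr(4,V^{*})$, say $\cO(c_{m})$ with $c_{m}>0$ for $m\gg 0$, so that
\[
	\bM(D)=\proj\bigoplus_{m\ge 0}\rH^{0}\bigl(\Gr(4,V^{*}),\cO(c_{m})\bigr)\cong\Gr(4,V^{*})\cong\Gr(n-3,V),
\]
the last isomorphism being annihilator duality. The family $aD_{\mathrm{unb}}+bD_{\deg}$ is handled by the identical argument, with $D_{\deg}|_{F}$ in place of $\Delta|_{F}$.

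The main obstacle is precisely the \emph{fibrewise rigidity} of $aE|_{F}+b\Delta|_{F}$ (and of $aE|_{F}+bD_{\deg}|_{F}$) on $F\cong\overline{\rM}_{0,0}(\Gr(2,4),2)$, the one point where the standing hypothesis $n+1\ge 5$ is unavailable and the $n=3$ geometry must be analysed directly. I expect to prove it by identifying the restricted classes with the unbalanced, boundary, and degeneracy divisors of the $n=3$ moduli space and locating them on an extremal face of its effective cone; this is the fibrewise shadow of statement (14), which already shows that $a\Delta+bD_{\deg}$ is rigid on $\bM$. Concretely, the Plücker-span map $h|_{F}\colon F\to\Gr(3,\wedge^{2}U)$ contracts exactly the support of $(aE+b\Delta)|_{F}$, so no nonconstant sections of its multiples survive and $(aE+b\Delta)|_{F}$ is a fixed, non-moving effective divisor. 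Once this rigidity and the identity $D_{\mathrm{unb}}=f^{*}\cO(1)+E$ are in place, the projection formula together with the Picard rank one of the Grassmannian target makes the remaining computation formal.
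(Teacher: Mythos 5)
Your proposal addresses only Item (16) of the sixteen-item theorem, and even there it takes Items (8) and (14)/(15) --- which are themselves part of the statement being proved --- as given inputs; Items (1)--(13) and (15) receive no argument at all. So even if the argument for Item (16) were sound, it would not be a proof of the theorem. (As a side remark, the rigidity of $a\Delta+bD_{\deg}$ that you invoke is Item (15), not Item (14).)

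Within Item (16) there is moreover a genuine gap at the very first step. You build $f=\pi\circ h\colon\bM\to\Gr(4,V^{*})$ and assert that ``the factorisation through $h$ guarantees that $f$ is everywhere defined,'' but the contraction $h\colon\bM\dashrightarrow\bG$ of Item (8) is \emph{not} a morphism: $\bG$ lies in a chamber of the decomposition in Figure \ref{fig:stablebaselocusdecomposition} separated from the ample chamber of $\bM$ by flipping walls (already the wall spanned by $H_{\sigma_{1,1}},H_{\sigma_{2}}$ between $\bM$ and $\bH$ is a flip over $\bC$, and in the paper the regular maps to $\bG$ come from $\bB$ and $\bH_{\cS}$, not from $\bM$ or $\bH$). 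Equivalently, the span map $a\colon\bM\dashrightarrow\Gr(4,V^{*})$ is genuinely rational: for a stable map whose image lies in a Schubert plane the swept surface spans only a $\PP^{2}$, and the $4$-dimensional subspace containing it is not unique (this is exactly why $r_{M}\colon\bM_{\cS}\to\bM$ has $\PP^{n-3}$-fibres over that locus), so no regular extension of $a$ exists. Without a regular $f$, the entire fibration apparatus --- connected fibres, $f_{*}\cO_{\bM}=\cO$, restriction of $D$ to a fibre $F\cong\overline{\rM}_{0,0}(\Gr(2,4),2)$, easy addition of Iitaka dimensions --- has no foundation. Note also that your identity $D_{\mathrm{unb}}=f^{*}\cO_{\Gr(4,V^{*})}(1)+E$ with $E\neq 0$ is incoherent on $\bM$ itself: the indeterminacy locus of $a$ has codimension at least two, so if $f$ existed it would agree with $a$ in codimension one and Definition \ref{def:divisorclasses} would force $E=0$; a nonzero exceptional $E$ only appears after resolving $a$, where the fibres are no longer the $n=3$ moduli space and your rigidity claim would have to be re-proved from scratch. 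The paper's own argument avoids all of this: by Definition \ref{def:divisorclasses}, $D_{\mathrm{unb}}$ \emph{is} the pullback class $a^{*}\cO_{\Gr(4,V^{*})}(1)$, so $\bM(D_{\mathrm{unb}})=\Gr(4,V^{*})$ immediately, and since $\Delta$ and $D_{\deg}$ lie in the exceptional locus of the rational contraction $a$, the Mori dream space property $\bM(D)\cong\bM(D+E)$ for $E$ exceptional gives $\bM(aD_{\mathrm{unb}}+b\Delta)\cong\bM(aD_{\mathrm{unb}}+bD_{\deg})\cong\Gr(4,V^{*})$ with no Iitaka-theoretic input.
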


When $n = 3$, the description of birational models is simpler because of the self-dual map on $\Gr(2, 4)$. See Theorem \ref{thm:n=3case} for the statement. 

Note that there are only few examples of completed Mori's program when the complexity of the moduli space is large. Except toric varieties and moduli spaces with Picard number two (for instance \cite{Che08b, Moo14b}), the completed examples are very rare (see \cite{MY16} for such an example). Theorem \ref{thm:mainintro} provides one additional example with Picard number three. 

\subsection{Application to the motivic invariants}
 
Let us finish this section by mentioning some related works. One of birational models of $\bM$ turns out to be the moduli space of quiver representations with dimension vector $(2, 2)$ and $n+1$ arrows between them (Item (5) of Theorem \ref{thm:mainintro}). We call the moduli space of such quiver representations by the moduli space of Kronecker modules of type $(n+1;2,2)$, or simply, the \emph{Kronecker moduli space}. The Kronecker moduli space has been studied in the context of curve counting invariants (In particular, GW/Kronecker correspondence). For the detail, see \cite{Sto11}. 

The Kronecker moduli space of type $(6;2,2)$ is birational to the moduli space of semistable sheaves on $\PP^{2}$ with Hilbert polynomial $4m+2$. The birational map can be explicitly described in terms of Bridgeland wall-crossing (\cite[Section 6]{BMW14}). Combining with our analysis, we obtain the virtual Poincar\'e polynomial of $\rM_{\PP^{2}}(4m+2)$ from that of $\bM$. For a detail, see Section \ref{sec:application}.
 
\subsection{Structure of paper}

This paper is organized as the following. In Section \ref{sec:stablebaselocus}, for the reader's convenience, we recall the stable base locus decomposition of $\bM$ obtained by Chen and Coskun. Section \ref{sec:birationalmodels} introduces many birational models obtained in \cite{CC10, HT15}. In Section \ref{sec:Kronecker}, we study geometric properties of the moduli space $\bK$ of Kronecker modules, which is a key ingredient on the moduli theoretic interpretation of biratoinal models. In Section \ref{sec:pardesing} we show that the partial desingularization of $\bK$ is indeed $\bM$ and investigate the geometry of the birational contraction. After introducing some more natural models, in Section \ref{sec:Moriprogram} we prove Theorem \ref{thm:mainintro}. Finally, in the last section we compute topological invariants of some moduli spaces. 

\subsection{Notation}
 
We work on $\CC$. A projective space $\PP V$ is the space of one-dimensional \emph{subspaces} of $V$. For a partition $\lambda$, let $\Sigma_{\lambda}$ be an associated Schubert variety in $\Gr(k, V)$. Its Poincar\'e dual is denoted by $\sigma_{\lambda}$. For a partition $\lambda$, $\lambda^{*}$ is the dual partition. For a direct sum of sheaves, we will use additive notation. For instance, $2\cO_{X}$ means $\cO_{X}^{\oplus2}$. 



\section{Stable base locus decomposition}\label{sec:stablebaselocus}

In this section, we fix an integer $n \ge 3$. Let $V$ be an ($n+1$)-dimensional vector space and let $k$ be an integer such that $2 \le k \le n-1$. 

When one runs Mori's program for a given moduli space $M$, the first step is the computation of the rank of Neron-Severi vector space $\rN^{1}(M)$ and the effective cone $\Eff(M)$. For the moduli space $\overline{\rM}_{0,0}(\Gr(k, V), 2)$, $\dim \rN^{1}(\overline{\rM}_{0,0}(\Gr(k, V), 2)) = 3$ (\cite[Theorem 1]{Opr05}). Its effective cone was computed by Coskun and Starr in \cite{CS06}. To describe the result, we need to introduce several effective divisor classes on $\overline{\rM}_{0,0}(\Gr(k, V), 2)$. 

\begin{definition}\label{def:divisorclasses}
\begin{enumerate}
\item Let $\Delta$ be the locus of stable maps with singular domains.
\item Fix an $n-1-k$-dimensional subspace $W$ of $V$. Let $D_{\deg}$ be the locus of stable maps $f$ such that the projection of the smallest linear subspace in $V$ generated by the image of $f$ onto $V/W$ is a proper subspace of $V/W$. If $k = n-1$, $D_{\deg}$ is the locus of stable maps whose image is in $\Gr(n-1, V')$ for some $n$-dimensional subspace $V' \subset V$. 
\item For a stable map $f : \PP^{1} \to \Gr(k, V)$, let $0 \to E \to \cO_{\PP^{1}} \otimes V$ be the induced subbundle of rank $k$ of degree $-2$.  If $k = 2$, let $D_{\mathrm{unb}}$ be the closure of the locus of stable maps such that $E \ne 2\cO_{\PP^{1}}(-1)$. When $k > 2$, for a general stable map $f$ and its associated subbundle $0 \to E \to \cO_{\PP^{1}} \otimes V$, there is a trivial subbundle $E' := (k-2)\cO_{\PP^{1}} \subset E$ which induces an ($k-2$)-dimensional sub vector space $V_{E'} \subset V$. Let $D_{\mathrm{unb}}$ be the closure of the locus of stable maps such that $V_{E'} \cap W \ne \{0\}$ for a fixed ($n+3-k$)-dimensional subspace $W \subset V$. In other words, $D_{\mathrm{unb}} = a^{*}(\cO_{\Gr(n+3-k, V)}(1))$ for $a : \overline{\rM}_{0,0}(\Gr(k, V), 2) \dashrightarrow \Gr(n+3-k, V)$. 
\item Let $H_{\sigma_{1,1}}$ (resp. $H_{\sigma_{2}}$) be the locus of stable maps whose image in $\Gr(k, V)$ intersects a fixed codimension two Schubert variety $\Sigma_{1,1}$ (resp. $\Sigma_{2}$). 
\item Let $T$ be the locus of stable maps whose image in $\Gr(k, V)$ is tangent to a fixed hyperplane $\Sigma_{1}$. 
\item If we compose a general stable map $f : C \to \Gr(k, V)$ with the Pl\"ucker embedding $\Gr(k, V) \hookrightarrow \PP(\wedge^{k}V)$, then we obtain a conic in $\PP(\wedge^{k}V)$, which spans a two-dimensional subspace in $\PP(\wedge^{k}V)$. Thus we obtain an element in $\Gr(3, \wedge^{k}V)$. Thus there is a rational map $p : \overline{\rM}_{0,0}(\Gr(k, V), 2) \dashrightarrow \Gr(3, \wedge^{k}V)$. Let $P := p^{*}\cO_{\Gr(3, \wedge^{k}V)}(1)$. 
\end{enumerate}
\end{definition}

\begin{theorem}[\protect{\cite[Section 2]{CS06}}]
The effective cone of $\overline{\rM}_{0,0}(\Gr(k, V), 2)$ is generated by $D_{\mathrm{unb}}$, $D_{\deg}$ and $\Delta$. In particular, the effective cone is simplicial. 
\end{theorem}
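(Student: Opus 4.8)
Write $M := \overline{\rM}_{0,0}(\Gr(k, V), 2)$. The three classes $D_{\mathrm{unb}}$, $D_{\deg}$, $\Delta$ are effective essentially by construction, since each is defined as the closure of a codimension-one locus of stable maps satisfying a geometric incidence or degeneration condition; thus the simplicial cone $\tau := \langle D_{\mathrm{unb}}, D_{\deg}, \Delta\rangle$ they span is contained in $\Eff(M)$. As $\dim \rN^{1}(M) = 3$ by \cite{Opr05}, it remains to prove the two complementary facts: that the three classes are linearly independent (so that $\tau$ is a genuine full-dimensional simplicial cone), and that every pseudoeffective class lies in $\tau$. The plan is to deduce both at once from a single intersection computation against well-chosen curves.

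The tool is the covering-curve criterion: if $B$ is the class of a curve that moves in an algebraic family dominating $M$, then a general member of that family is not contained in the support of any fixed irreducible effective divisor, so $D \cdot B \ge 0$ for every effective $D$, and hence for every pseudoeffective class. Consequently, if I can exhibit three covering curve classes $B_{1}, B_{2}, B_{3}$ for which the closed cone $\{ D \in \rN^{1}(M) : D\cdot B_{i}\ge 0,\ i = 1,2,3\}$ equals $\tau$, I am done. Because $\tau$ is simplicial, this reduces to arranging the $3\times 3$ intersection matrix $(D_{j}\cdot B_{i})$ to be nonsingular with the sign pattern in which each generator $D_{j}$ lies on two of the three hyperplanes $B_{i}^{\perp}$; concretely I will aim for a matrix that is diagonal with positive diagonal entries (after reindexing). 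Nonsingularity simultaneously furnishes the linear independence asserted in the first point.

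The three families are modeled on the three geometric phenomena measured by the generators. First, a pencil $B_{\Delta}$ of conics lying in a fixed $\PP^{2}$-section of $\Gr(k, V)$, chosen to keep the Pl\"ucker span and the associated subbundle $0 \to E \to \cO_{\PP^{1}}\otimes V$ of the generic member balanced, while degenerating to a reducible conic at finitely many members; this meets $\Delta$ positively and is designed to avoid $D_{\deg}$ and $D_{\mathrm{unb}}$. Second, a family $B_{\deg}$ of smooth conics with balanced bundle whose projected linear span to $V/W$ drops at isolated members, so that it meets $D_{\deg}$ positively but not $\Delta$ or $D_{\mathrm{unb}}$. Third, a family $B_{\mathrm{unb}}$ of smooth conics with fixed generic span whose associated subbundle $E$ jumps from balanced to unbalanced at isolated members, detected through the tautological construction defining $D_{\mathrm{unb}}$. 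In each case the relevant intersection number is computed as the degree on the base $\PP^{1}$ of a tautological or degeneracy class, and one checks that the other two divisors restrict trivially.

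The main obstacle is the bookkeeping for $D_{\mathrm{unb}}$ and $D_{\deg}$. Unlike $\Delta$, whose intersection with a pencil is a standard count of nodal fibers, these two divisors are defined through the varying subbundle $E$ and the varying linear span of the image, so evaluating $D_{\mathrm{unb}}\cdot B_{\mathrm{unb}}$ and $D_{\deg}\cdot B_{\deg}$ requires tracking how the jumping and degeneracy loci deform across the family and identifying the corresponding Chern class on the base, while at the same time verifying that each family really does dominate $M$ (so that the covering-curve inequality applies) and that nodal or degenerate members do not create spurious intersection with the wrong generator. Once the intersection matrix is pinned down and seen to be diagonal with positive entries, the dual cone is exactly $\tau$, which proves both the independence and the reverse containment $\Eff(M)\subseteq \tau$, and hence the stated simpliciality.
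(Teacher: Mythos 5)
This theorem is not proved in the paper at all: it is imported verbatim from \cite[Section 2]{CS06}, so the only thing to compare your attempt with is the cited source. Your overall strategy --- bound $\Eff(\bM)$ from above by covering (moving) curves, and note that with exactly three such curves the intersection matrix against $(D_{\mathrm{unb}}, D_{\deg}, \Delta)$ is forced to be diagonal with positive entries, i.e.\ the curves must form the dual basis --- is a legitimate and standard route, and the logical skeleton $\tau \subseteq \Eff \subseteq \{D : D\cdot B_{i}\ge 0\} = \tau$ is sound. The problem is in the curves you actually propose.

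The concrete error is your family $B_{\Delta}$. A pencil of conics inside a fixed $\PP^{2}$ can only exist if that $\PP^{2}$ is contained in $\Gr(k,V)$: since the Grassmannian is cut out by quadrics, a plane not contained in it meets it in at most one conic. The planes contained in $\Gr(k,V)$ are exactly the Schubert planes $\Sigma_{(1,1)^{*}}$ and $\Sigma_{(2)^{*}}$, and every conic in such a plane is special: in one type all the parametrized subspaces span too small a subspace of $V$ (so the conic lies in $D_{\deg}$), and in the other type the restricted subbundle is forced to be $\cO \oplus \cO(-2)$ (so the conic lies in $D_{\mathrm{unb}}$). Hence your pencil lies \emph{entirely inside} one of the two generators you claim it avoids; it is precisely the class $C_{1}$ or $C_{2}$ of Definition \ref{def:curveclasses}, whose deformations sweep out $D_{\deg}$ (resp.\ $D_{\mathrm{unb}}$) and which pair \emph{negatively} with that divisor (they are contracted by divisorial contractions having $D_{\deg}$, resp.\ $D_{\mathrm{unb}}$, as exceptional divisor, cf.\ Section \ref{ssec:determinantalvarieties} and \cite{CC10}). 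In particular this family does not dominate $\bM$, so the covering-curve inequality cannot be applied to it; these pencils are the right tool for the \emph{opposite} task (certifying extremality of $D_{\deg}$ and $D_{\mathrm{unb}}$), not for cutting $\Eff$ from outside. The correct dual-basis families come instead from rulings of pencils of quadric surfaces in a fixed $\PP^{3}$: a pencil spanned by two rank-two quadrics gives $B\cdot\Delta>0$, $B\cdot D_{\deg}=B\cdot D_{\mathrm{unb}}=0$; a generic pencil (whose only singular members are cones) gives the class dual to $D_{\mathrm{unb}}$; and a pencil degenerating to a double plane with no cone members gives the class dual to $D_{\deg}$. Even granting this repair, your write-up defers exactly the substantive verifications (domination of $\bM$, stable limits, monodromy of rulings, general position of the auxiliary subspace $W$ entering the definitions of $D_{\deg}$ and $D_{\mathrm{unb}}$, and the actual intersection numbers), so none of the three required computations is carried out. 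As it stands the proposal is a plausible plan with its one specified ingredient pointing in the wrong direction, not a proof.
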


The next step is the computation of the stable base locus decomposition, which is the first approximation of the Mori chamber decomposition of the effective cone. This was done by Chen and Coskun in \cite[Theorem 3.6]{CC10}. In particular, there are 8 open chambers as in Figure \ref{fig:stablebaselocusdecomposition}. Also the divisor classes of $T$, $D_{\deg}$, $D_{\mathrm{unb}}$, and $P$ are calculated in terms of $\Delta$, $H_{\sigma_{1,1}}$, and $H_{\sigma_{2}}$ in \cite[Section 4, 5]{CS06} and \cite[Lemma 3.4]{CC10}. 

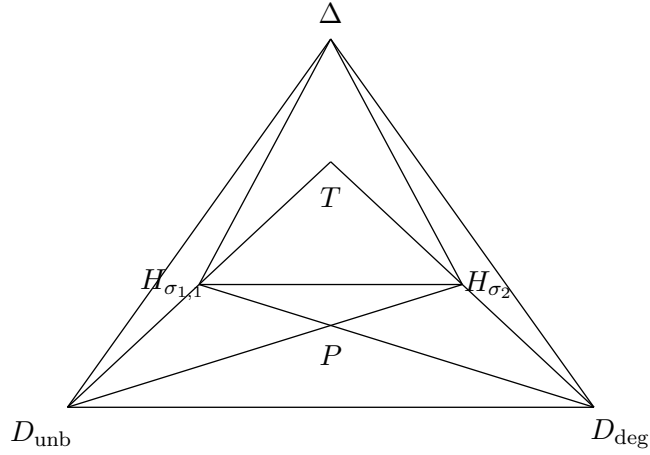
\begin{figure}[!ht]
\begin{tikzpicture}[scale=0.7]
\draw[line width = 0.5 pt] (0, 0) -- (10, 0);
\draw[line width = 0.5 pt] (0, 0) -- (5, 7);
\draw[line width = 0.5 pt] (5, 7) -- (10, 0);
\draw[line width = 0.5 pt] (0, 0) -- (5, 14/3);
\draw[line width = 0.5 pt] (10, 0) -- (5, 14/3);
\draw[line width = 0.5 pt] (2.5, 7/3) -- (7.5, 7/3);
\draw[line width = 0.5 pt] (2.5, 7/3) -- (10, 0);
\draw[line width = 0.5 pt] (7.5, 7/3) -- (0, 0);
\draw[line width = 0.5 pt] (2.5, 7/3) -- (5, 7);
\draw[line width = 0.5 pt] (7.5, 7/3) -- (5, 7);

\node (2) at (-0.5, -0.5) {$D_{\mathrm{unb}}$};
\node (3) at (10.5, -0.5) {$D_{\deg}$};
\node (4) at (5, 7.5) {$\Delta$};
\node (5) at (5, 4) {$T$};
\node (6) at (2, 7/3) {$H_{\sigma_{1, 1}}$};
\node (7) at (8, 7/3) {$H_{\sigma_{2}}$};
\node (8) at (5, 1) {$P$};
\end{tikzpicture}
\caption{Stable base locus decomposition of $\overline{\rM}_{0,0}(\Gr(k, V), 2)$}
\label{fig:stablebaselocusdecomposition}
\end{figure}

For the computation of the stable base locus decomposition, Chen and Coskun introduced many curve classes on $\overline{\rM}_{0,0}(\Gr(k, V), 2)$ in \cite[Section 3]{CC10}. Since these curves will have a prominent role in the proof of Theorem \ref{thm:mainintro}, for a reader's convenience, we leave the definition.

\begin{definition}\label{def:curveclasses}
\begin{enumerate}
\item Let $C_{1}$ (resp. $C_{2}$) be a general pencil of conics in a fixed plane of class $\Sigma_{(1,1)^{*}}$ (resp. $\Sigma_{(2)^{*}}$).
\item Let $C_{5}$ be a one-parameter family of conics in a fixed $\Sigma_{(1,1)^{*}}$ tangent to four general lines.
\item Let $C_{6}$ (resp. $C_{7}$) be a one-dimensional family of singular stable maps obtained by attaching a fixed line to the base point of a pencil of lines in a fixed $\Sigma_{(1,1)^{*}}$ (resp. $\Sigma_{(2)^{*}}$). 
\item Let $C_{8}$ be a one-parameter family of two-to-one covers of a fixed line. 
\end{enumerate}
\end{definition}

The intersection numbers of curve classes with divisors are summarized in \cite[Table 1]{CC10}.

\section{Some birational models}\label{sec:birationalmodels}

From this section, we focus on $k = n-1$ case, that is, $M = \bM = \MzzGrt$. 

After the computation of the stable base locus decomposition, the next step of Mori's program of $\bM$ is to determine a birational model $\bM(D)$ for each effective divisor $D$. Since $\bM$ is a Mori dream space (\cite[Corollary 1.2]{CC10}), there are finitely many birational models for each cone in the stable base locus decomposition. 

For $\bM$, there has been two prior results. A family of birational models is obtained in \cite{CC10} from the moduli theoretic viewpoint. On the other hand, by using multilinear algebra and incidence varieties, another family of birational models is obtained by Hosono and Takagi in \cite{HT15}. In this section, we review these birational models. 

\subsection{Moduli theoretic models}\label{ssec:modularmodels}

Since $\bM$ is a compactification of the moduli space of smooth conics in $\Gr(n-1, V)$, we obtain several natural birational models from different compactifications of moduli spaces of smooth conics. Here we review such birational models. 

\begin{definition}
\begin{enumerate}
\item Let $\bH := \Hilb^{2m+1}(\Gr(n-1, V))$ be Hilbert scheme of conics in $\Gr(n-1, V)$. 
\item Let $\bC := \mathrm{Chow}_{1, 2}(\Gr(n-1, V))^{\nu}$ be the normalization of the main component of Chow variety of dimension 1, degree 2 algebraic cycles in $\Gr(n-1, V)$. 
\end{enumerate}
\end{definition}

\begin{definition}\label{def:quasimap}
For a projective space $\PP W$, there is a divisorial contraction (\cite[Section 11]{Giv96}, \cite[Theorem 0.1]{Par07})
\[
	\overline{\rM}_{0,0}(\PP W, 2) \to 
	\PP(W^{*} \otimes \Sym^{d}\CC^{2}) \git \SL_{2}.
\]
This map is indeed the partial desingularization (\cite{Kir85}) when $d = 2$ (\cite[Theorem 4.1]{Kie07}). Let $\bU := \overline{\rU}_{0,0}(\Gr(n-1, V), 2)$ be the normalization of the image of the composition
\[
	\bM \hookrightarrow \overline{\rM}_{0,0}(\PP(\wedge^{n-1}V), 2) 
	\to \PP(\wedge^{n-1}V^{*}\otimes \mathfrak{sl}_{2}) \git \SL_{2}.
\]
\end{definition}

It is well-known that $\bH$ is smooth (\cite[Proposition 3.6]{CHK12}), and there is a diagram
\[
	\xymatrix{\bM \ar[d]\ar[rd] \ar@{<-->}[rr] && \bH\ar[ld]\\
	\bU & \bC.}
\]

\begin{definition}\label{def:envelope}
For each conic $C \in \bH = \Hilb^{2m+1}(\Gr(n-1, V)) \subset \Hilb^{2m+1}(\PP(\wedge^{n-1}V))$, the smallest linear subspace $\langle C \rangle$ of $\PP(\wedge^{n-1}V)$ containing $C$, the so-called \emph{linear envelope} of $C$, is $\PP^{2}$. Thus we have a regular morphism
\begin{equation}\label{eqn:envelope}
\begin{split}
	env : \bH \to&\; \Gr(3, \wedge^{n-1}V)\\
	C \mapsto &\; \langle C\rangle.
\end{split}
\end{equation}
\end{definition}

For any conic $C$, $\langle C \rangle \cap \Gr(n-1, V) \subset \PP(\wedge^{n-1}V)$ is either $C$ or $\langle C\rangle \cong \PP^{2}$. The second case happens precisely when $\langle C\rangle \subset \Gr(n-1, V)$. The moduli space of planes in $\Gr(n-1, V)$ has two connected components. One is the moduli space of Schubert planes $\Sigma_{(1,1)^{*}}$, which is isomorphic to a partial flag variety $\mathrm{Fl}(n-3, n, V)$. The other component is the moduli space of Schubert planes $\Sigma_{(2)^{*}}$, which is isomorphic to $\Gr(n-2, V)$. 

When $n = 3$, $env$ is a birational morphism. Indeed, $env$ is the blow-up along two disjoint orthogonal Grassmannians $\mathrm{OG}(3, \wedge^{2}V)$ which parametrize $\sigma_{(1,1)^{*}}$ (resp. $\sigma_{(2)^{*}}$) planes in $\Gr(2, V)$ (\cite[Lemma 3.9]{CC10}). We denote them by $\mathrm{OG}(3, \wedge^{2}V)_{\sigma_{(1,1)^{*}}}$ (resp. $\mathrm{OG}(3, \wedge^{2}V)_{\sigma_{(2)^{*}}}$) whenever we want to distinguish them. 

In summary, 
\begin{equation}\label{eqn:HilbblowupGr}
	\Hilb^{2m+1}(\Gr(2, V)) \cong 
	\mathrm{Bl}_{2\mathrm{OG}(3, \wedge^{2}V)}\Gr(3, \wedge^{2}V).
\end{equation}

\subsection{Models from birational geometry of determinantal varieties}\label{ssec:determinantalvarieties}

With a motivation in the context of homological projective duality, Hosono and Takagi studied birational geometry of $T_{r}$, which is a double cover of the space $S_{r}$ of rank $\le r$ quadric hypersurfaces in $\PP V$ (\cite{HT15}). More precisely, for $1 \le r \le n+1$, let $S_{r} \subset \PP (\Sym^{2}V^{*})$ be the locus that the associated quadratic form has rank $\le r$. When $r$ is even, there is a double cover $T_{r}$ of $S_{r}$ ramified along $S_{r-1}$ (\cite[Proposition 2.3]{HT15}). 

Set theoretically, $T_{4}\setminus S_{3}$, which is an \'etale double cover of $S_{4}\setminus S_{3}$, parametrizes pairs $(Q, P)$ where $Q$ is a rank 4 quadric hypersurface and $P$ is a pencil of $\PP^{n-2}$ in $Q$. Furthermore, they show that when $n \ge 3$, $T_{4}$ is birational to $\bH$ and there is a contraction diagram:
\[
	\xymatrix{&& \bH \ar[d]\\
	\bG \ar[rd] \ar@{<-->}[rr] && 
	\widetilde{\bG} \ar[ld] \ar[rd]\\
	& \overline{\bG} && T_{4},}
\]
where
\begin{enumerate}
\item $\bG := \Gr(3, \wedge^{2}\cS)$ is the Grassmannian bundle where $\cS$ is the rank 4 universal subbundle over $\Gr(4, V^{*})$;
\item $\overline{\bG}$ is the normalization of the image of $env : \bH \to \Gr(3, \wedge^{n-1}V)$ (see Definition \ref{def:envelope});
\item $\widetilde{\bG}$ is the $D$-flip of $\bG$ over $\overline{\bG}$;
\item $\widetilde{\bG}$ is a divisorial contraction of $\bH$, which contracts the curve class $C_{1}$, and hence $D_{\deg}$.
\end{enumerate}

Furthermore, the following properties are studied in \cite[Proposition 4.22, 4.11, and 4.5]{HT15}.
\begin{enumerate}
\item The map $\widetilde{\bG} \to T_{4}$ is a divisorial contraction which contracts the image of $\Delta$;
\item The normalization $\overline{\bG} \to \im\; env$ is bijective;
\item If $n = 3$, then $\bG \cong \overline{\bG} \cong \Gr(3, \wedge^{2}V)$. If $n > 3$, then $\bG \to \overline{\bG}$ is a small contraction whose exceptional fibers are $\PP^{n-3}$;
\item The map $\widetilde{\bG} \to \overline{\bG}$ is a contraction of the image of the locus of conics in a $\Sigma_{(2)^{*}}$-plane. So for an exceptional point, its fiber is $\PP^{5}$. If $n = 3$, it is a blow-up, but if $n > 3$, this is a small contraction. 
\end{enumerate}


\section{Moduli space of Kronecker modules}\label{sec:Kronecker}

In this paper, the moduli space of Kronecker modules has a central role to connect moduli spaces of sheaves and that of rational curves. In this section, we review its definition and basic properties. 

\subsection{Definitions and GIT stability}\label{ssec:GITstability}

Fix two positive integers $a, b$ and let $W$ be a vector space. A \emph{Kronecker $W$-module} is a quiver representation of an $n$-Kronecker quiver 
\[
	\xymatrix{\bullet\ar@/^1.0pc/[rr] \ar@/_1.0pc/[rr]\ar[rr]
	&{\vdots}&\bullet\\}
\]
with a dimension vector $(a, b)$. Two Kronecker $W$-modules $\phi = (\phi_{i})$ and $\psi = (\phi_{i})$ are equivalent if there are $A \in \SL_{a}$ and $B \in \SL_{b}$ such that $\phi = B \circ \psi \circ A$. We may regard the GIT quotient 
\[
	\PP\Hom(W\otimes \CC^{a}, \CC^{b})\git \SL_{a} \times \SL_{b}
\]
as a moduli space of \emph{semistable} Kronecker $W$-modules. The GIT stability was obtained by Drezet (\cite[Proposition 15]{Dre87}). 

\begin{theorem} 
A closed point $M \in \PP \Hom(W \otimes \CC^{a}, \CC^{b})$ is (semi)stable with respect to $\SL_{a} \times \SL_{b}$-action if and only if for every nonzero proper subspace $V_{1} \subset \CC^{a}$ and $V_{2} \subset \CC^{b}$ such that $M(W \otimes V_{1}) \subset V_{2}$, 
\[
	\frac{\dim V_{2}}{\dim V_{1}} \;(\ge) > \frac{b}{a}.
\]
\end{theorem}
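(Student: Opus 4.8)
The plan is to prove Drezet's criterion by a direct application of the Hilbert--Mumford numerical criterion to the linear $\SL_{a}\times\SL_{b}$-action on $\PP\Hom(W\otimes\CC^{a},\CC^{b})$, linearized on $\cO(1)$, and then to translate the resulting weight inequalities into the stated slope inequalities for subrepresentations.

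First I would set up the weight bookkeeping. After conjugation, any nontrivial one-parameter subgroup $\lambda$ of $\SL_{a}\times\SL_{b}$ is diagonal: there are bases $(e_{i})$ of $\CC^{a}$ and $(f_{j})$ of $\CC^{b}$ and integers $r_{1},\dots,r_{a}$, $s_{1},\dots,s_{b}$ with $\sum_{i}r_{i}=\sum_{j}s_{j}=0$ such that $\lambda(t)$ acts by $e_{i}\mapsto t^{r_{i}}e_{i}$ and $f_{j}\mapsto t^{s_{j}}f_{j}$. Since $W$ is acted on trivially, the coordinate $M_{ij}^{w}$ lies in the $\lambda$-weight space of weight $s_{j}-r_{i}$, so that
\[
\mu(M,\lambda)=-\min\bigl\{\,s_{j}-r_{i}\ :\ M_{ij}^{w}\neq 0\ \text{for some }w\,\bigr\}.
\]
The criterion then reads: $M$ is semistable iff $\mu(M,\lambda)\ge 0$ for every $\lambda$, i.e. $\min\{s_{j}-r_{i}:M_{ij}^{w}\neq 0\}\le 0$; and $M$ is stable iff this minimum is $<0$ for every nontrivial $\lambda$. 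The whole proof is the passage between these weight inequalities, ranging over all $\lambda$, and the slope inequalities $a\dim V_{2}\ (\ge)>b\dim V_{1}$, ranging over all subrepresentations, i.e. pairs $(V_{1},V_{2})$ with $M(W\otimes V_{1})\subset V_{2}$.

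Second, for the implication that a subrepresentation violating the slope inequality forces instability, I would write down an explicit destabilizing $\lambda$. Given such a pair with $\dim V_{1}=d_{1}$, $\dim V_{2}=d_{2}$ (both nonzero proper) and $a d_{2}\le b d_{1}$, take the two-step $\lambda$ giving $V_{1}$ weight $(a-d_{1})p/d_{1}$ and its complement weight $-p$ (so $\sum r_{i}=0$), and giving $V_{2}$ weight $u$ and its complement weight $-d_{2}u/(b-d_{2})$ (so $\sum s_{j}=0$). Because $M(W\otimes V_{1})\subset V_{2}$ kills the block $V_{1}\to\CC^{b}/V_{2}$, only three blocks survive, with weights $u-\tfrac{(a-d_{1})p}{d_{1}}$, $u+p$, and $-\tfrac{d_{2}u}{b-d_{2}}+p$. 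A short computation shows that all three are $\ge 0$ for an admissible choice of $u$ precisely when $a d_{2}\le b d_{1}$; taking $u$ in the interior of the admissible interval when $a d_{2}<b d_{1}$ makes all three strictly positive, giving $\mu(M,\lambda)<0$ and hence non-semistability, while in the borderline case $a d_{2}=b d_{1}$ the minimum is $0$, giving $\mu(M,\lambda)\le 0$ and hence non-stability.

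For the converse I would recover a subrepresentation from a destabilizing $\lambda$ by thresholding. Setting $V_{1}(c)=\langle e_{i}:r_{i}\ge c\rangle$ and $V_{2}(c)=\langle f_{j}:s_{j}\ge c\rangle$, the inequality $s_{j}-r_{i}\ge 0$ on nonzero blocks gives $M(W\otimes V_{1}(c))\subset V_{2}(c)$, so each $c$ produces a subrepresentation; writing $d_{1}(c),d_{2}(c)$ for the dimensions, the normalization $\sum_{i}r_{i}=\sum_{j}s_{j}=0$ yields the averaging identity
\[
\int_{-R}^{R}\bigl(a\,d_{2}(c)-b\,d_{1}(c)\bigr)\,dc=0
\]
for $R$ larger than all $|r_{i}|,|s_{j}|$. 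Since the integrand is an integer step function of integral zero, it attains a value $\le 0$ at some threshold (for non-stability) and, when $\lambda$ destabilizes strictly, a value $<0$ (for non-semistability), producing the required violating pair. The main obstacle is exactly this last point: one must ensure the chosen threshold gives $V_{1}(c),V_{2}(c)$ both nonzero and proper (discarding the trivial thresholds where the integrand automatically vanishes), and one must show that strict destabilization forces the step function to be not identically zero so that it dips strictly below $0$. I expect to handle this by reducing at the outset to Kempf's maximally destabilizing (two-step) $\lambda$, for which the correspondence with a single pair $(V_{1},V_{2})$ is exact and the inequality $\mu<0$ matches the strict slope inequality; the reduction to two-step subgroups and the boundary bookkeeping is the part demanding the most care.
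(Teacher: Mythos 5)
The paper offers no proof of this theorem at all --- it is quoted from Drezet \cite{Dre87} --- so your argument has to stand on its own. Your Hilbert--Mumford bookkeeping and the forward implication are correct: the three surviving block weights are as you say, the admissible interval for $u$ is $\left[\frac{(a-d_{1})p}{d_{1}}, \frac{(b-d_{2})p}{d_{2}}\right]$, which is nonempty exactly when $ad_{2}\le bd_{1}$ and has nonempty interior exactly when $ad_{2}<bd_{1}$; apart from clearing denominators to make the weights integral, that half is complete.

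The converse --- precisely the step you flag as delicate --- has a genuine gap, and it is not one that passing to Kempf's optimal subgroup can repair. First, your claim that a strictly destabilizing $\lambda$ forces the step function $c\mapsto a\,d_{2}(c)-b\,d_{1}(c)$ to dip strictly below zero is false: for $a=b=2$, take $M$ with $M(W\otimes e_{1})=0$ and $0\neq M(W\otimes e_{2})\subset\langle f_{1}\rangle$, and $\lambda$ with $r=(1,-1)$, $s=(1,-1)$. This $\lambda$ destabilizes strictly (the only nonzero block has weight $s_{1}-r_{2}=2>0$), yet the threshold pairs are only $(\CC^{2},\CC^{2})$, $(\langle e_{1}\rangle,\langle f_{1}\rangle)$ and $(0,0)$, so your integrand vanishes identically. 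The defect is that $V_{2}(c)$ can strictly contain $\mathrm{span}\,M(W\otimes V_{1}(c))$; one must shrink $V_{2}$ to that span, and then the violation may occur only at the finitely many jump values of $c$, so the integral identity alone does not locate it. Second, and more fundamentally, no repair can produce a violating pair with $V_{1}$ and $V_{2}$ both nonzero and proper, because with that restriction the statement itself is false: for a matrix with a zero column (say $M(W\otimes e_{1})=0$ and $M(W\otimes e_{2})$ spanning $\CC^{2}$), which the paper itself declares unstable in the discussion before Lemma \ref{lem:stability} and in item (1) of that lemma, the only nonzero proper pairs $(V_{1},V_{2})$ with $M(W\otimes V_{1})\subset V_{2}$ are pairs of lines with ratio exactly $1=b/a$, so the restricted criterion would call this point semistable. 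The genuinely violating subrepresentations here are $(\langle e_{1}\rangle,0)$ and $(\CC^{2},\langle f_{1}\rangle)$. So the theorem must be read, as in Drezet and King, with $(V_{1},V_{2})$ ranging over \emph{all} pairs satisfying $M(W\otimes V_{1})\subset V_{2}$ other than $(0,0)$ and $(\CC^{a},\CC^{b})$, i.e.\ allowing $V_{2}=0$ and $V_{1}=\CC^{a}$. Under that reading your thresholding-plus-averaging argument, amended as above, does close --- admissibility is then automatic, since $a\,d_{2}<b\,d_{1}$ forces $V_{1}\neq 0$ and $V_{2}\neq\CC^{b}$ --- and the reduction to two-step subgroups becomes unnecessary.
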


From now, we restrict ourselves to a special case that $a = b = 2$. 

\begin{corollary}
A closed point $M \in \PP \Hom(W \otimes \CC^{2}, \CC^{2}) \cong \PP(W^{*}\otimes \mathfrak{gl}_{2})$ is (semi)stable with respect to $\SL_{2} \times \SL_{2}$ if and only if for every one-dimensional subspace $V_{1} \subset \CC^{2}$, $\dim \;\im \; M(W \otimes V_{1}) (\ge) > 1$. 
\end{corollary}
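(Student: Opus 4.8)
The plan is to read off the statement as the specialization of the preceding criterion to the case $a = b = 2$, where $\frac{b}{a} = 1$. Since the only nonzero proper subspaces of $\CC^{2}$ are lines, I would let $V_{1} \subset \CC^{2}$ range over one-dimensional subspaces. For a fixed line $V_{1}$, the admissible $V_{2}$ are precisely the subspaces of $\CC^{2}$ containing $\im M(W \otimes V_{1})$, and along this family $\frac{\dim V_{2}}{\dim V_{1}} = \dim V_{2}$ is minimized by the smallest choice $V_{2} = \im M(W \otimes V_{1})$. Because enlarging $V_{2}$ only raises the slope, the inequality $\frac{\dim V_{2}}{\dim V_{1}} \, (\ge) > 1$, demanded for every admissible $V_{2}$, is equivalent to its instance at this minimal target, namely $\dim \im M(W \otimes V_{1}) \, (\ge) > 1$; letting $V_{1}$ vary over all lines yields the asserted condition.

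The one point requiring care is the subrepresentation whose source is all of $\CC^{2}$ and whose target is a proper subspace $V_{2} \subsetneq \CC^{2}$: its slope is $\frac{\dim V_{2}}{2} < 1$, so it destabilizes exactly when $\im M \subsetneq \CC^{2}$. In the \emph{stable} case this is harmless, because $\dim \im M(W \otimes V_{1}) = 2$ for even a single line $V_{1}$ already forces $\im M = \CC^{2}$ (as $\im M \supseteq M(W \otimes V_{1})$), so no full-source subrepresentation can destabilize and the line condition alone suffices. In the \emph{semistable} case I would invoke the symmetry exchanging the two $\SL_{2}$-factors: passing to the transposed Kronecker module interchanges the source $\CC^{2}$ with the target $\CC^{2}$ and carries the full-source subrepresentations to line-type ones, so that the same computation applied to the transpose records the surjectivity $\im M = \CC^{2}$ on equal footing with the original line condition.

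The main obstacle is thus organizational rather than analytic: one must enumerate the possible dimension vectors $(\dim V_{1}, \dim V_{2})$ of a proper nonzero subrepresentation and confirm that each is governed by the line condition, either directly by minimizing over $V_{2}$ or after exchanging the two factors. I expect the stable criterion to follow immediately from this minimization, while for the semistable criterion the factor-exchanging symmetry reduces the full-source case to the line condition read on the target, whose content is precisely the surjectivity of $M$; once both source and target are accounted for, no further work is required.
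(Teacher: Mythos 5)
Your proposal is correct, and it follows the same basic route the paper intends (the corollary appears there with no proof, as a direct specialization of Drezet's criterion to $a=b=2$): restrict to lines $V_1$ and minimize the slope over admissible $V_2$ by taking $V_2 = \im\, M(W\otimes V_1)$, allowing $V_2=0$. The genuine difference is your treatment of the full-source subrepresentations $(\CC^{2}, V_2)$ with $V_2 \subsetneq \CC^{2}$, and here you are more careful than the paper's own statement. For \emph{stability}, your observation that a single line with two-dimensional image already forces $\im M = \CC^{2}$ disposes of this case, so the printed criterion is correct as stated. For \emph{semistability}, however, the spanning condition $\im M = \CC^{2}$ that you extract via the factor-exchange symmetry is genuinely additional information, not a consequence of the source-side line condition: the module $M = \left[\begin{smallmatrix} g & h\\ 0 & 0 \end{smallmatrix}\right]$ with $g, h \in W^{*}$ linearly independent satisfies $\dim \im\, M(W \otimes V_1) = 1$ for every line $V_1 \subset \CC^{2}$, yet it is unstable, since the one-parameter subgroup $t \mapsto (\mathrm{Id}, \mathrm{diag}(t, t^{-1}))$ scales any lift of $M$ to $0$ in $\Hom(W\otimes \CC^{2}, \CC^{2})$. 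So the corollary as literally printed (line condition only) is incomplete as a criterion for semistability, and your transposed line condition is exactly the missing clause. The paper itself confirms this: the matrix description immediately following the corollary (``semistable iff no zero row or column even after row/column operations'') and Lemma \ref{lem:stability}(1) both contain the row-side condition that your symmetry argument produces. In short, your proof establishes the two-sided criterion the paper actually uses in the sequel, and in doing so it corrects, rather than merely reproduces, the stated corollary.
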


From now, let $V$ be an ($n+1$)-dimensional vector space, as before. We may describe $M \in \bX := \PP\Hom(V\otimes \CC^{2}, \CC^{2}) = \PP(V^{*}\otimes \mathfrak{gl}_{2})$ as a $2 \times 2$ matrix of linear polynomials with $n+1$ variables $x_{0}, \cdots, x_{n}$. Then $M$ is semistable if and only if even after performing row/column operations, there is no zero row or column. $M$ is stable if and only if $M$ has no zero entry. In summary, we have the following result. 

\begin{lemma}\label{lem:stability}
Let $M \in \bX := \PP(V^{*}\otimes \mathfrak{gl}_{2})$. 
\begin{enumerate}
\item If $M$ is unstable, then $M$ is equivalent to 
\[
	\left[\begin{array}{cc}g&h\\0&0\end{array}\right]
	\mbox{ or }
	\left[\begin{array}{cc}g&0\\h&0\end{array}\right]
\]
for some $g, h \in V^{*}$. 
\item If $M$ is strictly semistable, then $M$ is equivalent to 
\[
	\left[\begin{array}{cc}g&k\\0&h\end{array}\right]
\]
for some $g, h \in V^{*}\setminus \{0\}$ and $k \in V^{*}$. 
\item If $M$ is strictly semistable and has a closed orbit in the semistable locus, then $k = 0$, so $M$ is equivalent to 
\[
	\left[\begin{array}{cc}g&0\\0&h\end{array}\right]
\]
for some $g, h \in V^{*}\setminus \{0\}$. If $g$ is proportional to $h$, then $\mathrm{Stab}\; M \cong \SL_{2} \ltimes \ZZ_{2}$. If $g$ is not proportional to $h$, then $\mathrm{Stab}\;M \cong \CC^{*} \ltimes \ZZ_{2}$.
\end{enumerate}
\end{lemma}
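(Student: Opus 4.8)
The plan is to run the Hilbert--Mumford numerical criterion for the $\SL_2 \times \SL_2$-action on $\bX = \PP(V^* \otimes \mathfrak{gl}_2)$, writing $M = (M_{ij})$ as a $2\times 2$ matrix of linear forms $M_{ij} \in V^*$, with the first $\SL_2$ acting on columns (the domain $\CC^2$) and the second on rows (the codomain $\CC^2$). Every one-parameter subgroup of $\SL_2 \times \SL_2$ is conjugate to a diagonal one, and conjugating a $1$-PS is the same as translating $M$ within its orbit; hence, after replacing $M$ by a suitable $g \cdot M$ (which leaves the equivalence class unchanged), it suffices to test the diagonal $1$-PS $\rho(t) = (\mathrm{diag}(t^p, t^{-p}),\ \mathrm{diag}(t^q, t^{-q}))$ with $(p, q) \neq (0,0)$. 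Under $\rho$ the entry $M_{ij}$ is scaled with exponent $\chi_{ij}$, and one computes
\[
	\chi_{11} = p + q, \quad \chi_{12} = q - p, \quad \chi_{21} = p - q, \quad \chi_{22} = -p - q,
\]
so the two diagonal exponents and the two off-diagonal exponents each form a $\pm$ pair, and $\mu(M, \rho) = -\min\{\chi_{ij} : M_{ij} \neq 0\}$; thus $M$ is semistable (resp.\ stable) iff $\mu(M,\rho) \ge 0$ (resp.\ $> 0$) for every nontrivial $\rho$.

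For part (1), instability produces a diagonal $\rho$ with $\mu(M,\rho) < 0$, i.e.\ every nonzero entry has strictly positive exponent. Because $\chi_{11} = -\chi_{22}$ and $\chi_{12} = -\chi_{21}$, whichever member of each pair is nonpositive forces its entry to vanish; a short case distinction on the signs of $p + q$ and $q - p$ then shows that an entire row or an entire column of $M$ is zero, and a Weyl-group (row/column) swap brings this to one of the two stated forms. For part (2), strict semistability gives a nontrivial diagonal $\rho$ with $\mu(M,\rho) = 0$, that is $\min\{\chi_{ij}: M_{ij}\neq0\} = 0$. If $q \neq \pm p$ then all four exponents are nonzero, so the minimum cannot be $0$; hence $q = p$ or $q = -p$, and in either case --- after at most a single row/column swap --- only the entries on one triangle survive, giving the upper-triangular form $\begin{bmatrix} g & k \\ 0 & h\end{bmatrix}$. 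Semistability forbids a zero row or column, which forces $g = M_{11} \neq 0$ and $h = M_{22} \neq 0$.

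For part (3), I start from the triangular representative and the witnessing $1$-PS, e.g.\ $\rho(t) = (\mathrm{diag}(t^{-1}, t),\ \mathrm{diag}(t, t^{-1}))$, under which $\chi_{12} > 0$ while $\chi_{11} = \chi_{22} = 0$. Then $\lim_{t \to 0} \rho(t)\cdot M = \mathrm{diag}(g, h)$ lies in $\overline{(\SL_2\times\SL_2)\cdot M}$ and is itself semistable (neither a row nor a column can be made to vanish, since $g, h \neq 0$). If the orbit of $M$ is closed in the semistable locus it must coincide with its closure there, so $\mathrm{diag}(g,h)$ lies in the orbit of $M$; thus $M$ is equivalent to $\mathrm{diag}(g,h)$ and we may take $k = 0$. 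The stabilizer is then extracted from $B\,\mathrm{diag}(g,h)\,A = \lambda\,\mathrm{diag}(g,h)$ with $(A, B) \in \SL_2 \times \SL_2$ and $\lambda \in \CC^*$: comparing the coefficients of the forms $g$ and $h$ forces $A$ and $B$ to be diagonal when $g \not\propto h$, yielding a copy of $\CC^*$ together with the central involution $(I, -I)$, whereas when $g \propto h$ the point $M$ is proportional to $g\cdot I$, the equation reads $B = \lambda A$, and all of the diagonal $\SL_2$ together with $(I, -I)$ survives; these recover the two claimed stabilizer groups.

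The main obstacle is the careful bookkeeping in the weight analysis of parts (1) and (2) --- in particular recognizing that $\mu(M,\rho) = 0$ can occur only for the degenerate $1$-PS with $q = \pm p$, which is exactly what pins down the triangular normal form --- together with, in part (3), the clean use of the uniqueness of the closed orbit inside a semistable orbit closure (for which one must first verify that the limit $\mathrm{diag}(g,h)$ is semistable). By contrast, the final stabilizer identification is routine linear algebra once the diagonal reduction has been made.
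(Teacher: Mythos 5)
Your proof is correct, but it takes a more self-contained route than the paper does. The paper never writes out a proof of this lemma: it quotes Drezet's GIT criterion for Kronecker modules (\cite[Proposition 15]{Dre87}), specializes it to dimension vector $(2,2)$ in the Corollary immediately preceding the lemma (semistability becomes the condition that $\dim \im\, M(V\otimes V_{1}) \ge 1$ for every line $V_{1}\subset \CC^{2}$), translates this into the statement that no row or column can be killed by row/column operations, and then records the lemma ``in summary.'' You instead rederive everything directly from the Hilbert--Mumford criterion: reduction to diagonal one-parameter subgroups by conjugation, the explicit weights $\chi_{ij}$, and a sign analysis that produces the normal forms. This buys self-containedness (you are in effect re-proving the relevant case of Drezet's theorem rather than citing it), and it makes completely explicit the degenerate cases $q=\pm p$ that pin down the triangular form in (2). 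It also supplies an actual argument for item (3) --- the 1-PS degeneration of the triangular matrix to $\mathrm{diag}(g,h)$, the check that this limit is semistable, and the use of closedness of the orbit --- which the paper leaves entirely implicit. What the paper's route buys in exchange is brevity and generality: Drezet's subrepresentation criterion applies to any dimension vector $(a,b)$ and keeps the quiver-theoretic meaning of (in)stability visible, which is how the paper later interprets the semistable objects as sheaves.

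Two cosmetic slips, neither affecting correctness. First, in the case $q=-p$ of part (2) your matrix comes out lower triangular, so reaching the stated upper-triangular form requires both a row swap and a column swap, not ``at most a single'' swap. Second, in part (3), with your stabilizer equation $B\,\mathrm{diag}(g,h)\,A=\lambda\,\mathrm{diag}(g,h)$ and $M$ proportional to $g\cdot I$, the condition is $B=\lambda A^{-1}$ rather than $B=\lambda A$; the resulting group $\{(A,\pm A^{-1}) : A \in \SL_{2}\}\cong \SL_{2}\times\ZZ_{2}$ is of course the same, matching the paper's $\SL_{2}\ltimes \ZZ_{2}$.
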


\begin{remark}
The description of stabilizers is different from that in \cite[Lemma 6.4]{CM16}, because in this paper we are taking the $\SL_{2} \times \SL_{2}$ quotient instead of the $\mathrm{PGL}_{2} \times \mathrm{PGL}_{2}$ quotient.
\end{remark}

\subsection{Moduli space of Kronecker modules as a moduli space of semistable sheaves}\label{ssec:Kroneckerasmoduliofsheaves}

The moduli spaces of Kronecker modules can be understood as moduli spaces of semistable sheaves. Some explicit examples can be found in \cite{Dre87} and \cite[Section 3]{LP93b}. In this section we investigate a generalization toward moduli spaces of sheaves on higher dimensional projective spaces. 

The following lemma is a direct generalization of \cite[Lemma 5.2]{CM16}.

\begin{lemma}\label{lem:stabilitycomparison}
Let $n \ge 2$. Let $F \in \mathsf{Coh}(\PP^{n})$ has a resolution 
\begin{equation}\label{eqn:resolution}
	0 \to 2\cO_{\PP^{n}}(-1) \stackrel{M}{\to} 2\cO_{\PP^{n}} \to F
	\to 0
\end{equation}
such that $M$ is a semistable Kronecker module. Then $F$ is isomorphic to either \begin{enumerate}
\item $F = I_{\PP^{n-2}, Q}(1)$ for some quadric hypersurface $Q$ of rank 3 or 4;
\item $F$ is an extension of $\cO_{H}$ by $\cO_{H'}$ for two hyperplanes $H, H'$.
\end{enumerate}
In particular, $F$ is semistable. Furthermore, in the case of (1), $F$ is stable. 
\end{lemma}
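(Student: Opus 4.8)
The plan is to analyze the cokernel $F$ by putting the semistable Kronecker module $M$ into a normal form under the $\SL_2 \times \SL_2$-action, and then reading off the sheaf $F$ directly from the matrix of linear forms. Since $M$ is semistable, Lemma \ref{lem:stability} tells me exactly which normal forms can occur: after row and column operations $M$ is \emph{not} equivalent to a matrix with a zero row or zero column, so every $2\times 2$ matrix of linear forms I must consider has the property that its entries cannot be simultaneously cleared to produce a vanishing line. I would first record that $F$ is supported where $\det M = 0$, a quadric hypersurface $Q \subset \PP^n$ cut out by the $2\times 2$ determinant of linear forms, and that the rank of this quadratic form is at most $4$ (since $\det M$ is a determinant of a $2\times 2$ matrix of linear forms, hence a sum of at most four products $x_i x_j$ after diagonalization). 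This immediately separates the generic case $\mathrm{rank} = 3, 4$ from the degenerate cases.

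The key computation is the identification of $F$ in the generic case. When the linear forms appearing in $M$ are linearly independent, I would show that $F \cong I_{\PP^{n-2}, Q}(1)$, the twisted ideal sheaf of a codimension-two linear subspace sitting inside the quadric $Q$. Concretely, from the resolution $0 \to 2\cO_{\PP^n}(-1) \stackrel{M}{\to} 2\cO_{\PP^n} \to F \to 0$ one sees that $F$ is a rank-one sheaf on $Q$; computing its Hilbert polynomial from the resolution gives $m^2/2 + \cdots$ matching that of $\cO_Q(1)$ twisted down along a $\PP^{n-2}$, and the cokernel presentation realizes the embedding $\PP^{n-2} \subset Q$ as the common zero locus of the entries in a row. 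I would verify stability of $F$ in this case by a standard slope/reduced-Hilbert-polynomial argument: any proper subsheaf destabilizing $F$ would pull back to a sub-Kronecker-module violating the stability inequality of the Drezet criterion (the Corollary above), contradicting semistability of $M$.

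For the degenerate case I would treat the strictly semistable and the linearly dependent situations. By Lemma \ref{lem:stability}(2)--(3), a strictly semistable $M$ with closed orbit is equivalent to a diagonal matrix $\mathrm{diag}(g, h)$ with $g, h \in V^* \setminus \{0\}$, and then the resolution splits the cokernel, giving $F \cong \cO_H \oplus \cO_{H'}$ for the hyperplanes $H = \{g = 0\}$ and $H' = \{h = 0\}$; allowing a nonzero off-diagonal term $k$ produces a nonsplit extension of $\cO_H$ by $\cO_{H'}$, which is exactly case (2). I would also handle the rank-$3$ specialization of case (1), where $Q$ is a quadric cone, by noting the ideal-sheaf description persists. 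The final clause, that $F$ is always semistable (and stable in case (1)), then follows by collecting these: in case (2) the S-equivalence class contains $\cO_H \oplus \cO_{H'}$, which has the same reduced Hilbert polynomial as each summand and is therefore strictly semistable but not stable.

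The main obstacle I anticipate is the stability verification in case (1): translating the Drezet-type GIT semistability of the Kronecker module $M$ into the Gieseker/slope stability of the sheaf $F$ requires a careful correspondence between sub-Kronecker-modules (pairs $(V_1, V_2)$ of subspaces with $M(W \otimes V_1) \subset V_2$) and destabilizing subsheaves of $F$. Since this is stated to be a direct generalization of \cite[Lemma 5.2]{CM16}, I expect the correspondence to be set up there for $n = 2$; the work is to check that the cohomological vanishing and the Hilbert-polynomial comparison survive the passage to higher $n$, where $F$ is supported on a quadric hypersurface rather than a quadric curve, so the subsheaf analysis must account for the larger-dimensional support.
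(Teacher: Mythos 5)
Your proposal has two genuine gaps, and both occur at the places you yourself flag as delicate. First, your case division is ill-posed: you split according to whether ``the linear forms appearing in $M$ are linearly independent,'' but for $n=2$ (which the lemma allows) the four entries of $M$ are four linear forms in a three-dimensional space, hence \emph{never} linearly independent, even though a generic $M$ there is stable with $\det M$ of rank $3$ or $4$. The relevant dichotomy is not linear independence of entries, nor even stability of $M$; the paper instead composes $M$ with a generic injection $\cO_{\PP^{n}}(-1) \to 2\cO_{\PP^{n}}(-1)$ (a generic column combination) and splits according to whether the resulting cokernel is $I_{L,\PP^{n}}(1)$ or $\cO_{H}\oplus\cO_{\PP^{n}}$. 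In the first case two applications of the snake lemma identify $F \cong I_{L,Q}(1)$ with $Q = \{\det M = 0\}$ of rank $\le 4$ --- and, importantly, the sub-case where $Q$ has rank $\le 2$ still occurs here and lands in conclusion (2) of the statement, as $I_{L, H\cup H'}(1)$ is an extension of $\cO_{H}$ by $\cO_{H'}$. Your identification of $F$ via Hilbert polynomial matching is also not a proof (Hilbert polynomials do not determine sheaves); the snake-lemma argument is what makes this step rigorous.

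Second, and more seriously, your plan for stability in case (1) --- pulling back a destabilizing subsheaf of $F$ to a destabilizing sub-Kronecker-module --- is both unestablished and structurally insufficient. The problematic subsheaves of $I_{L,Q}(1)$ are sheaves like $I_{Z,Q}(1)$ for $Z \supsetneq L$; these do not arise from pairs of subspaces $(V_{1},V_{2})$ of the module in any evident way, so the claimed contradiction with Drezet's criterion cannot be run as stated. Moreover, even if such a correspondence existed, semistability of $M$ (which is all the lemma assumes) would at best yield \emph{semi}stability of $F$, whereas the lemma asserts $F$ is \emph{stable} in case (1); bridging that gap would additionally require showing that rank $3$ or $4$ of $\det M$ forces $M$ to be stable, which you never address. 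The paper's proof shows all of this machinery is unnecessary: once $Q$ has rank $3$ or $4$ it is integral, so every nonzero subsheaf of the rank-one torsion-free sheaf $I_{L,Q}(1)$ has the form $I_{Z,Q}(1)$ with $L \subset Z \subset Q$, and a direct comparison of reduced Hilbert polynomials ($\dim Z = \dim L$ gives $p(I_{Z,Q}(1)) < p(I_{L,Q}(1))$; $\dim Z = \dim Q$ forces $Z = Q$) gives stability with no reference to the GIT stability of $M$ at all.
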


\begin{proof}
By composing $M$ with an injective morphism $\cO_{\PP^{n}}(-1) \to 2\cO_{\PP^{n}}(-1)$, we obtain an injective morphism $0 \to \cO_{\PP^{n}}(-1) \to 2\cO_{\PP^{n}}$ whose cokernel is isomorphic to either $I_{L, \PP^{n}}(1)$ for a linear subspace $L$ of dimension $n-2$, or $\cO_{H}\oplus \cO_{\PP^{n}}$. 

\textsf{Case 1.} Suppose that the cokernel is isomorphic to $I_{L, \PP^{n}}(1)$. 

We have a commutative diagram
\[
	\xymatrix{0 \ar[r]& \cO_{\PP^{n}}(-1) \ar[r] \ar[d] 
	& 2\cO_{\PP^{n}} \ar[r] \ar@{=}[d] & I_{L, \PP^{n}}(1) \ar[d] \ar[r]
	& 0\\
	0 \ar[r] & 2\cO_{\PP^{n}}(-1) \ar[r] & 2\cO_{\PP^{n}} \ar[r]
	& F \ar[r] & 0.}
\]
Apply the snake lemma, then we obtain 
\[
	\ses{\cO_{\PP^{n}}(-1)}{I_{L, \PP^{n}}(1)}{F}.
\]
From the sequence $\ses{I_{L, \PP^{n}}(1)}{\cO_{\PP^{n}}(1)}{\cO_{L}(1)}$ and the snake lemma again, we obtain $F \cong I_{L, Q}(1)$ for some quadric hypersurface $Q$. Here $Q$ is the support of $\cO_{\PP^{n}}(-1) \to \cO_{\PP^{n}}(1)$. Since the defining equation of $Q$ is in $\rH^{0}(I_{L}(2))$, it has rank at most 4. 

If $Q$ has rank 3 or 4, it is irreducible and reduced. Then every subsheaf of $I_{L, Q}(1)$ is of the form $I_{Z, Q}(1)$ for some subscheme $L \subset Z \subset Q$. If $\dim Z = \dim L$, then clearly $p(I_{Z, Q}(1)) < p(I_{L, Q}(1))$ where $p$ is the reduced Hilbert polynomial. If $\dim Z = \dim Q$, since $Q$ is irreducible and reduced, $Z = Q$ and $I_{Z, Q}(1) = 0$. Thus $I_{L, Q}(1)$ is stable. 

If $Q$ has rank $\le 2$, then $Q = H \cup H'$ or $2H$ for two hyperplanes $H, H'$. Suppose that $L \subset H'$. From $\ses{I_{H', H\cup H'}(1)}{I_{L, H\cup H'}(1)}{I_{L, H'}(1)}$, we obtain 
\[
	\ses{\cO_{H'}}{I_{L, H\cup H'}(1)}{\cO_{H}}.
\]
Thus $F = I_{L, H \cup H'}$ is semistable. Furthermore, since $p(\cO_{H'}) = p(I_{L, H\cup H'}(1))$, $F$ is strictly semistable. $Q = 2H$ case is similar. 

\textsf{Case 2.} Assume that the cokernel is $\cO_{H}\oplus \cO_{\PP^{n}}$.

In this case, it is straightforward to see that $M$ is represented by a matrix in item (2) or (3) in Lemma \ref{lem:stability}. Then $F = I_{L, H\cup H'}(1)$ (in the case of (2)) or $F = \cO_{H}\oplus \cO_{H'}$ (in the case of (3)) and $F$ fits in an exact sequence $\ses{\cO_{H'}}{F}{\cO_{H}}$.
\end{proof}

\begin{proposition}
Let $\rM_{\PP V}(v)$ be the moduli space of sesmistable pure sheaves $F$ with $v:= ch(F) = 2ch(\cO_{\PP^{n-1}})$. Then $\bK := \PP(V^{*}\otimes \mathfrak{gl}_{2})\git \SL_{2} \times \SL_{2}$ is isomorphic to the connected component of $\rM_{\PP V}(v)$ containing $I_{\PP^{n-2}, Q}(1)$. 
\end{proposition}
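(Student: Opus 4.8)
The plan is to realize both $\bK$ and the relevant component of $\rM_{\PP V}(v)$ as quotients parametrizing the \emph{same} sheaves, with the dictionary supplied by the resolution \eqref{eqn:resolution}, and then to check that the resulting bijection is an isomorphism of varieties. First I would build a morphism $\bK \to \rM_{\PP V}(v)$. Over the semistable locus $\bX^{ss} := \PP(V^{*}\otimes \mathfrak{gl}_{2})^{ss}$ there is a tautological map of sheaves $2\cO_{\PP V}(-1) \to 2\cO_{\PP V}$ on $\bX^{ss} \times \PP V$ restricting on each slice $\{M\}\times \PP V$ to the map in \eqref{eqn:resolution}. By Lemma \ref{lem:stability} a semistable $M$ has nonvanishing determinant, so this map is fiberwise injective; hence its cokernel $\mathcal{F}$ is flat over $\bX^{ss}$ with fibers the sheaves $F$ of Lemma \ref{lem:stabilitycomparison}, all semistable with $ch(F) = v$. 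The coarse moduli property of $\rM_{\PP V}(v)$ produces a classifying map $\bX^{ss} \to \rM_{\PP V}(v)$. Since replacing $M$ by $B M A^{-1}$ for $(A,B)\in \SL_{2}\times\SL_{2}$ merely re-trivializes the two bundles, the cokernel is unchanged up to isomorphism, so the map is invariant and descends to
\[
	f \colon \bK \to \rM_{\PP V}(v).
\]
As $\bK$ is irreducible and $f$ passes through the stable sheaf $I_{\PP^{n-2}, Q}(1)$, its image lies in the single component $\bK' \subset \rM_{\PP V}(v)$ containing that point.

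Next I would show $f$ is bijective onto $\bK'$. For \emph{injectivity}, two semistable modules lie in the same $\SL_{2}\times\SL_{2}$-orbit closure if and only if their associated graded objects coincide; using the explicit normal forms of Lemma \ref{lem:stability}, one matches these graded data with the Jordan--H\"older factors $\cO_{H}, \cO_{H'}$ of the cokernel, so GIT-equivalence of $M, M'$ corresponds exactly to S-equivalence of $F, F'$. For \emph{surjectivity} I would show every $F \in \bK'$ admits a resolution of the form \eqref{eqn:resolution}. The vanishings $h^{0}(F(-1)) = 0$ and $h^{i}(F) = 0$ for $i>0$, together with $h^{0}(F)=2$, follow from semistability and the value of $v$; a standard syzygy (Beilinson-type) computation then produces $0 \to 2\cO_{\PP V}(-1)\xrightarrow{M} 2\cO_{\PP V} \to F \to 0$, and the module $M$ is forced to be semistable, since a destabilizing subspace of $M$ would yield a destabilizing subsheaf of $F$, contradicting Lemma \ref{lem:stabilitycomparison}.

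Finally I would upgrade this bijection to an isomorphism. The resolution is canonical, so over $\bK'$ (after an \'etale base change splitting the rank-two sheaf with fiber $H^{0}(F)$) the universal sheaf yields a map $2\cO_{\PP V}(-1) \to 2\cO_{\PP V}$, i.e.\ a Kronecker module, which is $\SL_{2}\times\SL_{2}$-equivariant and hence defines a morphism $g \colon \bK' \to \bK$ inverse to $f$. Since $\bK$ is normal (a GIT quotient of the smooth $\bX$), an alternative is to invoke Zariski's main theorem: a bijective morphism in characteristic zero onto a normal variety is an isomorphism, provided $\bK'$ is normal. The main obstacle is precisely this last step: one must check that the canonical resolution varies in a flat family over \emph{all} of $\bK'$ --- equivalently that the relevant cohomology is locally free of the expected rank even along the strictly semistable boundary --- so that $g$ is an honest morphism rather than a mere set-theoretic inverse, and that the S-equivalence classes are matched on the nose at the boundary. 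This boundary analysis, rather than the generic (stable) case, is where the real work lies.
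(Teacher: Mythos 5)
Your first half matches the paper: the universal cokernel over $\bX^{ss}$ gives an $\SL_{2}\times\SL_{2}$-invariant classifying map, it descends to $\bar f\colon \bK \to \rM_{\PP V}(v)$, and injectivity comes from matching the closed-orbit normal forms of Lemma \ref{lem:stability} with the S-equivalence classes of the cokernels. The divergence, and the genuine gap, is in how you get from there to the connected component. Your surjectivity step claims that $h^{0}(F(-1))=0$, $h^{i}(F)=0$ for $i>0$, and $h^{0}(F)=2$ ``follow from semistability and the value of $v$.'' Notice that this claim makes no use of membership in the component $\bK'$: if it were true as stated, then every semistable sheaf with $ch = v$ would admit the resolution \eqref{eqn:resolution}, so $\rM_{\PP V}(v)$ would equal $\bK$ outright and in particular be irreducible --- but this is exactly the open question the authors pose immediately after this proposition (``Is there any extra component of $\rM_{\PP V}(v)$ if $n>3$?''). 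Semistability controls $h^{0}$ of negative twists and, via Serre duality, the top cohomology, but it does not control middle cohomology $h^{i}(F)$ for $0<i<n-1$; moreover, to collapse the Beilinson spectral sequence on $\PP^{n}$ to a two-term \emph{linear} resolution you need vanishings beyond the three you list (e.g.\ of $H^{j}(F\otimes \Omega^{i}(i))$-type terms), none of which are supplied. So the surjectivity step would fail, or at least requires an argument you have not given and which the authors themselves avoid.

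The paper sidesteps surjectivity-onto-the-component entirely, in two moves you are missing. First, it identifies the image of $\bar f$ with the \emph{irreducible} component through $I_{\PP^{n-2},Q}(1)$ by a dimension count: at a general stable point of the image, $\dim T_{[F]}\rM_{\PP V}(v)=\ext^{1}(F,F)=4n-3=\dim\bK$, so the closed irreducible image cannot sit inside anything bigger. Second --- and this is the key idea absent from your proposal --- it proves that the connected component coincides with this irreducible component and is normal, by working upstairs on the Quot scheme: the resolution \eqref{eqn:resolution} gives $\Ext^{2}(F,F)=0$, and together with $\Ext^{1}(K,F)=0$ (where $K$ is the kernel of $\cH\to F$) this shows the Quot component containing $\cH\to I_{\PP^{n-2},Q}(1)$ is smooth, hence its connected and irreducible components agree and the GIT quotient is irreducible and normal. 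This single smoothness argument delivers both the word ``connected'' in the statement and the normality that converts the injective morphism $\bar f$ into an isomorphism. Your final paragraph correctly senses that the boundary/normality issue is ``where the real work lies,'' but your proposed fix (a flat family of canonical resolutions over $\bK'$, or Zariski's main theorem ``provided $\bK'$ is normal'') presupposes the unproven surjectivity and runs into the absence of a universal family at strictly semistable points; the Quot-scheme smoothness argument is the missing ingredient that resolves all of it at once.
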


\begin{proof}
Let $\rM_{\PP V}(v)^{c}$ (resp. $\rM_{\PP V}(v)^{m}$) be the connected (resp. irreducible) component of $\rM_{\PP V}(v)$ containing $I_{\PP^{n-2}, Q}(1)$. We will show that $\bK \cong \rM_{\PP V}(v)^{m} \cong \rM_{\PP V}(v)^{c}$. 

By Lemma \ref{lem:stabilitycomparison}, the universal family of quiver representations over $\PP (V^{*}\otimes \mathfrak{gl}_{2})^{ss}$ induces a morphism $f : \PP (V^{*}\otimes \mathfrak{gl}_{2})^{ss} \to \rM_{\PP V}(v)$ and $f$ is $\SL_{2} \times \SL_{2}$-invariant. Thus the map $f$ descends to the quotient map 
\[
	\bar{f} : \bK := \PP (V^{*}\otimes \mathfrak{gl}_{2})\git 
	\SL_{2}\times \SL_{2} \to \rM_{\PP V}(v).
\]
From the description of cokernels in Lemma \ref{lem:stabilitycomparison}, it is clear that $\bar{f}$ is injective. Furthermore, at a general stable point $[F] \in \im \;\bar{f}$, $\dim T_{[F]}\rM_{\PP V}(v) = \Ext^{1}(F, F) = 4n - 3 = \dim \bK$. Therefore $\im \;\bar{f} = \rM_{\PP V}(v)^{m}$. Since $\bK$ is normal and $\bar{f}$ is injective, $\bK$ is isomorphic to the normalization of $\rM_{\PP V}(v)^{m}$. 

Now it is sufficient to show that $\rM_{\PP V}(v)^{c}$ is irreducible and normal. From the standard construction of moduli spaces of semistable sheaves, $\rM_{\PP V}(v)$ is an $\SL_{P(m)}$-GIT quotient of the quot scheme $\mathrm{Quot}(\cH, P)$ where $\cH = W \otimes \cO_{\PP V}(-m)$, $\dim W = P(m) = P(F(m))$ for some $m \gg 0$. From the resolution \eqref{eqn:resolution}, it is straightforward to see that $\Ext^{2}(F, F) = 0$. Since $F$ is $m$-regular for some $m \gg 0$, we may assume that $\Ext^{1}(\cH, F) = 0$. Thus if we denote the kernel of $\cH \to F \to 0$ by $K$, then $\Ext^{1}(K, F) = 0$. This implies that the irreducible component of $\mathrm{Quot}(\mathcal{H}, P)^{ss}$ containing $\cH \to I_{\PP^{n-2}, Q}(1) \to 0$ is smooth and hence coincides with the connected component. In particular, its quotient, $\rM_{\PP V}(v)^{c}$, is irreducible and normal. Therefore $\rM_{\PP V}(v)^{c} = \rM_{\PP V}(v)^{m} = \bK$. 
\end{proof}

When $n = 3$, it was shown that $\rM_{\PP V}(v)$ is indeed irreducible (\cite[Proposition 3.6]{LP93b}). Since $P(F)(m) = m^{2}+3m+2$ for a semistable sheaf $F$ of class $v$, we will use the notation $\rM_{\PP V}(m^{2}+3m+2)$ for $\rM_{\PP V}(v)$ if it is better in the context. 

\begin{question}
Is there any extra component of $\rM_{\PP V}(v)$ if $n > 3$?
\end{question}

\subsection{Birational models of moduli spaces of rational curves in a Grassmannian}\label{ssec:KroneckerGrass}

When the dimension vector is $(2, d)$ where $d < n+1 = \dim V$, the moduli space of Kronecker $V$-modules $\PP\Hom(V \otimes \CC^{2}, \CC^{d})\git\SL_{2} \times \SL_{d}$ provides a birational model of $\overline{\rM}_{0,0}(\Gr(d, V^{*}), d)\cong \overline{\rM}_{0,0}(\Gr(n-d+1, V), d)$. This is a direct generalization of \cite[Section 6.1]{CM16}.

\begin{proposition}\label{prop:KroneckerKontsevichbirational}
There is a birational map 
\begin{equation}\label{eqn:KroneckerGrass}
	\Phi : \PP\Hom(V \otimes \CC^{2}, \CC^{d})\git \SL_{2} \times \SL_{d}
	\dashrightarrow \overline{\rM}_{0,0}(\Gr(d, V^{*}), d).
\end{equation}
\end{proposition}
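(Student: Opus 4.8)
The plan is to construct $\Phi$ geometrically by reading a Kronecker module as a one-parameter family of linear surjections, following the strategy of \cite[Section 6.1]{CM16}, and then to exhibit an explicit inverse on a dense open locus. First I would fix the identification $\PP^{1} = \PP(\CC^{2})$, so that $\rH^{0}(\PP^{1}, \cO(1)) \cong \CC^{2*}$ and hence
\[
	\Hom(V \otimes \CC^{2}, \CC^{d}) \cong \Hom\big(V \otimes \cO_{\PP^{1}},\, \CC^{d} \otimes \cO_{\PP^{1}}(1)\big).
\]
Under this identification a module $M$ is literally a morphism of sheaves $\Psi_{M} : V \otimes \cO_{\PP^{1}} \to \CC^{d} \otimes \cO_{\PP^{1}}(1)$, whose fiber at $[s] \in \PP(\CC^{2})$ is the evaluation $M_{s} : V \to \CC^{d}$, $v \mapsto M(v \otimes s)$. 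Since $\Gr(d, V^{*})$ is the Grassmannian of rank-$d$ quotients of $V$, whenever $\Psi_{M}$ is fiberwise surjective it determines a morphism $f_{M} : \PP^{1} \to \Gr(d, V^{*})$ with $f_{M}^{*}\cQ \cong \CC^{d} \otimes \cO_{\PP^{1}}(1)$, where $\cQ$ is the universal quotient bundle; this has degree $\deg(\CC^{d}\otimes\cO(1)) = d$, so $f_{M}$ is a degree $d$ stable map.

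Next I would check that the assignment descends to the quotient. For a general $M$ the maximal minors of $M_{s}$ (degree-$d$ binary forms in $s$) have no common zero, so $M_{s}$ is surjective for every $[s]$ and $f_{M}$ is an honest morphism; this is where $n+1 \ge d$ is used. The $\SL_{d}$-action post-composes $M_{s}$ with an automorphism of $\CC^{d}$, which fixes the quotient and hence the point of $\Gr(d, V^{*})$, while the $\SL_{2} = \SL(\CC^{2})$-action reparametrizes $\PP^{1} = \PP(\CC^{2})$ and so yields an isomorphic unpointed stable map. Therefore $M \mapsto [f_{M}]$ is constant on orbits and descends to the desired rational map $\Phi$ on $\PP\Hom(V\otimes\CC^{2},\CC^{d})\git\SL_{2}\times\SL_{d}$.

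To invert, given a general degree-$d$ stable map $f : \PP^{1} \to \Gr(d, V^{*})$, I would pull back the tautological surjection to obtain $V \otimes \cO_{\PP^{1}} \twoheadrightarrow \cF := f^{*}\cQ$, a rank-$d$, degree-$d$ quotient bundle, and take global sections, giving $V \to \rH^{0}(\cF)$. For the generic $f$ the bundle $\cF$ has balanced splitting type $\cF \cong \cO_{\PP^{1}}(1)^{\oplus d}$, so $\rH^{0}(\cF) \cong \CC^{d}\otimes\CC^{2*}$ and the induced map $V \to \CC^{d}\otimes\CC^{2*}$ is precisely a module in $\Hom(V\otimes\CC^{2},\CC^{d})$; the choices of the splitting isomorphism and of a basis of $\rH^{0}(\cO(1))$ are exactly the $\SL_{d}\times\SL_{2}$-ambiguity, so this defines a rational inverse $\Psi$. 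The two constructions are visibly mutually inverse on the dense open loci where both are defined. To conclude, both sides are irreducible (the GIT quotient of the irreducible $\PP\Hom$, and $\overline{\rM}_{0,0}(\Gr(d,V^{*}),d)$), and the dimension count yields $2d(n+1) - d^{2} - 3$ on each side, so $\Phi$ is birational.

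The step I expect to be the main obstacle is the dominance of $\Phi$, i.e. that the \emph{generic} degree-$d$ stable map has balanced pullback $f^{*}\cQ \cong \cO_{\PP^{1}}(1)^{\oplus d}$, equivalently that the locus of maps arising as some $f_{M}$ is dense. I would handle this either by invoking the standard genericity of balanced splitting types for general maps to Grassmannians, or by a direct tangent-space computation showing $\Phi$ is dominant; once dominance and generic injectivity are in hand, Zariski's main theorem (in characteristic $0$ a dominant, generically injective morphism of varieties is birational) completes the argument.
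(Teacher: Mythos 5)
Your proposal is correct and takes essentially the same route as the paper: the paper likewise reads a Kronecker module as a family over $\PP^{1}$ of linear maps out of $V$ (phrased dually, as a rank-$d$, degree-$d$ subsheaf $d\cO_{\PP^{1}}(-1) \hookrightarrow V^{*}\otimes \cO_{\PP^{1}}$ rather than a quotient $V \otimes \cO_{\PP^{1}} \twoheadrightarrow \CC^{d}\otimes\cO_{\PP^{1}}(1)$), checks $\SL_{2}\times\SL_{d}$-equivariance so the map descends to the GIT quotient, and concludes birationality because a general balanced, non-degenerate stable map arises from a unique stable Kronecker module. Your explicit inverse via $f^{*}\cQ$ and global sections, together with the genericity of the balanced splitting type, simply fleshes out the one-sentence justification the paper gives for that final step.
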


\begin{proof}
Let $S \to \PP \Hom(V \otimes \CC^{2}, \CC^{d}) \cong \PP (\Hom(\CC^{2}, \CC^{d})\otimes V^{*})$ be a morphism. It induces a family of sheaf morphisms 
\[
	2\cO_{S \times \PP V}(-1) \stackrel{M}{\longrightarrow}
	d\cO_{S \times \PP V}.
\]
By taking the pull-back by the projection $q : S \times \PP V \times \PP^{1} \to S \times \PP V$, we obtain 
\[
	2\cO_{S \times \PP V \times \PP^{1}}(-1, 0)
	\stackrel{q^{*}M}{\longrightarrow}
	d\cO_{S \times \PP V \times \PP^{1}}.
\]
By composing with the tautological inclusion $\iota : \cO_{S \times \PP V \times \PP^{1}}(-1, -1) \to 2\cO_{S \times \PP V \times \PP^{1}}(-1, 0)$, we have
\[
	\cO_{S \times \PP V \times \PP^{1}}(-1, -1) 
	\stackrel{q^{*}M \circ \iota}{\longrightarrow}
	d\cO_{S \times \PP V \times \PP^{1}}.
\]
Take the dual
\[
	d\cO_{S \times \PP V \times \PP^{1}}
	\stackrel{(q^{*}M \circ \iota)^{*}}{\to}
	\cO_{S \times \PP V \times \PP^{1}}(1, 1),
\]
take the push-forward $p_{*}$ for $p : S \times \PP V \times \PP^{1} \to S \times \PP^{1}$, and finally take the tensor product with $\cO_{S \times \PP^{1}}(-1)$, we have:
\begin{equation}\label{eqn:familyofmaps}
	d\cO_{S \times \PP^{1}}(-1)
	\stackrel{p_{*}((q^{*}M \circ \iota)^{*}) \otimes \cO_{\PP^{1}}(-1)}
	{\longrightarrow}
	V^{*} \otimes \cO_{S \times \PP^{1}}.
\end{equation}
For a general point, it defines a family of rank $d$, degree $d$ subbundle of a trivial bundle $V^{*} \otimes \cO_{\PP^{1}}$. Thus we obtain a family of stable maps to $\Gr(d, V^{*})$ (or equivalently, to $\Gr(n-d+1, V)$). 

So we have a rational map $\PP\Hom (V \otimes \CC^{2}, \CC^{d}) \dashrightarrow \overline{\rM}_{0,0}(\Gr(d, V^{*}), d)$. This map is (on the domain) $\SL_{2} \times \SL_{d}$-equivariant since $\SL_{2}$ acts as the change of coordinates of $\PP^{1}$, and $\SL_{d}$ acts as the change of coordinates of $d\cO_{\PP^{1}}(-1)$. Therefore the rational map induces a quotient map
\[
	\Phi : \PP \Hom(V \otimes \CC^{2}, \CC^{d}) \git \SL_{2} \times \SL_{d}
	\dashrightarrow
	\overline{\rM}_{0,0}(\Gr(d, V^{*}), d).
\]
This map is birational since a general balanced, non-degenerate stable map can be obtained from a unique stable Kronecker module. 
\end{proof}

In the next section, we will show that when $d = 2$, the inverse of $\Phi$ is indeed a partial desingularization in the sense of Kirwan (\cite{Kir85}). In particular,  $\Phi^{-1}$ is regular. In general, $\Phi^{-1}$ is a rational contraction. 

\begin{question}
For $d \ge 3$, is $\Phi^{-1}$ regular?
\end{question}

\subsection{Moduli of Kronecker modules as a double cover}\label{ssec:Kroneckerdblcover}

Fix a natural number $n \ge 3$. In this section, we show that the contraction $T_{4}$ of $\bH$ in Section \ref{ssec:determinantalvarieties} is isomorphic to a moduli space of Kronecker modules. 

\begin{proposition}
Let $V$ be an ($n+1$)-dimensional vector space. Then
\[
	\bK := \PP (V^{*}\otimes \mathfrak{gl}_{2})\git \SL_{2} \times \SL_{2} 
	\cong T_{4}.
\]
\end{proposition}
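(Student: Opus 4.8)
The plan is to identify both $\bK$ and $T_4$ with the same moduli-theoretic object, so that the isomorphism becomes a tautology rather than an explicit birational computation. The key observation is that $T_4$ parametrizes (generically) pairs $(Q, P)$ where $Q$ is a rank $4$ quadric in $\PP V$ and $P$ is one of the two pencils of $\PP^{n-2}$'s ruling $Q$. On the other side, we have already identified $\bK \cong \rM_{\PP V}(v)^c$, the moduli of semistable sheaves with $v = 2\,\mathrm{ch}(\cO_{\PP^{n-1}})$, whose general member is $I_{\PP^{n-2}, Q}(1)$ by Lemma \ref{lem:stabilitycomparison}. The natural bridge is that specifying such an ideal sheaf on a rank $4$ quadric is exactly the data of choosing one ruling of $Q$: the linear subspace $\PP^{n-2} \subset Q$ determines, and is determined by, a point of one connected component of the Fano scheme of $(n-2)$-planes in $Q$, which is precisely a pencil. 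So the set-theoretic identification of $T_4 \setminus S_3$ with an open locus of $\bK$ is forced.

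\textbf{First I would} make this correspondence functorial. The cleanest route is to use the composition $\bM \hookrightarrow \overline{\rM}_{0,0}(\PP(\wedge^{n-1}V), 2) \to \bK$ together with the contraction $\bM \dashrightarrow \bH \to T_4$ recorded in Section \ref{ssec:determinantalvarieties}, and to check that the two rational maps $\bM \dashrightarrow \bK$ and $\bM \dashrightarrow T_4$ have the same fibers. Concretely, a general conic $C$ in $\Gr(n-1, V)$ sweeps out a rank $4$ quadric $Q$ in $\PP V$ (the union of the hyperplanes dual to the points of $C$), and the one-parameter family of $\PP^{n-2}$'s it traces is a pencil $P$; this assigns to $C$ the pair $(Q, P) \in T_4$. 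The same $C$ maps, via its Pl\"ucker resolution, to the sheaf $I_{\PP^{n-2}, Q}(1) \in \bK$. Since both assignments factor through the same quotient of the locus of balanced nondegenerate conics, they define the same rational map to a normal variety and hence agree.

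\textbf{Next I would} promote birationality to isomorphism using normality and finiteness. Both $\bK$ and $T_4$ are normal (the former shown in the preceding proposition; the latter from \cite{HT15}, being a double cover of the normal determinantal variety $S_4$), and both are projective of the same dimension $4n-3$. I would produce a morphism in at least one direction — most naturally $\bK \to T_4$ by sending a sheaf $F$ to the pair $(\mathrm{supp}\, F, \text{ruling})$, the support being the rank $\le 4$ quadric on which $F$ lives and the ruling being read off from the extension data in Lemma \ref{lem:stabilitycomparison} — and then argue it is bijective. A bijective morphism between normal varieties with one of them seminormal (or via Zariski's main theorem, since the map is finite and birational onto a normal target) is an isomorphism. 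The double-cover structure of $T_4$ over $S_4$ corresponds exactly to the two choices of ruling $\sigma_{(1,1)^*}$ versus $\sigma_{(2)^*}$, matching the two ways $\CC^2 \otimes \CC^2$ can be read.

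\textbf{The hard part will be} handling the non-generic loci — the boundary $S_3 \subset T_4$ where the quadric degenerates to rank $3$ (or lower), and the strictly semistable locus of $\bK$ where $F$ is an extension of $\cO_H$ by $\cO_{H'}$ rather than an ideal sheaf on an irreducible quadric. One must verify that the ramification divisor of $T_4 \to S_4$ over $S_3$ is matched correctly with the rank $3$ quadric locus in $\bK$, where the two rulings of $Q$ coincide and the double cover becomes ramified; this is exactly the locus where the stabilizer jumps, as flagged in Lemma \ref{lem:stability}(3). Checking that the map extends regularly across these strata, and that it remains bijective there (so that the two sheaf-theoretic degenerations $\cO_H \oplus \cO_{H'}$ and its nontrivial extensions map correctly to the boundary of $T_4$), is where the real content lies; I expect this to require a careful local analysis of the extension classes in $\Ext^1(\cO_H, \cO_{H'})$ against the geometry of how rank $3$ quadrics sit in the determinantal stratification of $S_4$.
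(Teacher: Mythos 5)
Your skeleton (two finite two-to-one covers of $S_4$, rulings exchanged by transposition, finish with bijectivity plus normality/Zariski's main theorem) matches the paper's endgame, but there is a genuine gap at the central step: you never construct the morphism your argument needs. Your recipe $F \mapsto (\mathrm{supp}\, F, \mathrm{ruling})$ is only a pointwise assignment; since $T_4$ is not a moduli space with a functorial description --- it is \emph{defined} as the double cover of $S_4$ ramified along $S_3$ --- specifying images of closed points does not produce a morphism $\bK \to T_4$, and you yourself defer exactly this issue (``where the real content lies'') to a boundary analysis you do not carry out. The paper's proof exists precisely to fill this hole, and it does so by going in the \emph{opposite} direction, $T_4 \to \bK$: because $\bK$ is a GIT quotient, mapping into it only requires exhibiting a family of semistable Kronecker modules. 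Concretely, on the bundle $E_4$ over the incidence variety $U_4 = \{([\Pi],[Q]) : \PP(\Pi)\subset Q\}$, one writes the equation of $Q$ as $f = m_1\ell_1 + m_2\ell_2$ with $\ell_1,\ell_2$ linear forms vanishing on $\Pi$, forms the semistable matrix $\left[\begin{array}{cc} \ell_1 & -m_2 \\ \ell_2 & m_1 \end{array}\right]$, checks that this descends through $U_4$ to an $S_4$-morphism $\overline{m} : T_4 \to \bK$, and then concludes: $\overline{m}$ is bijective because both sides are two-to-one over $S_4$ (your ruling/transpose matching), and a bijective morphism onto a normal variety is an isomorphism. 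So the choice of direction is not cosmetic; it is what makes the map constructible, and your chosen direction is the one for which no construction is available without substantially more work.

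Two further points. First, your justification that $T_4$ is normal (``being a double cover of the normal determinantal variety $S_4$'') is not a valid inference --- double covers of normal varieties need not be normal; normality of $T_4$ must be imported from \cite{HT15} (the paper relies on it as well). Second, there is a clean repair of your plan that bypasses both the unconstructed morphism and your entire ``hard part'': your first step already gives a birational identification $\bK \dashrightarrow T_4$ \emph{over} $S_4$, since both rational maps from $\bM$ lie over the assignment sending a conic to the quadric it sweeps out. Once you check that $\det : \bK \to S_4$ is finite (quasi-finiteness suffices by properness: the fiber over a rank-$4$ quadric is the pair $\{[M],[M^{t}]\}$, and over $S_3$ it is a single point, e.g.\ the single S-equivalence class of $\cO_{H}\oplus\cO_{H'}$ over rank $\le 2$ quadrics), both $\bK$ and $T_4$ are normal, finite and surjective over $S_4$, and induce the same quadratic extension of function fields; hence each is the normalization of $S_4$ in that extension, and they are isomorphic. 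No stratum-by-stratum extension argument and no analysis of classes in $\Ext^{1}(\cO_{H},\cO_{H'})$ is needed.
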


\begin{proof}
There is the determinant map 
\[
	\det : \bK \to \PP(\Sym^{2}V^{*}), 
\]
which maps $M$ to $\det M$. The image of $\det$ is exactly $S_{4}$. It is straightforward to check that
\begin{enumerate}
\item $\det$ is finite, and generically two-to-one since $\det M = \det M^{t}$ and $M \not\equiv M^{t}$;
\item it is ramified along $S_{3}$. 
\end{enumerate}
Let 
\[
	U_{4} := \{([\Pi], [Q])\;|\; \PP(\Pi) \subset Q\} \subset 
	\Gr(n-1, V) \times \PP(\Sym^{2}V^{*})
\]
be the incidence variety. There is a morphism $\pi : U_{4} \to T_{4}$ with connected fibers. Consider the $(\CC^{2}-\{0\})^{2}\setminus \Delta$-bundle
\[
	E_{4} := \{(\ell_{1}, \ell_{2}, [\Pi], [Q])\;|\; \ell_{i}|_{\Pi} = 0,
	\ell_{i} \ne 0, \PP(\Pi) \subset Q\} \subset (V^{*})^{2} \times 
	\Gr(n-1, V) \times \PP(\Sym^{2}V^{*})
\]
over $U_{4}$. Let $f \in \Sym^{2}V^{*}$ be the defining equation of $Q$. Then $f = m_{1}\ell_{1} + m_{2}\ell_{2}$ for $m_{i} \in V^{*}$ and $m_{i}$'s are defined uniquely up to scalar multiple. There is a morphism $m : E_{4}
 \to \bK$ where 
\[
	m(\ell_{1}, \ell_{2}, [\Pi], [Q]) = \left[\begin{array}{cc}
	\ell_{1} & -m_{2}\\ \ell_{2} & m_{1}\end{array}\right].
\]
Note that this map is well-defined since a scalar multiple to the second column defines the same point in the quotient. 

Now it is straightforward to check that $m$ descends to $U_{4}$ and to $T_{4}$, so we obtain a map $\overline{m} : T_{4} \to \bK$, which is an $S_{4}$-morphism. Since both $T_{4}$ and $\bK$ are two-to-one to $S_{4}$, $\overline{m}$ is bijective. It is an isomorphism since it is a bijective morphism between two normal varieties.
\end{proof}


\section{Partial desingularization}\label{sec:pardesing}

When $d = 2$, the birational map $\Phi^{-1}$ in \eqref{eqn:KroneckerGrass} is indeed a regular contraction. Furthermore, it can be understood as the partial desingularization in the sense of \cite{Kir85}. In this section, we prove the following result. Let $V$ be a fixed $(n+1)$-dimensional vector space. 

\begin{theorem}\label{thm:Kirwandes}
The partial desingularization of $\bK := \PP(V^{*}\otimes \mathfrak{gl}_{2})\git \SL_{2} \times \SL_{2}$ is isomorphic to $\bM := \overline{\rM}_{0,0}(\Gr(n-1, V), 2)$. 
\end{theorem}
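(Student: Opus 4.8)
The plan is to realize the birational map $\Phi$ of Proposition \ref{prop:KroneckerKontsevichbirational} (with $d = 2$) as the inverse of Kirwan's partial desingularization, by first promoting $\Phi^{-1}$ to a regular birational contraction $\pi : \bM \to \bK$ and then matching $\pi$, stratum by stratum, with the canonical sequence of blow-ups that defines the partial desingularization. To build $\pi$ I would reverse the construction in the proof of Proposition \ref{prop:KroneckerKontsevichbirational}: over $\bM$, pull back the universal rank-two subbundle under $\Gr(n-1,V) \cong \Gr(2, V^{*})$ along the universal stable map and read off, from its restriction to each $\PP^{1}$-component, a family of Kronecker modules in $\PP(V^{*} \otimes \mathfrak{gl}_{2})$. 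The $\SL_2 \times \SL_2$-ambiguity (reparametrization of the domain, and change of frame of the rank-two bundle) shows this family descends to a morphism $\pi : \bM \to \bK$ extending $\Phi^{-1}$; one must verify $\pi$ is everywhere regular, the delicate point being stable maps with reducible domains, where the associated module degenerates to a strictly semistable one.

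Next I would pin down the centers of the Kirwan algorithm using Lemma \ref{lem:stability}. The closed strictly semistable orbits are the diagonal modules $\mathrm{diag}(g,h)$ with $g, h \in V^{*} \setminus \{0\}$, stratified by stabilizer type. When $g$ is proportional to $h$ the stabilizer contains $\SL_2$, and the corresponding locus---the modules $g\cdot\mathrm{id}$, i.e.\ the double hyperplanes $2\cO_H$ of Lemma \ref{lem:stabilitycomparison}---has image $Z_0 \subset \bK$ isomorphic to $\PP V^{*} \cong \PP^{n}$; when $g$ is not proportional to $h$ the stabilizer is $\CC^{*}$, and the closure of that stratum, parametrizing unordered pairs of hyperplanes, is the next center $Z_1 \supset Z_0$. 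Kirwan's procedure \cite{Kir85} then prescribes: blow up $\bX^{ss}$ along the $\SL_2 \times \SL_2$-orbit of the deepest ($\SL_2$-)stratum, discard the points rendered unstable, and take the GIT quotient to obtain $\bX^{1}\git \SL_2 \times \SL_2$; then repeat with the proper transform of the $\CC^{*}$-stratum. After these two blow-ups every semistable point is stable, and the resulting quotient $\widetilde{\bK}$ is by definition the partial desingularization. This is the $\SL_2 \times \SL_2$-analogue, over a Grassmannian rather than a projective space, of Kiem's single-$\SL_2$ computation in \cite[Theorem 4.1]{Kie07}.

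To conclude $\bM \cong \widetilde{\bK}$ I would compare the two contractions $\pi : \bM \to \bK$ and $\widetilde{\bK} \to \bK$ over the two strictly semistable loci. Both are isomorphisms over the stable locus $\bK^{s}$, where $\Phi$ is already an isomorphism, so it suffices to match them \'etale-locally over $Z_0$ and $Z_1$. Using a Luna \'etale slice at a point of each stratum, I would identify the stabilizer representation on the normal space to the orbit, perform the blow-up-then-GIT operation on that representation, and verify that the exceptional fibers it produces coincide with the fibers of $\pi$---namely with the boundary strata of $\bM$ lying over a double hyperplane (the degenerate and reducible conics recorded by $\Delta$) and over a pair of distinct hyperplanes. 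Since $\pi$ then agrees with the Kirwan modification on slices, it induces a birational bijective morphism $\bM \to \widetilde{\bK}$ of normal varieties, which is therefore an isomorphism.

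The main obstacle is the \'etale-slice analysis at the deepest stratum $Z_0 \cong \PP^{n}$, where the stabilizer is a $\ZZ_2$-extension of $\SL_2$ (Lemma \ref{lem:stability}(3)) acting on the slice through a nontrivial representation. Here I must compute that representation explicitly, carry out the blow-up followed by the slice GIT quotient, and check that the resulting exceptional divisor reproduces exactly the family of degree-two stable maps specializing to $2\cO_H$---all while keeping careful track of the $\ZZ_2$-factors in the stabilizers and of the normalization built into the definition of $\bK$. Verifying that this local GIT quotient matches the corresponding boundary geometry of $\bM$ is the computational heart of the argument; the $Z_1$-stratum, with its abelian $\CC^{*}$-stabilizer, is comparatively routine.
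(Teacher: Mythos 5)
Your identification of the Kirwan strata and the two blow-up centers (via Lemma \ref{lem:stability}) matches the paper's Section \ref{ssec:partialdesing}, but the two steps that carry the actual content of the theorem are gaps, and you run the construction in the opposite direction from the paper. First, the morphism $\pi : \bM \to \bK$ does not come for free from ``reading off Kronecker modules'': pulling back the tautological subbundle produces a point of $\PP(V^{*}\otimes \mathfrak{gl}_{2})^{ss}$ only when the domain is irreducible and the restricted bundle is balanced, $E \cong 2\cO_{\PP^{1}}(-1)$. On $\Delta$ the restriction to each component is $\cO\oplus\cO(-1)$, and on $D_{\mathrm{unb}}$ it is $\cO\oplus\cO(-2)$; in neither case does your recipe give any module at all, let alone a semistable one. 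So what you actually have is a rational map, regular away from codimension two by normality of $\bM$; proving it is regular everywhere, and computing what it does along $\Delta$, $D_{\deg}$, $D_{\mathrm{unb}}$ --- exactly the information your comparison step consumes --- requires a genuine limit computation (this is what the paper's two explicit examples after the proof of Theorem \ref{thm:Kirwandes} carry out, in the other direction, via elementary modification).

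Second, and more seriously, your concluding inference fails: knowing that the exceptional fibers of $\widetilde{\bK} := \bX^{2}\git G \to \bK$ agree (as abstract varieties) with the fibers of $\pi$ does not produce a morphism $\bM \to \widetilde{\bK}$. Kirwan's partial desingularization is a quotient of blow-ups performed upstairs on $\bX^{ss}$, not a blow-up of $\bK$ along an ideal sheaf on $\bK$, so there is no universal property to invoke by checking behavior on fibers; and in general two birational contractions $X \to Z \leftarrow Y$ of normal varieties can have isomorphic fibers over every point of $Z$ while $X \dashrightarrow Y$ is not regular --- the two sides of a standard flop are the classic example, and flips do occur among the models in this very paper (Lemma \ref{lem:flipHilb}). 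To close your argument you would need an isomorphism of spaces, not a bijection of fibers, \'etale-locally over the two strata, which is a family version of Kiem's theorem and is essentially as hard as the statement being proved. The paper sidesteps both difficulties by constructing the map in the opposite direction: it performs two elementary modifications of the universal family of maps over the Kirwan blow-ups $\bX^{1}$, $\bX^{2}$, obtains a $G$-invariant morphism $\Psi : \bX^{2} \to \bM$, descends it to $\overline{\Psi} : \bX^{2}/G \to \bM$, and then concludes that $\overline{\Psi}$ is an isomorphism because it is a birational morphism between $\QQ$-factorial normal varieties of the same Picard number (three). Unless you supply a mechanism that actually produces the regular lift $\bM \to \widetilde{\bK}$, your proof does not go through.
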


This result is a generalization of \cite[Section 6]{CM16} with minor modifications. For the reader's convenience, we give a detailed proof here.

\subsection{GIT stratification on $\bX^{ss}$}

Let $\bX := \PP(V^{*}\otimes \mathfrak{gl}_{2})$. By using the description of semistable locus in Lemma \ref{lem:stability}, we can define a stratification 
\[
	\bX^{ss} = \bY_{0} \sqcup \bZ_{0} \sqcup \bY_{1} \sqcup 
	\bZ_{1} \sqcup \bX^{s},
\]
as the following. For notational simplicity, let $G = \SL_{2} \times \SL_{2}$. 

Let $\bY_{0} \subset \bX^{ss}$ be the locus of matrices equivalent to scalar matrices. More precisely, let $\bY_{0}'$ be the image of 
\[
	\rho_{0} : \PP(\mathfrak{gl}_{2}) \times \PP V^{*} \to \bX, 
	\quad (A, g) \mapsto A\left[\begin{array}{cc}g&0\\0&g\end{array}\right]
\]
and let $\bY_{0} := \bY_{0}' \cap \bX^{ss}$. Then $\bY_{0} = \rho_{0}(\mathrm{PGL}_{2} \times \PP V^{*})$ and $\rho_{0}$ is an embedding on this locus. Thus $\bY_{0}$ is an ($n+3$)-dimensional smooth closed subvariety. At each closed point $M = g\cdot \mathrm{Id} \in \bY_{0}$, the normal bundle $N_{\bY_{0}/\bX^{ss}}|_{M}$ is naturally isomorphic to $H \otimes \mathfrak{sl}_{2}$, where $H \cong V^{*}/\langle g\rangle$ is an $n$-dimensional quotient of $V^{*}$. 

Let $\bZ_{0} \subset \bX^{ss}$ be the locus of matrices equivalent to upper triangular matrices whose diagonal entries are proportional to each other. Formally, we can define $\bZ_{0}$ as the following. Let $\bZ_{0}'$ be the image of 
\[
	\tau_{0} : G \times \PP ((V^{*})^{2}) \to \bX, \quad
	((A, B), (g, k)) \mapsto 
	A\left[\begin{array}{cc}g & k\\0 &g\end{array}\right]B^{-1}.
\]
Then $\bZ_{0} = (\bZ_{0}' \cap \bX^{ss})\setminus \bY_{0}$. Let $\overline{\bZ}_{0} := \bZ_{0} \sqcup \bY_{0}$, the closure of $\bZ_{0}$ in $\bX^{ss}$. A general fiber of $\tau_{0}$ is 3-dimensional, so $\bZ_{0}$ is a $(2n+4)$-dimensional subvariety. The normal cone $C_{\bY_{0}/\overline{\bZ}_{0}}$ is an analytic locally trivial bundle, whose fiber at $M = g\cdot \mathrm{Id} \in \bY_{0}$ is isomorphic to $\mathrm{Stab} \;M\cdot (H \otimes \langle e\rangle) = \mathrm{Stab} \;M \cdot (H \otimes \langle f\rangle)$. Here $\{h, e, f\}$ is the standard basis of $\mathfrak{sl}_{2}$. In $\PP(N_{\bY_{0}/\overline{\bZ}_{0}}|_{M}) \cong \PP(H \otimes \mathfrak{sl}_{2})$, $\PP(C_{\bY_{0}/\overline{\bZ}_{0}}|_{M}) \cong \PP H \times \mathrm{PGL}_{2}\cdot \PP \langle e \rangle \cong \PP H \times \PP^{1}$.

Let $\bY_{1} \subset \bX^{ss}$ be the locus of matrices equivalent to non-scalar diagonal matrices. We may impose the scheme structure to $\bY_{1}$ as the following way. Let $\bY_{1}'$ be the image of 
\[
	\rho_{1} : G \times \PP((V^{*})^{2}) \to \bX, \quad
	((A, B), (g, k)) \mapsto 
	A \left[\begin{array}{cc}g&0\\0&k\end{array}\right] B^{-1}.
\]
Let $\bY_{1} = (\bY_{1}' \cap \bX^{ss})\setminus \bY_{0}$, and let $\overline{\bY}_{1} := \bY_{0} \sqcup \bY_{1}$ be the closure of $\bY_{1}$ in $\bX^{ss}$. Then $\bY_{1}$ is an irreducible $G$-invariant smooth variety of dimension $2n + 5$, and $\overline{\bY}_{1}$ is singular along $\bY_{0}$. At a closed point $M = \left[\begin{array}{cc}g&0\\0&k\end{array}\right] \in \bY_{1}$, the normal bundle $N_{\overline{\bY}_{1}/\bX^{ss}}|_{M}$ is isomorphic to $K \otimes \langle e, f\rangle$ where $K = V^{*}/\langle g, k\rangle$. For $M = g \cdot \mathrm{Id} \in \bY_{0}$, the normal cone $C_{\bY_{0}/\overline{\bY}_{1}}|_{M}$ is isomorphic to $\mathrm{Stab}\;M \cdot (H \otimes \langle h \rangle) \subset H \otimes \mathfrak{sl}_{2} \cong N_{\bY_{0}/\bX^{ss}}|_{M}$ and its projectivization in $\PP(H \otimes \mathfrak{sl}_{2})$ is isomorphic to $\PP H \times \overline{\mathrm{PGL}_{2}\cdot \PP\langle h \rangle} \cong \PP H \times \PP^{2}$. Note that although $\bY_{1} \cap \bZ_{0} = \emptyset$, $\PP (C_{\bY_{0}/\overline{\bZ}_{0}}|_{M}) \subset \PP(C_{\bY_{0}/\overline{\bY}_{1}}|_{M})$ because the normal cone to $\overline{\bZ}_{0}$ is tangent to the normal cone to $\overline{\bY}_{1}$. 

Finally, let $\bZ_{1} \subset \bX^{ss}$ be the locus of matrices equivalent to upper triangular matrices. If we denote the image of 
\[
	\tau_{1} : G \times \PP((V^{*})^{3}) \to \bX, \quad
	((A, B), (g, k, \ell)) \mapsto 
	A \left[\begin{array}{cc}g&k \\ 0 &\ell\end{array}\right] B^{-1}
\]
by $\bZ_{1}'$, $\bZ_{1} = (\bZ_{1}' \cap \bX^{ss}) \setminus (\overline{\bY}_{1} \sqcup \bZ_{0})$. Then $\overline{\bZ}_{1} = \bZ_{1} \sqcup \bY_{1} \sqcup \bZ_{0} \sqcup \bY_{0}$. We may check that $\bZ_{1}$ is an irreducible $(3n+4)$-dimensional $G$-invariant variety. The normal cone $C_{\overline{\bY}_{1}/\overline{\bZ}_{1}}|_{\bY_{1}}$ is an analytic fiber bundle whose fiber is the union of two disjoint rank $n-1$ subbundles of $N_{\overline{\bY}_{1}/\bX^{ss}}|_{\bY_{1}}$. At a closed point $M = \left[\begin{array}{cc}g&0\\0&k\end{array}\right] \in \bY_{1}$, $C_{\overline{\bY}_{1}/\overline{\bZ}_{1}}|_{M} \cong K \otimes \langle e\rangle \sqcup K \otimes \langle f\rangle \subset K \otimes \langle e, f\rangle \cong N_{\overline{\bY}_{1}/\bX^{ss}}|_{M}$. 

\subsection{Kirwan's partial desingularization}\label{ssec:partialdesing}

In this section, we describe Kirwan's partial desingularization of $\bK := \bX\git G$. For the general construction and its proof, consult \cite{Kir85}. Let $\bX^{0} := \bX^{ss}$. In $\bX^{0}$, the deepest stratum with the largest stabilizer group is $\bY_{0}$. Let $\pi_{1}' :{ \bX^{1}}' \to \bX^{0}$ be the blow-up of $\bX^{0}$ along $\bY_{0}$. Let $\bY_{0}^{1}$ be the exceptional divisor, and let $\overline{\bY}_{1}^{1}$, $\overline{\bZ}_{i}^{1}$ be the proper transform of $\overline{\bY}_{1}$, $\overline{\bZ}_{i}$, respectively. Since the normal cone $C_{\bY_{0}/\overline{\bY}_{1}}$ is a cone over a smooth variety and $\bY_{1}$ is smooth, $\overline{\bY}_{1}^{1}$ is a smooth subvariety of ${\bX^{1}}'$. 

Since $\rho(\bX) = 1$, there is a unique linearization $L_{0}$ on $\bX$, up to scaling. Let $L_{1} := {\pi_{1}'}^{*}(L_{0}) \otimes \cO(-\epsilon_{1} \bY_{0}^{1})$ for some $0 < \epsilon_{1} \ll 1$. Then $L_{1}$ is an ample line bundle with a linearized $G$-action. With respect to this linearization, $\overline{\bZ}_{0}^{1}$ is unstable since any orbit in $\bZ_{0}$ is not closed in $\bX^{0}$ (\cite[Lemma 6.6]{Kir85}). Let $\bX^{1} := {\bX^{1}}' \setminus \overline{\bZ}_{0}^{1}$ and let $\pi_{1} : \bX^{1} \to \bX^{0}$ be the restriction of $\pi_{1}'$. 

Similarly, let $\pi_{2}' : {\bX^{2}}' \to \bX^{1}$ be the blow-up of $\bX^{1}$ along $\overline{\bY}_{1}^{1} \cap \bX^{1}$. Since $\overline{\bY}_{1}^{1} \cap \bX^{1}$ is smooth, ${\bX^{2}}'$ is also smooth. Let $\overline{\bY}_{1}^{2}$ be the exceptional divisor. And let $\bY_{0}^{2}$, $\overline{\bZ}_{1}^{2}$ be the proper transform of $\bY_{0}^{2} \cap \bX^{1}$, $\overline{\bZ}_{1}^{1}\cap \bX^{1}$, respectively. Let $L_{2} := \pi_{2}^{*}(L_{1}) \otimes \cO(-\epsilon_{2}\overline{\bY}_{1}^{2})$ for some $0 < \epsilon_{2} \ll \epsilon_{1}$. Then $L_{2}$ is ample. Furthermore, since $\overline{\bY}_{1}^{1} \cap \bX^{1}$ is a $G$-invariant subvariety, $L_{2}$ inherits a linearized $G$-action, too. With respect to this $G$-action, $\overline{\bZ}_{1}^{2}$ is unstable. Let $\bX^{2} := {\bX^{2}}'\setminus \overline{\bZ}_{1}^{2}$ and let $\pi_{2} : \bX^{2} \to \bX^{1}$ be the restriction of $\pi_{2}'$. 

Note that on $\bX^{2}$, every point has a finite stabilizer. Therefore $\bX^{2} = (\bX^{2})^{ss} = (\bX^{2})^{s}$. The partial desingularization of $\bX \git G$ is $\bX^{2}\git_{L}G = \bX^{2}/G$. The blow-up morphisms $\pi_{1}$, $\pi_{2}$ induce quotient maps $\overline{\pi}_{1}$ and $\overline{\pi}_{2}$. In summary, we obtain the following commutative diagram:
\[
	\xymatrix{\bX^{2} \ar[d]^{/G} \ar[r]^{\pi_{2}}_{\overline{\bY}_{1}^{1}\cap \bX^{1}}
	& \bX^{1} \ar[d]^{/G} \ar[r]^{\pi_{1}}_{\bY_{0}}
	& \bX^{0}\ar[d]^{/G}\\
	\bX^{2}/G \ar[r]^{\overline{\pi}_{2}} 
	& \bX^{1}\git G \ar[r]^{\overline{\pi}_{1}}
	& \bX\git G}
\]
Let $\pi := \pi_{1} \circ \pi_{2}$, and let $\overline{\pi} := \overline{\pi}_{1} \circ \overline{\pi}_{2}$ be its quotient map. Note that the partial desingularization $\bX^{2}/G$ has only finite quotient singularities only, since every point on $\bX^{2}$ has the finite stabilizer. 

During the desingularization process, we can keep track the change of the GIT quotient. For $M \in \bY_{0}$, ${\pi_{1}'}^{-1}(M) \cong \PP(H \otimes \mathfrak{sl}_{2})$ where $H$ is an $n$-dimensional quotient of $V^{*}$, because it is the projectivized normal cone. On the fiber ${\pi_{1}'}^{-1}(M)$, there is an induced $\mathrm{Stab}\;M \cong \SL_{2} \ltimes \ZZ_{2}$-action, which is induced by a trivial action on $H$ and the standard $\SL_{2}$-action on $\mathfrak{sl}_{2}$. Also the $\ZZ_{2}$ acts trivially. The unstable locus is precisely $\PP(C_{\bY_{0}/\overline{\bZ}_{0}}|_{M}) \cong \PP H \times \PP^{1}$. Therefore in $\bX^{1} = \bX^{1}\setminus \overline{\bZ}_{0}^{1}$, the inverse image of $M$ is $\PP(H \otimes \mathfrak{sl}_{2})^{ss}$. If we denote the image of $M$ in $\bX\git G$ by $\overline{M}$, then 
\[
	\overline{\pi}_{1}^{-1}(\overline{M}) \cong 
	\PP(H\otimes \mathfrak{sl}_{2})^{ss}/\mathrm{Stab}\;M
	\cong \PP(H\otimes \mathfrak{sl}_{2})\git \SL_{2}.
\]
The locus of strictly semistable points with closed orbits on $\PP(H \otimes \mathfrak{sl}_{2})^{ss}$ is isomorphic to $(\PP H \times \PP \mathfrak{sl}_{2})^{ss}$, which is precisely $\PP(C_{\bY_{0}/\overline{\bY}_{1}})^{ss}$. (Indeed the strictly semistable locus is $\PP(C_{\bY_{0}/\overline{\bZ}_{1}})^{ss}$.) Thus on the fiber of $\overline{M}$, the second blow-up $\overline{\pi}_{2} : \bX^{2}\git G \to \bX^{1}\git G$ is the partial desingularization of the fiber $\PP(H \otimes \mathfrak{sl}_{2})\git \SL_{2}$. In \cite[Theorem 4.1]{Kie07}, it was shown that the partial resolution is isomorphic to the moduli space $\overline{\rM}_{0,0}(\PP H, 2)$ of degree two stable maps to $\PP H$.

For $M \in \bY_{1}^{1}$, the inverse image ${\pi_{2}'}^{-1}(M)$ is isomorphic to $\PP(K \otimes \langle e,f\rangle)$. On this normal bundle, there is an induced $\mathrm{Stab}\;M \cong \CC^{*}\ltimes \ZZ_{2}$-action. The unstable locus is precisely $\PP(K \otimes \langle e\rangle) \sqcup \PP(K \otimes \langle f\rangle)$ and there is no strictly semi-stable point. Thus on $\bX^{2}$, $\pi_{2}^{-1}(M) \cong \PP(K \otimes \langle e, f\rangle)^{s}$. Therefore in $\bX^{2}/G$, 
\[
	\overline{\pi}_{2}^{-1}(\overline{M}) \cong 
	\PP(K \otimes \langle e, f\rangle)\git \CC^{*}\ltimes \ZZ_{2}
	\cong \PP^{n-2} \times \PP^{n-2}\git \ZZ_{2}
	\cong (\PP^{n-2})^{2}.
\]
Note that $\ZZ_{2}$ acts trivially on the projectivized normal cone. 

\subsection{Elementary modification of maps}

In this section, we prove Theorem \ref{thm:Kirwandes}.

\begin{proof}[Proof of Theorem \ref{thm:Kirwandes}]
Let $\bX^{0} := \PP (V^{*}\otimes \mathfrak{gl}_{2})^{ss}$. By taking the dual of \eqref{eqn:familyofmaps}, we have 
\[
	V\otimes \cO_{\bX^{0} \times \PP^{1}}
	\stackrel{\wedge^{2}(p_{*}((q^{*}M \circ \iota)^{*}) \otimes 
	\cO_{\PP^{1}}(-1))^{*}}{\longrightarrow} 
	2\cO_{\bX^{0} \times \PP^{1}}(1).
\]
It induces a bundle morphism 
\[
	\wedge^{2}V\otimes \cO_{\bX^{0} \times \PP^{1}}
	\stackrel{\wedge^{2}(p_{*}((q^{*}M \circ \iota)^{*}) \otimes 
	\cO_{\PP^{1}}(-1))^{*}}{\longrightarrow} 
	\cO_{\bX^{0} \times \PP^{1}}(2).
\]
Since this map is surjective on $\bX^{s} \times \PP^{1}$, we obtain a rational map 
\[
	f_{0} : \bX^{0} \times \PP^{1} \dashrightarrow \Gr(n-1, V)
	\hookrightarrow \PP(\wedge^{n-1}V) \cong \PP(\wedge^{2}V^{*}).
\]
which is regular on $\bX^{s} \times \PP^{1}$.

Let $F_{0} := \wedge^{2}(p_{*}((q^{*}M \circ \iota)^{*}) \otimes \cO_{\PP^{1}}(-1))^{*}$. Let $\pi_{1} \times \mathrm{id} : \bX^{1} \times \PP^{1} \to \bX^{0} \times \PP^{1}$ be the blow-up morphism. All sections giving $F_{0}$ simultaneously vanish along $\bY_{0}^{1} \times \PP^{1}$. So $f_{0}$ is not defined on $\bY_{0}^{1} \times \PP^{1}$. But this implies that the pull-back morphism $(\pi_{1} \times \mathrm{id})^{*}F_{0}$ factor through 
\[
	\wedge^{2}V\otimes \cO_{\bX^{0} \times \PP^{1}} 
	\stackrel{F_{1}:= (\pi_{1} \times \mathrm{id})^{*}F_{0}}
	{\longrightarrow}
	\cO_{\bX^{1} \times \PP^{1}}(2)(-\bY_{0}^{1}).
\]
Therefore we obtain an extended family $f_{1}$ of rational maps over $\bX^{1}$
\[
	\xymatrix{\bX^{1} \times \PP^{1} \ar[d]^{\pi_{1} \times \PP^{1}}
	\ar@{-->}[rrd]^{f_{1}}\\
	\bX^{0} \times \PP^{1} \ar@{-->}[r]^{f_{0}} &
	\Gr(n-1, V) \ar[r] & \PP(\wedge^{2}V^{*})}
\]
whose undefined locus is precisely a two-to-one \'etale cover of $\overline{\bY}_{1}^{1}$ because for each point $M \in \overline{\bY}_{1}^{1}$, the undefined locus of $f_{1}$ restricted to $\{M\} \times \PP^{1}$ is two distinct points. 

Let $\pi_{2} \times \mathrm{id} : \bX^{2} \times \PP^{1} \to \bX^{1} \times \PP^{1}$ be the second blow-up morphism. The base locus $\mathbf{B}$ of $(\pi_{2} \times \mathrm{id})^{*}f_{1}$ is a two-to-one \'etale cover of $\overline{\bY}_{1}^{2}$, so it is a smooth codimension two subvariety of $\bX^{2} \times \PP^{1}$. Let $\sigma : \Gamma \to \bX^{2} \times \PP^{1}$ be the blow-up along $\mathbf{B}$. Let $\mathbf{E}$ be the exceptional divisor. The composition $s : \Gamma \to \bX^{2} \times \PP^{1} \to \bX^{2}$ is a flat family of rational curves. Moreover, the pull-back morphism $\sigma^{*}(\pi_{2}\times \mathrm{id})^{*}F_{1}$
\[
	\wedge^{2}V \otimes \cO_{\Gamma}
	\stackrel{\sigma^{*}(\pi_{2}\times \mathrm{id})^{*}F_{1}}
	{\longrightarrow}
	\sigma^{*}\cO_{\bX^{2} \times \PP^{1}}(2)(-\bY_{0}^{2})
\]
factors through
\[
	\wedge^{2}V \otimes \cO_{\Gamma}
	\stackrel{F_{2}}{\longrightarrow}
	\sigma^{*}\cO_{\bX^{2} \times \PP^{1}}(2)(-\bY_{0}^{2}-\mathbf{E}).
\]
Now $F_{2}$ is surjective and we obtain a regular morphism
$f_{2} : \Gamma \to \PP(\wedge^{2}V^{*})$. 
\[
	\xymatrix{\Gamma \ar[d]^{\sigma} \ar[rrddd]^{f_{2}}\\
	\bX^{2} \times \PP^{1} \ar[d]^{\pi_{2} \times \mathrm{id}}\\
	\bX^{1} \times \PP^{1} \ar[d]^{\pi_{1} \times \mathrm{id}}
	\ar@{-->}[rrd]^{f_{1}}\\
	\bX^{0} \times \PP^{1} \ar@{-->}[r]^{f_{0}} &
	\Gr(n-1, V) \ar[r] & \PP(\wedge^{2}V^{*})}
\]
So we have a flat family of maps $(s : \Gamma \to \bX^{2}, f_{2} : \Gamma \to \PP(\wedge^{2}V^{*}))$ over $\bX^{2}$. By stabilizing, we obtain a family of stable maps $(\overline{s} : \overline{\Gamma} \to \bX^{2}, \overline{f}_{2} : \overline{\Gamma} \to \PP(\wedge^{2}V^{*}))$. Clearly $\overline{f}_{2}$ factors through $\Gr(n-1, V)$ since it does on an open dense subset. Therefore we have a map $\Psi : \bX^{2} \to \bM$, which is $G$-invariant from the construction. Thus we obtain the quotient map $\overline{\Psi} : \bX^{2}/G \to \bM$. This is an isomorphism since it is a birational morphism between two $\QQ$-factorial normal varieties with the same Picard number, which is 3. 
\end{proof}

Here we leave two explicit examples of the elementary modification of a family of maps over a curve. 

\begin{example}
Let $S$ be a small disk in $\CC$ containing $0$ (or the $\spec$ of a discrete valuation ring) and let 
\begin{eqnarray*}
	g : S &\to& \PP(V^{*}\otimes \mathfrak{gl}_{2})\\
	\lambda & \mapsto & \left[\begin{array}{cc}
	x_{0} & \lambda (\sum_{i=1}^{n}a_{i}x_{i})\\
	\lambda(\sum_{i=1}^{n}b_{i}x_{i}) & x_{0}\end{array}\right].
\end{eqnarray*}
Then the associated map 
\[
	V \otimes \cO_{S \times \PP^{1}} \stackrel{G}{\to} 
	2\cO_{S \times \PP^{1}}(1)
\]
is given by a matrix
\[
	G = \left[\begin{array}{ccccc}
	s & \lambda a_{1}t &\lambda a_{2}t &\cdots &\lambda a_{n}t\\
	t & \lambda b_{1}s & \lambda b_{2}s & \cdots &\lambda b_{n}s
	\end{array}\right]
\]
where $[s:t]$ is the homogeneous coordinate of $\PP^{1}$. Note that this family of maps is not surjective when $\lambda = 0$. By taking the wedge product, we obtain a family of maps $\wedge^{2}V \otimes \cO_{S \times \PP^{1}} \stackrel{F_{0}:=\wedge^{2}G}{\longrightarrow} \cO_{S \times \PP^{1}}(2)$ where 
\[
	[F_{0}]_{I} = \begin{cases}
	\lambda (b_{i}s^{2} - a_{i}t^{2}), & I = \{0, i\}\\
	\lambda^{2} (a_{i}b_{j} - a_{j}b_{i})st, & I = \{i, j\}, 0 \notin I
	\end{cases}.
\]
Thus if we take the map 
\[
	\lambda^{2}V \otimes \cO_{S \times \PP^{1}}
	\stackrel{F_{1}}{\to} \cO_{S \times \PP^{1}}(2)(-\bY_{0}^{1}), 
\]
it is given by 
\[
	[F_{1}]_{I} = \begin{cases}
	(b_{i}s^{2} - a_{i}t^{2}), & I = \{0, i\}\\
	\lambda (a_{i}b_{j} - a_{j}b_{i})st, & I = \{i, j\}, 0 \notin I
	\end{cases}.
\]
When $\lambda = 0$, we could recover the modified map $G'(0)$ so that $\wedge^{2}G'(0) = F_{1}(0)$, which is 
\[
	V \otimes \cO_{S(0) \times \PP^{1}} 
	\stackrel{G'(0)}{\longrightarrow} \cO_{S(0) \times \PP^{1}}
	\oplus \cO_{S(0) \times \PP^{1}}(2),
\]
and 
\[	
	G'(0) = \left[\begin{array}{ccccc}
	1 & 0 & 0& \cdots & 0\\
	0 & b_{1}s^{2} - a_{1}t^{2} & b_{2}s^{2} - a_{2}t^{2} &\cdots
	& b_{n}s^{2} - a_{n}t^{2}
	\end{array}\right].
\]
Since the image has $\cO_{S(0) \times \PP^{1}}$ factor, the subbundle $\ker G'(0)$ is degenerated. Therefore the modified map is in $D_{\deg}$. 
\end{example}

\begin{example}
Let 
\begin{eqnarray*}
	h : S &\to& \PP(V^{*}\otimes \mathfrak{gl}_{2})\\
	\lambda & \mapsto & \left[\begin{array}{cc}
	x_{0} & \lambda(\sum_{i=2}^{n}a_{i}x_{i})\\
	\lambda(\sum_{i=2}^{n}b_{i}x_{i}) & x_{1}\end{array}\right]
\end{eqnarray*}
be a family over a small disk $S$. The associated map $V \otimes \cO_{S \times \PP^{1}}\stackrel{H}{\to} 2\cO_{S \times \PP^{1}}(1)$ is given by 
\[
	H = \left[\begin{array}{ccccc}
	s &0 & \lambda a_{2}t & \cdots & \lambda a_{n}t \\
	0 & t & \lambda b_{2}s & \cdots & \lambda b_{n}s
	\end{array}\right].
\]
For $F_{0}:= \wedge^{2}H$, 
\[
	[F_{0}]_{I} = \begin{cases}
	st, & I = \{0, 1\}\\
	\lambda b_{i}s^{2}, & I = \{0, i\}, i \ge 2\\
	-\lambda a_{i}t^{2}, & I = \{1, i\}, i \ge 2\\
	\lambda^{2}(a_{i}b_{j}- a_{j}b_{i}), & I = \{i, j\}, i, j \ge 2.
	\end{cases}
\]
When $\lambda = 0$, $H$ is not surjective at two points $[0:1]$ and $[1:0]$, and except those two points, the map is constant. 

At $\lambda = s = 0$, take the blow-up and let $E_{1}\cong \PP^{1}$ be the exceptional divisor. On $E_{1}$ (with homogeneous coordinate $[s:\lambda]$), we obtain an extended degree one map $F_{2}^{1}(0)$ given by 
\[
	[F_{2}^{1}(0)]_{I} = \begin{cases}
	s, & I = \{0, 1\}\\
	-\lambda a_{i}, & I = \{1, i\}, i \ge 2\\
	0, & \mbox{ otherwise}
	\end{cases}.
\]
Thus the map $H^{1}(0) : V\otimes \cO_{\PP^{1}}\to \cO_{\PP^{1}}(1) \oplus \cO_{\PP^{1}}$ so that $\wedge^{2}H^{1}(0) = F_{2}^{1}(0)$, is given by 
\[
	H^{1}(0) = \left[\begin{array}{ccccc}
	s & 0 & a_{2}\lambda &\cdots &a_{n}\lambda \\
	0 & 1 &0 &\cdots &0
	\end{array}\right].
\]
Similarly, at $\lambda = t = 0$, we can compute the map $H^{2}(0)$ on the second exceptional divisor $E_{2}$:
\[
	H^{2}(0) = \left[\begin{array}{ccccc}
	1 & 0 & 0& \cdots &0\\
	0 & t & b_{2} & \cdots & b_{n}
	\end{array}\right].
\]
The stabilization at $\lambda = 0$ contracts central constant component. Therefore we obtain a limit $\lambda \to 0$ which is a stable map from a nodal curve.
\end{example}

The proof of the theorem and the above two examples shows the following corollary. 

\begin{corollary}\label{cor:partial}
\begin{enumerate}
\item The partial desingularization map $d : \bM \to \bK$ contracts $\Delta$ and $D_{\deg}$. For a point $[2H]$ of $d(D_{\deg}) \cong \PP V^{*}$, its fiber is isomorphic to the moduli space of stable maps $\overline{\rM}_{0,0}(\PP H, 2)$.
\item The second step of the partial desingularization $c : \bM \to \bX^{1}\git G$ contracts $\Delta$. This morphism maps a singular stable map to the union of two hyperplanes in $\PP V$ where each of them is the envelope of the irreducible component of the stable map. For a general point $[H \cup H']$ of $c(\Delta)$, the fiber is $(\PP (H \cap H')^{*})^{2}$.
\end{enumerate}
\end{corollary}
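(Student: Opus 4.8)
The plan is to deduce both statements from the explicit fiber computations of the two quotient blow-downs $\overline{\pi}_{1}$ and $\overline{\pi}_{2}$ carried out in Section \ref{ssec:partialdesing}, transported to $\bM$ through the isomorphism $\overline{\Psi} : \bX^{2}/G \to \bM$ of Theorem \ref{thm:Kirwandes}. Under this identification $d = \overline{\pi}_{1}\circ \overline{\pi}_{2} = \overline{\pi}$ and $c = \overline{\pi}_{2}$, so both are regular birational contractions, and what remains is to match their exceptional loci with $\Delta$ and $D_{\deg}$ and to read off the fibers. The precise images and fibers are already recorded in Section \ref{ssec:partialdesing}; the divisorial nature of the exceptional loci will be certified by the two explicit degenerations above.

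For part (1), the exceptional divisor of the first blow-up lies over the scalar stratum $\bY_{0} = \{[g\cdot \mathrm{Id}]\}$. Composing with the determinant map $\det : \bK \to \PP(\Sym^{2}V^{*})$ of Section \ref{ssec:Kroneckerdblcover} sends $[g\cdot \mathrm{Id}]$ to the rank one quadric $[g^{2}]$, so $d$ collapses this divisor onto $\{[g^{2}]\} \cong \PP V^{*}$, the locus of double hyperplanes $[2H]$. Over such a point the fiber is, by Section \ref{ssec:partialdesing}, the Kirwan partial desingularization of $\PP(H\otimes \mathfrak{sl}_{2})\git \SL_{2}$, which by \cite[Theorem 4.1]{Kie07} is $\overline{\rM}_{0,0}(\PP H, 2)$ for the $n$-dimensional quotient $H = V^{*}/\langle g\rangle$. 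To certify that the exceptional divisor is $D_{\deg}$ itself I would invoke the first example above: a one-parameter family degenerating to $g\cdot \mathrm{Id}$ produces, after the elementary modification, a limit whose associated subbundle degenerates, hence a stable map in $D_{\deg}$; the dimension count $(3n-4) + n = 4n-4 = \dim \bM - 1$ then upgrades this inclusion to an equality. Finally, since $\overline{\pi}_{2}$ already contracts $\Delta$ by part (2), the composite $d$ contracts both $\Delta$ and $D_{\deg}$.

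For part (2), the map $c = \overline{\pi}_{2}$ is the single blow-down of the proper transform of $\overline{\bY}_{1}$, whose closed points are the non-scalar diagonal matrices $\mathrm{diag}(g, k)$; under $\det$ these go to the rank two quadrics $[gk] = [H\cup H']$ with $H = \{g = 0\}$ and $H' = \{k = 0\}$, which is the asserted union of two hyperplanes. The fiber over such a point is again read from Section \ref{ssec:partialdesing}: it is $\PP(K\otimes \langle e, f\rangle)\git \CC^{*}\ltimes \ZZ_{2} \cong (\PP K)^{2}$ with $K = V^{*}/\langle g, k\rangle \cong (\ker g \cap \ker k)^{*} = (H\cap H')^{*}$, giving $(\PP(H\cap H')^{*})^{2}$ once we use that the residual $\ZZ_{2}$ acts trivially on the projectivized normal cone. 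The second example above supplies both the contraction of $\Delta$ and the envelope description: degenerating $\mathrm{diag}(g, k)$ through off-diagonal terms yields a nodal stable map whose normal forms $H^{1}(0)$ and $H^{2}(0)$ exhibit the two line components sweeping out $\PP(\ker g)$ and $\PP(\ker k)$, so each component's linear envelope in $\PP V$ is exactly $H$, resp.\ $H'$; in particular the limit lies in $\Delta$, and the same dimension bookkeeping forces the exceptional divisor to equal $\Delta$.

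The main obstacle, and the only step requiring genuine geometric input beyond transcription, is the envelope claim in part (2): one must check that the two components produced by the modification span precisely the hyperplanes cut out by $g$ and $k$, and not some other pair of $\PP^{n-1}$'s. This is exactly what reading the degree one pieces of the normal forms $H^{1}(0)$ and $H^{2}(0)$ in the second example accomplishes. Everything else is a direct translation of the GIT fiber computations of Section \ref{ssec:partialdesing} through the isomorphism of Theorem \ref{thm:Kirwandes}, together with the two dimension counts that promote the set-theoretic inclusions to the equalities identifying the two exceptional divisors with $D_{\deg}$ and $\Delta$.
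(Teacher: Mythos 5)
Your proposal is correct and takes essentially the same route as the paper: the paper's (very terse) proof is precisely to combine the proof of Theorem \ref{thm:Kirwandes} with the GIT fiber computations of Section \ref{ssec:partialdesing} and the two elementary-modification examples, exactly as you do. The extra details you supply---the determinant-map identification of $d(D_{\deg})\cong \PP V^{*}$, the identification $K \cong (\ker g \cap \ker k)^{*}$, and the dimension counts upgrading the set-theoretic inclusions to equalities of the exceptional divisors with $D_{\deg}$ and $\Delta$---are sound and simply make explicit what the paper leaves implicit.
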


\begin{remark}\label{rem:intermtoquasimap}
Note that in item (2), the image $c(f)$ of a stable map $f : C_{1} \cup C_{2} \to \Gr(n-1, V)$ remembers not only $f(C_{1} \cap C_{2})$, but also $\langle f(C_{i})\rangle \subset \PP V$. On the other hand, the morphism $\bM \to \bU$ in Definition \ref{def:quasimap} maps a singular stable map $f : C_{1} \cup C_{2} \to \Gr(n-1, V)$ to a point $C_{1} \cap C_{2} \in \Gr(n-1, V)$. By rigidity lemma, we obtain a morphism $\bX^{1}\git G \to \bU$. 
\end{remark}


\section{Mori's program}\label{sec:Moriprogram}

In this section, we prove our main theorem (Theorem \ref{thm:mainintro}) and complete Mori's program for $\bM := \MzzGrt$. 

We start with a simple but useful observation. 

\begin{lemma}\label{lem:duality}
Let $\Phi : \bM := \MzzGrt \to \overline{\rM}_{0,0}(\Gr(2, V^{*}), 2)$ be an isomorphism induced by $\phi : \Gr(n-1, V) \cong \Gr(2, V^{*})$. Then $\Phi_{*}$ induces a reflection along the vertical line connecting $\Delta$ and $P$ in Figure \ref{fig:stablebaselocusdecomposition}. In other words, $\Phi_{*}(D_{\deg}) = D_{\mathrm{unb}}$, $\Phi_{*}(D_{\mathrm{unb}}) = D_{\deg}$, $\Phi_{*}(H_{\sigma_{1, 1}}) = H_{\sigma_{2}}$, $\Phi_{*}(H_{\sigma_{2}}) = H_{\sigma_{1,1}}$, and $\Phi_{*}(\Delta) = \Delta$. 
\end{lemma}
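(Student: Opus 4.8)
The plan is to exploit that $\phi$ is the annihilator duality $W \mapsto W^{\perp}$ carrying an $(n-1)$-plane $W \subset V$ to the $2$-plane $W^{\perp} \subset V^{*}$, that $\Phi$ is post-composition $f \mapsto \phi \circ f$, and that $\rN^{1}(\bM)$ has rank $3$ with basis $\{\Delta, H_{\sigma_{1,1}}, H_{\sigma_{2}}\}$. So it suffices to compute $\Phi_{*}$ on this basis; the resulting equalities, together with the fact that $\Phi_{*}$ is a linear involution (as $\phi^{-1}$ is again an annihilator duality), then pin down the whole reflection. The two structural facts I would use repeatedly are: (i) taking annihilators exchanges the span $\sum_{t} W_{t}$ of a family of subspaces with the intersection $\bigcap_{t} W_{t}^{\perp}$ of the dual family, with complementary dimensions; and (ii) Grassmannian duality transposes Young diagrams, so $\sigma_{2}$ and $\sigma_{1,1}$ are interchanged.

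First I would dispose of $\Delta$. Membership in $\Delta$ is a condition on the domain curve alone (reducibility of the source), and $\Phi$ only alters a stable map by post-composing with the isomorphism $\phi$, leaving the domain untouched; hence $\Phi(\Delta) = \Delta$ set-theoretically and $\Phi_{*}(\Delta) = \Delta$. Next, for the $H$'s, a fixed codimension-two Schubert variety $\Sigma_{1,1}$ (resp. $\Sigma_{2}$) in $\Gr(n-1, V)$ is carried by $\phi$ to a Schubert variety of class $\sigma_{2}$ (resp. $\sigma_{1,1}$) in $\Gr(2, V^{*})$ by fact (ii); since $H_{\sigma_{1,1}}$ (resp. $H_{\sigma_{2}}$) is the locus of conics meeting such a Schubert variety, its image is the corresponding locus for the transposed class, giving $\Phi_{*}(H_{\sigma_{1,1}}) = H_{\sigma_{2}}$ and $\Phi_{*}(H_{\sigma_{2}}) = H_{\sigma_{1,1}}$.

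The heart of the argument is the exchange $D_{\deg} \leftrightarrow D_{\mathrm{unb}}$, which I would establish directly from the definitions. Writing $\{E_{t} = f(t)\}$ for the family of $(n-1)$-planes of a conic $f$, its dual $\phi \circ f$ has $2$-planes $\{E_{t}^{\perp}\}$. By fact (i), the span $\sum_{t} E_{t}$ fails to be all of $V$ exactly when $\bigcap_{t} E_{t}^{\perp} \neq 0$, i.e. when the dual $2$-planes share a common vector; and a common vector in all fibers is equivalent to the induced subbundle of $\phi \circ f$ splitting as $\cO_{\PP^{1}} \oplus \cO_{\PP^{1}}(-2)$, which is precisely the unbalanced condition. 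For $k = n-1$ the first locus is $D_{\deg}$ and for $k = 2$ the second is $D_{\mathrm{unb}}$, so $\Phi_{*}(D_{\deg}) = D_{\mathrm{unb}}$. For the reverse, the generic splitting $2\cO_{\PP^{1}}(-1) \oplus (n-3)\cO_{\PP^{1}}$ of the subbundle of $f$ produces the common $(n-3)$-dimensional subspace $V_{E'} = \bigcap_{t} E_{t}$; its annihilator $V_{E'}^{\perp} = \sum_{t} E_{t}^{\perp}$ is the $4$-dimensional span of the dual $2$-planes, and $V_{E'} \cap W \neq 0$ translates, via $\tilde{W} := W^{\perp}$ and the dimension identity $\dim V_{E'}^{\perp} + \dim \tilde{W} = n+1$, into $V_{E'}^{\perp}$ projecting to a proper subspace of $V^{*}/\tilde{W}$. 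The former locus is $D_{\mathrm{unb}}$ (for $k = n-1$, with fixed $4$-plane $W$) and the latter is $D_{\deg}$ (for $k = 2$, with fixed $(n-3)$-plane $\tilde{W}$), so $\Phi_{*}(D_{\mathrm{unb}}) = D_{\deg}$.

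Having determined $\Phi_{*}$ on the basis, I would conclude that it is the claimed reflection, and note that $T$ and $P$ are fixed: the Pl\"ucker-span map $p$ is duality-equivariant under the identification $\PP(\wedge^{n-1}V) \cong \PP(\wedge^{2}V^{*})$ used earlier, so $\Phi_{*}(P) = P$, and the tangency class $T$ is likewise fixed, since the Schubert divisor $\Sigma_{1}$ is its own transpose; both therefore lie on the reflection axis through $\Delta$ and $P$. The main obstacle is the bookkeeping in the $D_{\deg} \leftrightarrow D_{\mathrm{unb}}$ step, because these two divisors are defined asymmetrically for $k = 2$ versus $k = n-1$: the auxiliary subspaces have different dimensions ($4$ against $n-3$), and one must check that the subbundle-splitting description on one side corresponds exactly to the span/projection description on the other, with the fixed data matched by $\tilde{W} = W^{\perp}$.
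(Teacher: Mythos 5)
Your proposal is correct, and its backbone coincides with the paper's: both proofs rest on the fact that $\phi$ induces an isomorphism $\rA^{*}(\Gr(n-1,V)) \cong \rA^{*}(\Gr(2,V^{*}))$ transposing Young diagrams, which gives $\Phi_{*}(H_{\sigma_{1,1}}) = H_{\sigma_{2}}$, $\Phi_{*}(H_{\sigma_{2}}) = H_{\sigma_{1,1}}$, and $\Phi_{*}(\Delta) = \Delta$ (the last being clear since $\Phi$ does not alter domain curves). Where you genuinely diverge is the exchange $D_{\deg} \leftrightarrow D_{\mathrm{unb}}$: the paper's proof is a single sentence that leaves this implicit, relying on the transposition of the dual-partition classes $\sigma_{(1,1)^{*}}, \sigma_{(2)^{*}}$ together with the expressions of $D_{\deg}$ and $D_{\mathrm{unb}}$ in the basis $\{\Delta, H_{\sigma_{1,1}}, H_{\sigma_{2}}\}$ from \cite{CS06} and \cite[Lemma 3.4]{CC10}, which are symmetric under that transposition. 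You instead verify the exchange directly from Definition \ref{def:divisorclasses} via the annihilator correspondence: for $k=n-1$ the span of the planes $E_{t}$ is proper exactly when $\bigcap_{t}E_{t}^{\perp} \neq 0$, i.e.\ when the dual subbundle is $\cO_{\PP^{1}} \oplus \cO_{\PP^{1}}(-2)$; and for the reverse direction you correctly identify $V_{E'} = \bigcap_{t}E_{t}$ generically, set $\tilde{W} = W^{\perp}$, and translate $V_{E'} \cap W \neq 0$ into the properness of the projection of $\sum_{t}E_{t}^{\perp}$ modulo $\tilde{W}$, matching the asymmetric auxiliary data ($4$-plane versus $(n-3)$-plane) exactly. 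This buys a self-contained, definition-level argument that never invokes the divisor class formulas of \cite{CS06, CC10}, at the cost of the bookkeeping you acknowledge; the paper's route is shorter but presupposes those formulas. Your closing observation that $T$ and $P$ lie on the reflection axis is correct and consistent with the figure, though not needed for the stated equalities.
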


\begin{proof}
It follows from the induced isomorphism $\phi_{*} : \rA^{*}(\Gr(n-1, V)) \to \rA^{*}(\Gr(2, V^{*}))$ such that $\phi_{*}(\sigma_{1,1}) = \sigma_{2}$, $\phi_{*}(\sigma_{2}) = \sigma_{1,1}$, $\phi_{*}(\sigma_{(1,1)^{*}}) = \sigma_{(2)^{*}}$, and $\phi_{*}(\sigma_{(2)^{*}}) = \sigma_{(1,1)^{*}}$. $\Phi_{*}(\Delta) = \Delta$ is clear.
\end{proof}

\subsection{$n = 3$ case}

The first non-trivial case is $n = 3$, where $\Gr(n-1, V) = \Gr(2, 4)$. In this case, because of the self-duality of $\Gr(2, 4)$, the complete description is particularly clear. Essentially all of birational models in this case have been described in \cite{CC10, CM16}. For the reader's convenience, we leave the statement and references. 

\begin{theorem}\label{thm:n=3case}
Let $V$ be a vector space of dimension 4. For an effective divisor $D$ on $\bM := \overline{\rM}_{0,0}(\Gr(2, V), 2)$, 
\begin{enumerate}
\item If $D = a H_{\sigma_{1,1}} + bH_{\sigma_{2}} + cT$ for $a, b, c > 0$, then $\bM(D) \cong \bM$.
\item If $D = aH_{\sigma_{1, 1}} + bH_{\sigma_{2}} + cP$ for $a, b, c > 0$, then $\bM(D) \cong \bH := \Hilb^{2m+1}(\Gr(2, V))$. 
\item If $D = aH_{\sigma_{2}} + bD_{\deg} + c\Delta$ for $a > 0$ and $b, c \ge 0$, then $\bM(D) \cong \bK := \PP(V^{*}\otimes \mathfrak{gl}_{2})\git \SL_{2} \times \SL_{2} = \rM_{\PP V}(m^{2}+3m+2) \cong T_{4}$.
\item If $D = aH_{\sigma_{2}} + bT + c\Delta$ for $a, b > 0$ and $c \ge 0$, then $\bM(D) \cong \bX^{1}\git \SL_{2} \times \SL_{2}$, the intermediate space of the partial desingularization of $\bK$.
\item If $D = aH_{\sigma_{2}} + bP + cD_{\deg}$ for $a, b > 0$ and $c \ge 0$, then $\bM(D) \cong \Bl_{\mathrm{OG}(3, \wedge^{2}V)_{\sigma_{(1,1)^{*}}}}\Gr(3, \wedge^{2}V)$. 
\item If $D = aP + bD_{\mathrm{unb}}  + cD_{\deg}$ for $a > 0$ and $b, c \ge 0$, then $\bM(D) \cong \Gr(3, \wedge^{2}V) \cong \Gr(3, \wedge^{2}V^{*})$. 
\item If $D = aH_{\sigma_{1,1}} + bP  + cD_{\mathrm{unb}}$ for $a, b > 0$ and $c \ge 0$, then $\bM(D) \cong \Bl_{\mathrm{OG}(3, \wedge^{2}V)_{\sigma_{(2)^{*}}}}\Gr(3, \wedge^{2}V) \cong \Bl_{\mathrm{OG}(3, \wedge^{2}V^{*})_{\sigma(1,1)^{*}}}\Gr(3, \wedge^{2}V^{*})$. 
\item If $D = aH_{\sigma_{1,1}} + bD_{\mathrm{unb}} + c\Delta$ for $a > 0$ and $b , c \ge 0$, then $\bM(D) \cong \bK^{*} := \PP(V \otimes \mathfrak{gl}_{2})\git \SL_{2} \times \SL_{2} = \rM_{\PP V^{*}}(m^{2}+3m+2)$.
\item If $D = aH_{\sigma_{1,1}} + bT + c\Delta$ for $a, b > 0$ and $c\ge 0$, then $\bM(D) = (\bX^{1}\git \SL_{2} \times \SL_{2})^{*}$, the intermediate space of the partial desingularization of $\bK^{*}$.
\item If $D = aT + b\Delta$ for $a > 0$ and $b \ge 0$, then $\bM(D) \cong \bU := \overline{\mathrm{U}}_{0,0}(\Gr(2, V), 2)$.
\item If $D = aH_{\sigma_{1,1}} + bH_{\sigma_{2}}$ for $a, b > 0$, then $\bM(D) \cong \bC := \mathrm{Chow}_{1,2}(\Gr(2, V))^{\nu}$. 
\item If $D$ is on the boundary of $\Eff(\bM)$, then $\bM(D)$ is a point.
\end{enumerate}
\end{theorem}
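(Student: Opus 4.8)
The plan is to start from the stable base locus decomposition of Chen--Coskun (the eight chambers of Figure \ref{fig:stablebaselocusdecomposition}), refine it to the full Mori chamber decomposition, and attach the asserted model to each cone, throughout halving the labor by the self-duality of Lemma \ref{lem:duality}. Since $\bM$ is a Mori dream space (\cite[Corollary 1.2]{CC10}) with simplicial effective cone spanned by $D_{\mathrm{unb}}, D_{\deg}, \Delta$, the Mori chamber decomposition is a finite rational polyhedral fan refining the stable base locus decomposition, and it is enough to name the model on each chamber and face and to locate the one wall at which the refinement actually occurs.

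First I would pin down the ample chamber, case (1), whose closure is the triangle $\langle H_{\sigma_{1,1}}, H_{\sigma_2}, T\rangle$. The three rays are nef: $H_{\sigma_{1,1}}$ and $H_{\sigma_2}$ are pulled back along the contraction $\bM\to\bC$ to the Chow variety, while $T$ is pulled back along $\bM\to\bU$ of Definition \ref{def:quasimap}, so that $\bM(T)\cong\bU$. To see they span the whole nef cone I would pair each of the three walls with a curve class from Definition \ref{def:curveclasses} that vanishes on it and is positive in the interior, reading the intersection numbers from \cite[Table 1]{CC10}. This simultaneously identifies the three walls with the flip $\bM\dashrightarrow\bH$ over $\bC$ on $\langle H_{\sigma_{1,1}},H_{\sigma_2}\rangle$ (case (11)) and the two divisorial contractions $\bM\to\bX^{1}\git G$, $\bM\to(\bX^{1}\git G)^{*}$ on $\langle H_{\sigma_2},T\rangle$ and $\langle H_{\sigma_{1,1}},T\rangle$ (Corollary \ref{cor:partial}), each contracting $\Delta$.

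Next I would match each remaining stable base locus chamber with its model and verify the chamber by a pullback computation. The envelope map $p:\bM\dashrightarrow\Gr(3,\wedge^{2}V)$ with $P=p^{*}\cO(1)$ (Definition \ref{def:divisorclasses}) gives case (6); for $n=3$ one has $\bG=\overline{\bG}=\Gr(3,\wedge^{2}V)$, and the morphism $\bH\to\Gr(3,\wedge^{2}V)$ factors through the two honest blow-ups along $\mathrm{OG}(3,\wedge^{2}V)_{\sigma_{(1,1)^{*}}}$ and $\mathrm{OG}(3,\wedge^{2}V)_{\sigma_{(2)^{*}}}$ (equation \eqref{eqn:HilbblowupGr} and Section \ref{ssec:determinantalvarieties}), yielding cases (5), (7) and, recombined, $\bH$ itself in case (2). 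The partial desingularization $\bM\to\bX^{1}\git G\to\bK$ of Theorem \ref{thm:Kirwandes} supplies cases (3), (4). For each such $g:\bM\dashrightarrow M'$ I would express $g^{*}(\text{ample})$ in the basis $\{H_{\sigma_{1,1}}, H_{\sigma_2},\Delta\}$ by intersecting with $C_{1},C_{2},C_{5},C_{6},C_{7},C_{8}$, so that the chamber of $M'$ is precisely the cone on which the curves contracted by $g$ vanish. Lemma \ref{lem:duality} then reduces cases (7), (8), (9) to (5), (3), (4), since $\Phi_{*}$ fixes $T,P,\Delta$ and swaps $H_{\sigma_{1,1}}\leftrightarrow H_{\sigma_2}$, $D_{\mathrm{unb}}\leftrightarrow D_{\deg}$, carrying $\bK$ to $\bK^{*}$ and each single blow-up of $\Gr(3,\wedge^{2}V)$ to the other.

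The heart of the argument, and the step I expect to be the main obstacle, is showing that the Mori chamber decomposition refines the stable base locus decomposition at exactly one wall. Comparing the eight chambers of Figure \ref{fig:stablebaselocusdecomposition} with the nine two-dimensional Mori chambers shows that exactly one stable base locus chamber, the top-central one, is non-convex and so must split along the wall $\langle T,\Delta\rangle$ into the two dual models $\bX^{1}\git G$ (case (4)) and $(\bX^{1}\git G)^{*}$ (case (9)); here the stable base locus stays constant while the birational model changes, the two sides sharing the common contraction $\bU$ (case (10)) by Remark \ref{rem:intermtoquasimap}. I must therefore prove both that this chamber genuinely splits---i.e.\ that $\bX^{1}\git G$ and its dual are distinct models over $\bM$ separated by $\langle T,\Delta\rangle$, which follows from the partial desingularization together with the involution $\Phi$---and that none of the other seven chambers splits further, which amounts to checking that the models above extend across each with no intervening flip. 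Finally, for case (12) I would observe that each class on $\partial\Eff(\bM)$ lies on a wall where a divisorial contraction has already collapsed its model onto a lower-dimensional image and that a curve computation shows no positive multiple of a boundary class moves, so $\bM(D)$ is a point.
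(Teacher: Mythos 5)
Your proposal is correct in substance and runs on the same machinery as the paper's proof: the Mori dream space property from \cite[Corollary 1.2]{CC10}, the identification of models with known contractions (the partial desingularization of Theorem \ref{thm:Kirwandes} and Corollary \ref{cor:partial}, the Hilbert scheme as the double blow-up \eqref{eqn:HilbblowupGr}, the envelope, Chow, and $\bU$ contractions), the duality involution of Lemma \ref{lem:duality}, and the extension of divisor ranges by adding exceptional classes. The difference is one of route and organization. The paper's actual proof is a short citation-based assembly: items (1), (2), (5), (6), (7), (10), (11) are quoted from \cite[Proposition 3.7, Theorem 3.8, Theorem 3.10]{CC10}, items (3), (4) from \cite[Remark 6.7]{CM16}, items (8), (9) follow by applying $\Phi_{*}$, and the divisor ranges are extended via $\bM(D)\cong\bM(D+E)$ for $E$ exceptional. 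You instead reconstruct the entire Mori fan from the stable base locus decomposition, verifying each chamber by curve--divisor intersections, and you make explicit a point the paper leaves implicit: the top chamber of Figure \ref{fig:stablebaselocusdecomposition}, the quadrilateral on $H_{\sigma_{1,1}}, T, H_{\sigma_{2}}, \Delta$, is non-convex, hence (Mori chambers being convex) must split, and it splits precisely along $\langle T,\Delta\rangle$ into the chambers of $\bX^{1}\git\SL_{2}\times\SL_{2}$ and its dual, glued along the common contraction to $\bU$ of Remark \ref{rem:intermtoquasimap}. That convexity observation is a clean way to see the refinement occurs exactly once, i.e.\ the $8$-versus-$9$ chamber count. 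What the paper's route buys is brevity; what yours buys is self-containedness, at the cost of having to re-verify the model identifications that \cite{CC10} already supplies.

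The one step where your plan is genuinely weaker than the paper's is item (12). The claim that ``a curve computation shows no positive multiple of a boundary class moves'' is not straightforward to execute: rigidity of each generator $\Delta$, $D_{\mathrm{unb}}$, $D_{\deg}$ does not by itself give $h^{0}(m(aD_{\mathrm{unb}}+b\Delta))=1$, and the naive version of the covering-curve argument (a connecting family with zero intersection) proves too much, contradicting effectivity of the class itself, so it must be replaced by a more careful fixed-part induction. The paper's argument is cleaner and is already available inside your framework: each boundary face of $\Eff(\bM)$ is a face of the closed Mori chamber of $\bK$, $\bK^{*}$, or $\Gr(3,\wedge^{2}V)$, so $\bM(D)$ is a contraction of one of these three normal varieties; it cannot be an isomorphism because $D$ is not big, and since each of the three has Picard number one, the only proper contraction available is to a point. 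You should substitute this for your final sentence.
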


\begin{proof}
Items (1), (2), (5), (6), (7), (10), (11) are from \cite[Proposition 3.7, Theorem 3.8, Theorem 3.10]{CC10}. In \cite{CC10}, the range of the divisors giving each model was not stated explicitly. However, since $\bM(D) \cong \bM(D+ E)$ if $E$ is an exceptional divisor of the rational contraction $\bM \dashrightarrow \bM(D)$, it is straightforward to extend the range of divisors. Items (3), (4) are from \cite[Remark 6.7]{CM16}. Items (8), (9) are obtained by the duality map $\Phi$ in Lemma \ref{lem:duality}. Note that for any divisor $D$ on the boundary of $\Eff(\bM)$, $\bM(D)$ is a contraction with positive dimensional fibers of one of normal varieties $\bK$, $\bK^{*}$, and $\Gr(3, \wedge^{2}V)$. Thus it has to be a point since those three varieties have Picard number one.
\end{proof}

\subsection{Relative moduli spaces and its contractions}

When $n > 3$, Mori program for $\bM$ is more complicated. For instance, the movable cone is larger than the case of $n = 3$. To extend Theorem \ref{thm:n=3case}, we need to introduce more birational models of $\bM$. In this section, we introduce new models from the viewpoint of relative moduli spaces. 

The construction of many moduli spaces in Sections \ref{sec:birationalmodels} and \ref{sec:Kronecker} can be relativized. Let $\cS$ be the rank 4 tautological subbundle over $\Gr(4, V^{*})$. Consider the rank 2 Grassmannian bundle $\Gr(2, \cS)$ over $\Gr(4, V^{*})$. 

\begin{definition}\label{def:relativestablemap}
Let $\bM_{\cS} := \overline{\rM}_{0,0}(\Gr(2, \cS), 2)$ be the relative moduli space of stable maps to the Grassmannian bundle. This is a Zariski locally trivial bundle over $\Gr(4, V^{*})$ whose fiber over $S \in \Gr(4, V^{*})$ is $\overline{\rM}_{0,0}(\Gr(2, S), 2)$. 
\end{definition}

There is a functorial morphism 
\[
	r_{M} : \overline{\rM}_{0,0}(\Gr(2, \cS), 2) \to 
	\overline{\rM}_{0,0}(\Gr(2, V^{*}), 2) \cong \bM.
\]
This map is surjective since any degree 2 stable map to $\Gr(2, V^{*})$ factors through $\Gr(2, S)$ for some $S \subset V^{*}$ with $\dim S = 4$. Furthermore, $r$ is not injective precisely on the locus of stable maps whose linear envelope is not 3 dimensional. Thus the exceptional set is the $D_{\deg}$-bundle over $\Gr(4, V^{*})$, which is a divisor. We denote this divisor by $D_{\deg, \cS}$. Then $D_{\deg, \cS}$ is a $\PP^{n-3}$-bundle over $r_{M}(D_{\deg, \cS})$. 

\begin{definition}\label{def:relativesheaves}
Let $\bK_{\cS} := \rM_{\PP \cS}(m^{2}+3m+2)$ be the relative moduli space of semistable sheaves over $\Gr(4, V^{*})$. For each $S \in \Gr(4, V^{*})$, the fiber is $\rM_{\PP S}(m^{2}+3m+2) \cong \PP(S^{*}\otimes \mathfrak{gl}_{2})\git \SL_{2} \times \SL_{2}$. 
\end{definition}

This moduli space can be constructed as an $\SL_{2}\times \SL_{2}$-GIT quotient of the projective space bundle $\PP(\cS^{*}\otimes \mathfrak{gl}_{2})$ over $\Gr(4, V^{*})$. 

There is also a functorial morphism 
\[
	r_{K} : \bK_{\cS} = \rM_{\PP \cS}(m^{2}+3m+2) \to 
	\rM_{\PP V^{*}}(m^{2}+3m+2).
\]
For $S \in \Gr(4, V^{*})$ and $[F] \in \rM_{\PP \cS}(m^{2}+3m+2)|_{S} \cong \rM_{\PP S}(m^{2}+3m+2)$, $r_{K}([F]) = [i_{*}F]$ for $i : \PP S \hookrightarrow \PP V^{*}$. Then $r_{K}$ contracts the locus of $[2\cO_{H}]$ and the fiber of an exceptional point in $\rM_{\PP V^{*}}(m^{2}+3m+2)$ is $\PP^{n-3}$. 

The map $r_{K}$ is not surjective. For instance, $\rM_{\PP V^{*}}(m^{2}+3m+2)$ has an extra component isomorphic to $\Sym^{2}\Gr(3, V^{*})$ which parametrizes $S$-equivalent classes of $\cO_{H}\oplus \cO_{H'}$ for a structure sheaf of a pair of planes. $r_{K}(\bK_{\cS})$ is the closure of the locus of semistable sheaves supported on a smooth quadric surface. 

\begin{definition}\label{def:bL}
Let $\bL$ be the normalization of the image of $r_{K}$ in $\rM_{\PP V^{*}}(m^{2}+3m+2)$. Since $r_{K}$ has connected fibers, $\bL$ is bijective to $r_{K}(\bK_{\cS})$.
\end{definition}

We can relativize the partial desingularization process and obtain a morphism 
\[
	d_{\cS} : \bM_{\cS} \to \bK_{\cS}.
\]
Let $\bX^{1}_{\cS}\git \SL_{2} \times \SL_{2}$ be the intermediate space of the relative partial desingularization.

\begin{lemma}\label{lem:contL}
There is a contraction $\overline{d} : \bM \cong \overline{\rM}_{0,0}(\Gr(2, V^{*}), 2) \to \bL$ which contracts two curve classes $C_{2}$ and $C_{7}$. In particular, $\overline{d}$ contracts $D_{\mathrm{unb}}$ and $\Delta$. 
\end{lemma}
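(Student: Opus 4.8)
The plan is to construct $\overline{d}$ as a descent, through the relative contraction $r_{M} : \bM_{\cS} \to \bM$, of the composite $r_{K} \circ d_{\cS} : \bM_{\cS} \to \rM_{\PP V^{*}}(m^{2}+3m+2)$. The motivating picture is that a general conic in $\Gr(2, V^{*})$ has a unique four-dimensional linear envelope $S \subset V^{*}$, hence determines a sheaf $I_{L, Q}(1)$ on a smooth quadric surface $Q \subset \PP S \subset \PP V^{*}$; this is the naive rational map $\bM \dashrightarrow \bL$, whose indeterminacy sits exactly along the locus of envelope-degenerate conics (the locus corresponding to $D_{\mathrm{unb}}$ on $\bM$ under Lemma \ref{lem:duality}). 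Passing to $\bM_{\cS}$ resolves this: $r_{M}$ is an isomorphism away from that locus, and $r_{K} \circ d_{\cS}$ is regular everywhere.

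First I would verify the descent. The morphism $r_{M}$ is proper and birational between the normal varieties $\bM_{\cS}$ and $\bM$, so $r_{M *}\cO_{\bM_{\cS}} = \cO_{\bM}$ and its fibers are connected; the only positive-dimensional fibers are the $\PP^{n-3}$'s over the degenerate locus, parametrizing the four-dimensional $S$ with $S_{0} \subset S$, where $S_{0}$ is the three-dimensional envelope of the degenerate conic. By the rigidity lemma it then suffices to see that $r_{K}\circ d_{\cS}$ is constant along each such $\PP^{n-3}$. For a point $(S, f)$ of this fiber the conic $f$ is degenerate inside $\Gr(2, S)$, so Corollary \ref{cor:partial}(1), applied in the fiber over $S$, gives $d_{\cS}(S, f) = [2\cO_{\PP S_{0}}]$; pushing forward, $r_{K}$ sends this to $[i_{S *}2\cO_{\PP S_{0}}] = [2\cO_{\PP S_{0}}]$, which is manifestly independent of $S$. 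Since $\bM_{\cS}$ is normal and the image of $r_{K}\circ d_{\cS}$ lies in the subvariety of $\rM_{\PP V^{*}}(m^{2}+3m+2)$ whose normalization is $\bL$, the composite lifts to a morphism $\bM_{\cS} \to \bL$ and then descends to the desired $\overline{d} : \bM \to \bL$.

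Next I would identify the contracted classes. Under Lemma \ref{lem:duality} the curves $C_{2}$ and $C_{7}$ of Definition \ref{def:curveclasses} become, respectively, a pencil of conics and a one-parameter family of nodal conics lying in a single degenerate plane $\Gr(2, S_{0})$; by the computation above every member is sent to the single point $[2\cO_{\PP S_{0}}]$, so $\overline{d}$ contracts $C_{2}$ and $C_{7}$. The fiber of $\overline{d}$ over $[2\cO_{\PP S_{0}}]$ contains all conics in $\Gr(2, S_{0})$ and is thus positive-dimensional, which shows $D_{\mathrm{unb}}$ is in the exceptional locus; and $\Delta$ is contracted because $d_{\cS}$ already collapses the relative boundary (Corollary \ref{cor:partial}(1)), the image of a nodal conic recording only the unordered pair of linear envelopes of its two components, not the node or the parametrization.

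The one genuinely delicate step is the constancy along the $\PP^{n-3}$-fibers, that is, the identification $d_{\cS}(S, f) = [2\cO_{\PP S_{0}}]$ together with the $S$-independence of its pushforward; once this is in hand the rest is formal. Two bookkeeping points remain: that the image of $r_{K}\circ d_{\cS}$ really lands in the subvariety normalized by $\bL$ (so the lift exists by the universal property of normalization), and that $\overline{d}$ is a genuine birational contraction onto $\bL$. The latter holds since a general conic yields $I_{L,Q}(1)$ on a smooth quadric surface, a general point of $\bL$, and since a dimension count gives $\dim \bL = \dim \Gr(4, V^{*}) + 9 = 4n - 3 = \dim \bM$, so that $\overline{d}$ is birational and not merely dominant.
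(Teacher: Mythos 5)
Your construction of $\overline{d}$ is essentially the paper's own proof: the same commutative square through $\bM_{\cS}$, the same key computation that $r_{K}\circ d_{\cS}$ is constant on the $\PP^{n-3}$-fibers of $r_{M}$ (each pair $(f,S)$ with $f$ envelope-degenerate going to the $S$-independent point $[2\cO_{\PP S_{0}}]$), the same descent by rigidity, and the same identification of the contracted curves via Lemma \ref{lem:duality} ($C_{2},C_{7}$ on $\bM$ becoming $C_{1},C_{6}$ on the dual side). You are in fact more careful than the paper about why rigidity applies and why the map lifts to the normalization $\bL$. One harmless imprecision: in Definition \ref{def:curveclasses} the line attached in $C_{7}$ need not lie in the $\Sigma_{(2)^{*}}$-plane, so the constant image of the dual family is in general $[\cO_{\Pi_{0}}\oplus\cO_{\PP S_{0}}]$ rather than $[2\cO_{\PP S_{0}}]$; this does not affect the conclusion that the class is contracted.

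The genuine gap is your last inference, and it is a gap you share with the paper. From ``the fiber of $\overline{d}$ over $[2\cO_{\PP S_{0}}]$ is positive-dimensional'' you conclude that $D_{\mathrm{unb}}$ is in the exceptional locus, but for $n>3$ this is a non sequitur: the union of those fibers is the locus of stable maps whose image lies in a $\Sigma_{(2)^{*}}$-plane (dually, conics with $3$-dimensional envelope), which has dimension $3(n-2)+5=3n-1$, i.e.\ codimension $n-2$ in $\bM$; it equals the divisor $D_{\mathrm{unb}}$ only when $n=3$. A \emph{general} point of the divisor $D_{\mathrm{unb}}$ (the Schubert-type condition $V_{E'}\cap W\neq\{0\}$) is a smooth conic whose dual envelope $S$ is $4$-dimensional and whose associated quadric $Q\subset\PP S$ is smooth; there the sheaf $I_{L,Q}(1)$ is stable (Lemmas \ref{lem:stability}, \ref{lem:stabilitycomparison}), the partial desingularization $d_{S}$ is one-to-one over the stable locus, and the pushed-forward sheaf recovers $S$ as the span of its support, so $\overline{d}$ is injective near that point and $\overline{d}|_{D_{\mathrm{unb}}}$ is generically injective --- the divisor $D_{\mathrm{unb}}$ is \emph{not} contracted when $n>3$. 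This is also forced by internal consistency: were $D_{\mathrm{unb}}$ exceptional for $\overline{d}$, one would get $\bM(aH_{\sigma_{1,1}}+bD_{\mathrm{unb}}+c\Delta)\cong\bL$, contradicting Theorem \ref{thm:mainintro}(10), which yields $\bK_{\cS}$ with two independent semiample classes. The paper's own justification (``deformations of $C_{2}$ cover $D_{\mathrm{unb}}$'') fails for exactly the same reason, and, tellingly, only the assertions actually established --- that $\overline{d}$ contracts the classes $C_{2}$, $C_{7}$ and the divisor $\Delta$ --- are used later in the proofs of items (11) and (12). So your argument, like the paper's, proves everything that is needed downstream, but the clause about $D_{\mathrm{unb}}$ should be dropped or restated as the contraction of the locus of conics in $\Sigma_{(2)^{*}}$-planes.
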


\begin{proof}
\[
	\xymatrix{\overline{\rM}_{0,0}(\Gr(2, \cS), 2) \ar[r]^{d_{\cS}}
	\ar[d]_{r_{M}}&
	\rM_{\PP \cS}(m^{2}+3m+2) \ar[d]^{r_{K}}\\
	\overline{\rM}_{0,0}(\Gr(2, V^{*}), 2) \ar@{-->}[r]^{\overline{d}}&
	\rM_{\PP V^{*}}(m^{2}+3m+2)}
\]
By Lemma \ref{lem:duality}, $D_{\mathrm{unb}}$ on $\bM$ is identified with $D_{\deg}$ on $\overline{\rM}_{0,0}(\Gr(2, V^{*}), 2)$. For a point $f \in D_{\deg} \subset \overline{\rM}_{0,0}(\Gr(2, V^{*}), 2)$, $r_{M}^{-1}(f)$ parametrizes pairs $(f, S)$ where $S \in \Gr(4, V^{*})$ such that the linear envelope of $f$ is a $\PP^{2}$ in $\PP S$. Then $d_{\cS}(r_{M}^{-1}(f))$ parametrizes pairs $([2\cO_{H}], S)$ where $H$ is a plane in $\PP S$. Now $(r_{K}\circ d_{\cS})(r_{M}^{-1}(f))$ is $\{[2\cO_{H}]\}$. By rigidity lemma, there is a morphism $\overline{d} : \overline{\rM}_{0,0}(\Gr(2, V^{*}), 2) \to \rM_{\PP V^{*}}(m^{2}+3m+2)$. Clearly the image has $\bL$ as its normalization. 

From the description of the exceptional set above, it is clear that the curve classes $C_{1}$ and $C_{6}$ on $\overline{\rM}_{0,0}(\Gr(2, V^{*}), 2)$ are contracted. By duality, they correspond to $C_{2}$ and $C_{7}$ on $\bM$. Since deformations of $C_{2}$ (resp. $C_{7}$) cover $D_{\mathrm{unb}}$ (resp. $\Delta$), we obtain the result. 
\end{proof}

The Hilbert scheme construction can also be relativized. 

\begin{definition}\label{def:relativeHilbert}
Let $\bH_{\cS} := \Hilb^{2m+1}(\Gr(2, \cS))$ be the relative Hilbert scheme of conics over $\Gr(4, V^{*})$. This is a Zariski locally trivial bundle over $\Gr(4, V^{*})$ such that for $S \in \Gr(4, V^{*})$, its fiber over $S$ is $\Hilb^{2m+1}(\Gr(2, S))$. 
\end{definition}

\begin{definition}\label{def:B}
Let $\bB := \Bl_{\mathrm{OG}(3, \wedge^{2}\cS)_{\sigma_{(2)^{*}}}}\bG$, the blow-up of the Grassmannian bundle $\bG := \Gr(3, \wedge^{2}\cS)$ along the orthogonal Grassmannian bundle $\mathrm{OG}(3, \wedge^{2}\cS)_{\sigma_{(2)^{*}}}$ which parametrizes $\Sigma_{(2)^{*}}$-planes. 
\end{definition}

We have two birational contractions which are divisorial contractions of $D_{\deg, \cS}$, a divisor parametrizing pairs $(C, S)$ where $C \in \Hilb^{2m+1}(\Gr(2, V^{*}))$ is a conic such that the span $W$ of the union of the spaces parametrized by $C$ is 3-dimensional and $W \subset S \in \Gr(4, V^{*})$.

\begin{equation}\label{eqn:flipHilb}
	\xymatrix{&\bH_{\cS}\ar[ld]_{s} \ar[rd]^{r_{H}}\\
	\bB \ar@{<-->}[rr]^{\phi} && \bH.}
\end{equation}
The map $r_{H}$ is a standard birational morphism 
\[
	r_{H} : \bH_{\cS} \to \Hilb^{2m+1}(\Gr(2, V^{*})) \cong \bH.
\] 
$r_{H}$ sends a pair $(C, S)$ to $C$.

The map $s$ is obtained from the identification 
\[
	\bH_{\cS} \cong 
	\Bl_{2\mathrm{OG}(3, \wedge^{2}\cS)}\Gr(3, \wedge^{2}\cS),
\]
which is the relativization of \eqref{eqn:HilbblowupGr}. Thus $s$ is a blow-down, and it contracts the locus of conics in a $\Sigma_{(1,1)^{*}}$ (as a conic in $\Gr(2, V^{*})$) to a point associated to the plane $\Sigma_{(1,1)^{*}}$.

Recall that there is a regular morphism $env : \Hilb^{2m+1}(\Gr(2, V^{*})) \to \Gr(3, \wedge^{2}V^{*})$ which maps a conic $C \subset \Gr(2, V^{*}) \subset \PP(\wedge^{2}V^{*})$ to a unique $\PP^{2}$ containing $C$ (Definition \ref{def:envelope}). In Section \ref{ssec:determinantalvarieties}, the normalization of the image of $env$ was called $\overline{\bG}$. 

\begin{lemma}\label{lem:flipHilb}
The birational map $\phi$ in \eqref{eqn:flipHilb} is a flip over the blow-up $\widehat{\bG}$ of $\overline{\bG}$ along a subvariety isomorphic to $\mathrm{OG}(3, \wedge^{2}\cS)_{\sigma(2)^{*}}$. 
\end{lemma}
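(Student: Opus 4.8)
The plan is to realize $\phi$ as the base change, along the blow-up $\widehat{\bG}\to\overline{\bG}$, of the flip $\bG\dashrightarrow\widetilde{\bG}$ over $\overline{\bG}$ recalled in Section \ref{ssec:determinantalvarieties}, and to argue that this base change preserves the flip because the center of $\widehat{\bG}\to\overline{\bG}$ is disjoint from the flipping locus. First I would pin down the geometry over $\overline{\bG}$. The two small contractions $\bG\to\overline{\bG}$ and $\widetilde{\bG}\to\overline{\bG}$ are the two sides of a flip whose flipping locus is the image $Z\cong\Gr(3,V^{*})$ of the planes $\Gr(2,S_{0})$, $S_{0}\in\Gr(3,V^{*})$ (a conic of three-dimensional span lies in such a plane): over a point $[\wedge^{2}S_{0}]\in Z$ the fiber of $\bG\to\overline{\bG}$ is the $\PP^{n-3}$ of four-dimensional $S\supset S_{0}$, while that of $\widetilde{\bG}\to\overline{\bG}$ is the $\PP^{5}$ of conics in $\Gr(2,S_{0})$. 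The center of $\widehat{\bG}\to\overline{\bG}$ is the image under $env$ of the divisor of conics spanning a $\Sigma_{(2)^{*}}$-plane (the divisor contracted by $\bH\to\widetilde{\bG}$ via $C_{1}$); this image is isomorphic to $\mathrm{OG}(3,\wedge^{2}\cS)_{\sigma_{(2)^{*}}}$, and since the two plane families are disjoint it is disjoint from $Z$.

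Next I would construct the two morphisms to $\widehat{\bG}$ and check that each is small. Because $\bG\to\overline{\bG}$ is an isomorphism near $\mathrm{OG}(3,\wedge^{2}\cS)_{\sigma_{(2)^{*}}}$ --- its exceptional locus sits over the disjoint $Z$ --- pulling back $\widehat{\bG}\to\overline{\bG}$ identifies $\bG\times_{\overline{\bG}}\widehat{\bG}$ with $\Bl_{\mathrm{OG}(3,\wedge^{2}\cS)_{\sigma_{(2)^{*}}}}\bG=\bB$ of Definition \ref{def:B}; hence $\bB\to\widehat{\bG}$ is the base change of $\bG\to\overline{\bG}$ and remains a small contraction with $\PP^{n-3}$ fibers over $Z$. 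For the other side I would use the envelope directly: the scheme-theoretic preimage $env^{-1}(\mathrm{OG}(3,\wedge^{2}\cS)_{\sigma_{(2)^{*}}})$ is exactly the divisor of conics in $\Sigma_{(2)^{*}}$-planes, which is Cartier since $\bH$ is smooth, so by the universal property of the blow-up $env$ lifts to a morphism $\bH\to\widehat{\bG}$. This lift is an isomorphism wherever $env$ is, it carries the $\Sigma_{(2)^{*}}$-conic divisor onto the exceptional divisor of $\widehat{\bG}\to\overline{\bG}$ (so that divisor is no longer contracted), and it contracts only the remaining codimension $n-2$ locus of conics in the planes $\Gr(2,S_{0})$ onto $Z$ with $\PP^{5}$ fibers; thus $\bH\to\widehat{\bG}$ is small as well (for $n>3$; when $n=3$ this codimension is one and $\bH\to\widehat{\bG}$ degenerates to an honest blow-down, consistent with Theorem \ref{thm:n=3case}).

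Granting both small contractions, the induced map $\phi$ is an isomorphism in codimension one; this is also visible directly in \eqref{eqn:flipHilb}, since $s$ and $r_{H}$ contract the same divisor $D_{\deg,\cS}$. Because the two contractions $\bB\to\widehat{\bG}$ and $\bH\to\widehat{\bG}$ have rulings of different dimension over $Z$ ($\PP^{n-3}$ versus $\PP^{5}$), $\phi$ is a nontrivial small modification, and it is a flip with the expected sign: blowing up a center disjoint from $Z$ does not alter the relative cone over $Z$, so $\phi$ inherits the flip structure of the $D$-flip $\bG\dashrightarrow\widetilde{\bG}$. I would confirm the sign numerically by intersecting the two ruling classes --- a line in the $\PP^{n-3}$-fiber and the ruling class of the $\PP^{5}$-fibration (a pencil of conics in a plane $\Gr(2,S_{0})$, cf. Definition \ref{def:curveclasses}) --- with the wall divisor $H_{\sigma_{1,1}}+P$ that corresponds to $\widehat{\bG}$ (Item (13) of Theorem \ref{thm:mainintro}), using the intersection table of \cite{CC10}, where they pair with opposite sign against $D_{\mathrm{unb}}$ and $H_{\sigma_{2}}$.

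I expect the main obstacle to be the $\bH$-side of the second step, namely the smallness of $\bH\to\widehat{\bG}$ (equivalently $\bH\cong\widetilde{\bG}\times_{\overline{\bG}}\widehat{\bG}$). The difficulty is that $env$ itself is \emph{not} small --- it blows down the $\Sigma_{(2)^{*}}$-conic divisor --- so one must show that blowing up precisely its image absorbs this divisorial contraction while leaving the genuinely small contraction onto $Z$ intact. This hinges on two points I would verify fiberwise over $\Gr(4,V^{*})$ using the model \eqref{eqn:HilbblowupGr}: that $env^{-1}(\mathrm{OG}(3,\wedge^{2}\cS)_{\sigma_{(2)^{*}}})$ is Cartier and reduced, and that the loci of conics in the two plane families are disjoint, so that the lifted map is an isomorphism in codimension one onto the complement of $Z$.
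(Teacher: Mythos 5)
Your proposal is correct, and its core verifications coincide with the paper's: both arguments rest on the disjointness of the two plane families, on the fact that $\bG \to \overline{\bG}$ is injective on (hence an isomorphism near) $\mathrm{OG}(3,\wedge^{2}\cS)_{\sigma_{(2)^{*}}}$, and on the fiber computation $\PP^{n-3}$ versus $\PP^{5}$ over the image $Z$ of the $\sigma_{(1,1)^{*}}$-planes. The difference is in packaging: the paper constructs \emph{both} morphisms $\alpha:\bB\to\widehat{\bG}$ and $\beta:\bH\to\widehat{\bG}$ from the universal property of the blow-up (asserting, as you do only for $\beta$, that the preimage of the center is a divisor) and then exhibits the two small contractions, never addressing the sign condition implicit in the word ``flip''; you instead identify $\bB\cong\bG\times_{\overline{\bG}}\widehat{\bG}$ and $\bH\cong\widetilde{\bG}\times_{\overline{\bG}}\widehat{\bG}$, so that $\phi$ is literally the base change of the $D$-flip $\bG\dashrightarrow\widetilde{\bG}$ of \cite{HT15} along a blow-up whose center misses the flipping locus, and the $D$-negativity/positivity comes for free. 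That is a modest but genuine gain over the paper's argument. Three cautions. First, ``Cartier since $\bH$ is smooth'' is a non-sequitur --- a scheme-theoretic preimage supported in codimension one need not be Cartier on a smooth variety --- but the conclusion is true and you correctly isolate it as the point to verify: it follows from the identification of $\bH$ with $\Bl_{\mathrm{OG}(3,\wedge^{2}\cS)_{\sigma_{(2)^{*}}}}\bG$ near the $\sigma_{(2)^{*}}$-conic divisor (via \eqref{eqn:HilbblowupGr}, the disjointness of the two conic loci, and the fact that $r_{H}$ and $s$ in \eqref{eqn:flipHilb} are isomorphisms there); the paper glosses over exactly the same point. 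Second, the wall divisor $aH_{\sigma_{1,1}}+bP$ corresponds to $\widehat{\bG}$ by Item (14), not Item (13), of Theorem \ref{thm:mainintro}, and Item (14) is itself proved using Lemma \ref{lem:flipHilb}, so your numerical sign check must rely on the intersection table of \cite{CC10} alone to avoid circularity (with the base-change argument in place it is redundant anyway). Third, you juggle two Schubert conventions --- the planes you call $\Sigma_{(2)^{*}}$ (in $\Gr(2,V^{*})$, following Section 5) are the $\Sigma_{(1,1)^{*}}$-planes of Sections 2--3 (in $\Gr(n-1,V)$), which is why their conics sweep out the divisor $D_{\deg}$ contracted via $C_{1}$; you do this consistently, but it should be stated explicitly in a written-up proof.
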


\begin{proof}
Let $\varphi : \bG := \Gr(3, \wedge^{2}\cS) \to \overline{\bG}$ be the morphism obtained from the standard projection $\Gr(3, \wedge^{2}\cS) \to \Gr(3, \wedge^{2}V^{*})$. An element of $\mathrm{OG}(3, \wedge^{2}\cS)_{\sigma_{(2)^{*}}} \subset \Gr(3, \wedge^{2}\cS)$ is a pair $(U, W)$ where $W \in \Gr(4, V^{*})$ and $U \in \Gr(3, \wedge^{2}W)$. Since $U$ is of type $\sigma_{(2)^{*}}$, $U$ is generated by $v_{i}\wedge v$ for $1 \le i \le 3$ and a fixed $v \in W$. Thus $W$, which is the span of $v_{i}$ and $v$, is uniquely determined by $U$. Therefore $\varphi(\mathrm{OG}(3, \wedge^{2}\cS)) \cong \mathrm{OG}(3, \wedge^{2}\cS)$. 

Consider the following diagram:
\[
	\xymatrix{
	&\bH_{\cS}\ar[ld] \ar[rd]\\
	\bB \ar[dd] \ar@{-->}[rd]^{\alpha} \ar[rdd] &&
	\bH \ar@{-->}[ld]_{\beta} \ar[ldd]\\
	&\widehat{\bG} \ar[d]\\
	\bG \ar[r]^{\varphi} & \overline{\bG}.}
\]
The preimage of $\mathrm{OG}(3, \wedge^{2}\cS)_{\sigma_{(2)^{*}}}$ in each $\bB$ and $\bH$ is a divisor. From the universal property of blow-up, we obtain two morphisms $\alpha$ and $\beta$. 

From the construction, $\alpha$ and $\beta$ are isomorphisms away from the image of $\mathrm{OG}(3, \wedge^{2}\cS)_{\sigma_{(1,1)^{*}}}$ in $\bH_{\cS}$. A point $x$ in the exceptional set of $\overline{\bG}$ represents a $\Sigma_{(1,1)^{*}} \subset \Gr(2, V^{*})$. Then $\beta^{-1}(x) \cong \PP^{5}$ is the space of conics in the $\Sigma_{(1,1)^{*}}$. $\alpha^{-1}(x)$ the space of $W \in \Gr(4, V^{*})$'s such that $\Sigma_{(1,1)^{*}} \subset \PP(\wedge^{2}W)$. This fiber is isomorphic to $\PP^{n-3}$. 
\end{proof}

\begin{remark}
The blow-up center $\mathrm{OG}(3, \wedge^{2}\cS)$ is isomorphic to $\Gr(3, \cQ)$-bundle over $\PP V^{*}$, where $\cQ$ is the rank $n$ universal quotient bundle over $\PP V^{*}$.
\end{remark}

\subsection{General case}\label{ssec:proof}

Now we are ready to prove the main result. From now, let $n > 3$ and le $V$ be an $(n+1)$-dimensional vector space. In the previous sections, we constructed new birational models of $\bM := \overline{\rM}_{0,0}(\Gr(n-1, V), 2)$. Once the associated model is constructed for each divisor class, as you will see, the proof is very straightforward and there is no technical difficulty since $\bM$ is a Mori dream space (\cite[Corollary 1.2]{CC10}).

\begin{proof}[Proof of Theorem \ref{thm:mainintro}]
Items (1), (2) are \cite[Proposition 3.6]{CC10}. A special case $n = 3$ of Item (3) was proved in \cite[Theorem 3.10]{CC10}, and the same idea can be used to the general case. Item (4) is \cite[Theorem 3.8]{CC10}. 

\textsf{Items (5), (6).} The partial desingularization $d : \bM \to \bK$ contracts two curve classes $C_{1}$ and $C_{6}$, and the second step $c : \bM \to \bX^{1}\git \SL_{2} \times \SL_{2}$ contracts $C_{6}$ (Corollary \ref{cor:partial}). Thus $\bM(aH_{\sigma_{2}}+bT) = \bX^{1}\git \SL_{2} \times \SL_{2}$ if $a, b > 0$ and $\bM(H_{\sigma_{2}}) = \bK$. By Corollary \ref{cor:partial}, $\Delta$ is in the exceptional locus of $\bM \to \bX^{1} \git \SL_{2} \times \SL_{2}$. So $\bM(aH_{\sigma_{2}}+bT + c\Delta) = \bM(aH_{\sigma_{2}}+bT)$. The case of $\bK$ is similar. 

\textsf{Item (7).} Note that $C_{1}$ can be regarded a curve in $\bH$, too. By the contraction $\bH \to \widetilde{\bG}$, $C_{1}$ is contracted (\cite[Proposition 4.13]{HT15}). Since $aH_{\sigma_{2}}+bP$ for $a, b > 0$ are nef divisors on $\bH$ with the property, $\bM(aH_{\sigma_{2}}+bP) = \widetilde{\bE}$. Again, $D_{\deg}$ is in the exceptional locus of $\bM \dashrightarrow \widetilde{\bE}$, $\bM(aH_{\sigma_{2}}+bP + cD_{deg}) = \bM(aH_{\sigma_{2}}+bP)$. 

\textsf{Item (13).} By definition of the divisor class $P$, $\bM(P) = \overline{\bG}$. Since the rational contraction $\bM \dashrightarrow \overline{\bG}$ contracts $D_{\deg}$, $\bM(aP + bD_{\deg}) = \bM(P)$. 

\textsf{Item (14).} The curve class $C_{2}$ can be regarded as a curve in $\bH$. From the proof of Lemma \ref{lem:flipHilb} and the duality, $C_{2}$ is contracted by $\bH \to \widehat{\bG}$. Since $aH_{\sigma_{1, 1}}+bP$ are semiample divisors on $\bH$ contracting $C_{2}$, (14) follows. 

\textsf{Item (12).} By Lemma \ref{lem:contL}, there is a birational contraction $\overline{d} : \bM \to \bL$, which contracts $D_{\mathrm{unb}}$ and $\Delta$. Note that $\overline{d}$ contracts two curve classes $C_{2}$ and $C_{7}$. Thus $\bM(H_{\sigma_{1,1}}) = \bL$. $\Delta$ is in the exceptional locus of $\overline{d}$ so we obtain the result.

\textsf{Item (9).} Let $D = aH_{\sigma_{1,1}}+bP + cD_{\mathrm{unb}}$ for $a, b, c > 0$. Let $\alpha$, $\beta$ be two contractions in the proof of Lemma \ref{lem:flipHilb}. Then by the proof of \textsf{Item (14)}, $-D$ is $\beta$-ample. Let $B$ be the 1-parameter family of data $(\Sigma_{(1,1)^{*}}, W)$ such that $\Sigma_{(1,1)^{*}} \subset \PP(\wedge^{2}W)$ and $W$ forms a line in $\Gr(4, V^{*})$. Then $B$ is contracted by $\alpha$. Furthermore, the curve cone of the exceptional set is generated by $B$, since it is isomorphic to a projective space. Thus if $D \cdot B_{1} > 0$, then $D$ is $\alpha$-ample and the proof is completed. Note that $aH_{\sigma_{1,1}}+bP$ is the pull-back of an ample divisor on $\widehat{\bG}$ by \textsf{Item (14)}. Thus $B \cdot H_{\sigma_{1,1}} = B \cdot P = 0$. Also since $W$ varies, the push-forward of $B$ by $\bB \to \bG \to \Gr(4, V^{*})$ is a curve. Now $D_{\mathrm{unb}}$ is the pull-back of an ample divisor on $\Gr(4, V^{*}) \cong \Gr(n-3, V)$ by Definition \ref{def:divisorclasses}, $B \cdot D_{\mathrm{unb}} > 0$.

\textsf{Item (8).} Since the contraction $\bM(aH_{\sigma_{1,1}}+bP + cD_{\mathrm{unb}}) = \bB \to \overline{\bG} = \bM(P)$ factors through $\bG = \Gr(3, \wedge^{2}\cS)$, either $\bM(aP + bD_{\mathrm{unb}})$ or $\bM(aH_{\sigma_{1,1}}+bD_{\mathrm{unb}})$ for $a, b > 0$ is $\bG$. It is straightfoward to check that the push-forward of the curve class $C_{1}$ is contracted by $\bB \to \bG$. Thus $\bM(aP + bD_{\mathrm{unb}}) \cong \bG$. $D_{\deg}$ is an exceptional divisor of $\bB \to \bG$. Thus we can obtain the result.

\textsf{Item (10).} The relative contraction $\bH_{\cS} \to \bK_{\cS}$ descends to $\bB \to \bK_{\cS} \cong \rM_{\PP \cS}(m^{2}+3m+2)$ by rigidity lemma. $\bK$ admits two morphisms to $\rM_{\PP V^{*}}(m^{2}+3m+2)$ and $\Gr(4, V^{*})$. Thus $H_{\sigma_{1,1}}$ and $D_{\mathrm{unb}}$ are two semiample divisors on $\rM_{\PP \cS}(m^{2}+3m+2)$. The product morphism 
\[
	\bK = \rM_{\PP \cS}(m^{2}+3m+2) \to 
	\rM_{\PP V^{*}}(m^{2}+3m+2) \times \Gr(4, V^{*})
\]
is injective. Thus $aH_{\sigma_{1,1}} + bD_{\mathrm{unb}}$ with $a, b > 0$ is an ample divisor on $\rM_{\PP \cS}(m^{2}+3m+2)$. Therefore $\bM(aH_{\sigma_{1,1}}+bD_{\mathrm{unb}}) \cong \bK$. Finally, $\Delta$ is an exceptional divisor for $\bM \dashrightarrow \rM_{\PP \cS}(m^{2}+3m+2)$, so we obtain the statement.

\textsf{Item (11).} Let $\bR$ be the normalization of the image of the product map 
\[
	\bM \to \rM_{\PP V^{*}}(m^{2}+3m+2) \times \bU.
\]
The first map is obtained from Lemma \ref{lem:contL}. Now it is clear that $\bR$ has two birational morphisms to $\bL$ and $\bU$. Furthermore, since both $\bM \to \bL$ and $\bM \to \bU$ contracts $C_{7}$, so is $\bM \to \bR$. Thus $\bM(aH_{\sigma_{1,1}}+bT) \cong \bR$. Since $\Delta$ is contracted by $\bM \to \bR$, $\bM(aH_{\sigma_{1,1}}+bT+c\Delta) \cong \bR$. 

\textsf{Item (15).} Since $\bM$ is a Mori dream space, $\bM(a\Delta + bD_{\deg})$ is a contraction of $\bM(aH_{\sigma_{2}}+bD_{\deg}+c\Delta) \cong \bK$. Since the Picard number of $\bK$ is one, $\bM(a\Delta + bD_{\deg})$ has to be a point. 

\textsf{Item (16).} From Definition \ref{def:divisorclasses}, we obtain $\bM(D_{\mathrm{unb}}) = \Gr(4, V^{*})$. Since $\Delta$ is in the exceptional locus of the rational contraction $\bM \dashrightarrow \Gr(4, V^{*})$, $\bM(aD_{\mathrm{unb}}+b\Delta) \cong \bM(D_{\mathrm{unb}}) = \Gr(4, V^{*})$. The other case is similar.
\end{proof}

\section{Applications}\label{sec:application}

A quick application of describing birational morphisms between models in terms of explicit contractions is the computation of topological invariants. In this section, we leave two computations of motivic invariants of double symmetroid $T_{4} \cong \bK$ and the moduli space $\rM_{\PP^{2}}(4m+2)$ of semistable torsion sheaves on $\PP^{2}$. 

\subsection{Motivic invariants of the double symmetorid}\label{ssec:symmetroid}

An explicit description of the partial desingularization in Section \ref{ssec:partialdesing} enables us to compute the virtual Poincar\'e polynomial of $T_{4} \cong \bK$. A nice summary of the definition and basic properties of the virtual Poincar\'e polynomial $P(X)$ of a projective variety $X$ can be found in \cite[Section 2]{Mun08}. The virtual Poincar\'e polynomial of $\bM$ were calculated by A. Mart\'in by using Bia\/lynicki-Birula decomposition:

\begin{proposition}[\protect{\cite[Theorem 3.1]{LM14}}]
The virtual Poincar\'e polynomial of $\bM$ is
\[
	\frac{[(1+q^{n+1})(1+q^3)-q(1+q)(q^2+q^{n-1})]
	(1-q^{n+1})(1-q^{n})(1-q^{n-1})}{(1-q)^3(1-q^2)^2}.
\]
\end{proposition}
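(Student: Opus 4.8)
The statement is quoted from \cite{LM14}, where it is obtained via a Bia\l ynicki-Birula (BB) decomposition, and the plan is to carry out that computation. The key structural fact is that $\bM$ has only finite quotient singularities (it is the partial desingularization of $\bK$, Theorem \ref{thm:Kirwandes}) and that the virtual Poincar\'e polynomial $P$ is motivic: it is additive over locally closed stratifications and satisfies $P(E) = q^{r}P(B)$ for an affine bundle $E \to B$ of rank $r$. Hence it suffices to produce a torus action and sum the cell contributions. First I would fix the maximal torus $T \subset \GL(V)$ and choose a generic one-parameter subgroup $\lambda : \CC^{*} \to T$, so that the induced action on $\Gr(n-1, V) \cong \Gr(2, V^{*})$ has isolated fixed points, namely the coordinate $2$-planes $\langle e_{i}^{*}, e_{j}^{*}\rangle$, and so that no accidental linear relations among the weights produce extra invariant conics. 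This action lifts to $\bM = \overline{\rM}_{0,0}(\Gr(2, V^{*}), 2)$, and the BB decomposition yields
\[
	P(\bM) = \sum_{F} P(F)\, q^{w^{+}(F)},
\]
the sum being over the connected components $F$ of the $\CC^{*}$-fixed locus, with $w^{+}(F)$ the rank of the subbundle of $N_{F/\bM}$ on which $\lambda$ acts with positive weights.

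The second step is to enumerate the fixed loci. By the standard analysis of torus-fixed stable maps (decorated graphs in the sense of Kontsevich), and since for generic $\lambda$ no nondegenerate invariant conic survives, a degree $2$ fixed map falls into one of two families: an irreducible domain realized as a double cover of a $T$-invariant line $\ell$, fully ramified over the two fixed points of $\ell$; or a nodal domain $C_{1} \cup C_{2}$ with each $C_{i}$ mapping isomorphically to a $T$-invariant line, the node lying over a common fixed point, and the two lines distinct (a repeated line would force arithmetic genus one). Here the $T$-invariant lines are exactly those joining two coordinate $2$-planes differing in a single basis vector. For generic $\lambda$ each such map is rigid, so every $F$ is a (possibly stacky) point with $P(F) = 1$; the double covers carry a $\ZZ/2$ deck automorphism which affects only the orbifold structure, not $P$ of the coarse space. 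Thus $P(\bM)$ reduces to the weighted count $\sum_{F} q^{w^{+}(F)}$ over this finite combinatorial set.

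The third step is the weight computation. For each fixed map $f : C \to \Gr(2, V^{*})$ I would determine the $\CC^{*}$-weights on the tangent space $T_{[f]}\bM$, which is assembled from $H^{0}(C, f^{*}T\Gr(2, V^{*}))$, the infinitesimal automorphisms of the domain (to be subtracted), and, for the nodal maps, the one-dimensional node-smoothing deformation. Using the identification $T\Gr(2, V^{*}) \cong \cHom(\cS, \cQ)$ with $\cS, \cQ$ the tautological sub- and quotient bundles of $\Gr(2, V^{*})$, the weights on $H^{0}(f^{*}T\Gr)$ can be read off from the weights of the coordinate planes and lines occurring in the graph of $f$. Counting the positive ones produces $w^{+}(F)$ as an explicit linear function of the combinatorial data, and summing the monomials $q^{w^{+}(F)}$ over the two families, then collapsing the geometric-series patterns indexed by the free basis vectors, should give the claimed closed form; the Gaussian-binomial factor $(1-q^{n+1})(1-q^{n})/[(1-q)(1-q^{2})]$ there is precisely $P(\Gr(2, V^{*}))$, reflecting the choice of the anchoring fixed point.

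I expect the main obstacle to be the bookkeeping of the weight computation: keeping consistent weight conventions across the two families, and correctly relating the deformation space of the stable \emph{map} (where automorphisms enter) to the tangent space of the coarse moduli space, especially for the $\ZZ/2$-covered components. A secondary difficulty is recognizing the final rational function: the raw output is a large but highly structured collection of monomials, and one must identify the underlying geometric series — most cleanly by organizing the sum so that one factor tracks the anchor fixed point (giving the Grassmannian Poincar\'e polynomial) while the remaining factors record the degree-$2$ decorations. Matching against the stated expression, for instance by checking small values of $n$ or by an induction on $\dim V$, would complete the argument.
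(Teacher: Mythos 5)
The paper itself gives no proof of this proposition: it is imported verbatim from \cite[Theorem 3.1]{LM14}, whose method is exactly the Bia\l ynicki-Birula decomposition you outline, so your overall strategy matches the source. However, your execution has a genuine gap in the second step --- the enumeration of the $\CC^{*}$-fixed loci --- and the error propagates into the weight sum, so the computation as described would not reproduce the stated polynomial.

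Concretely, you discard the fixed maps whose domain is nodal and whose two components map isomorphically onto the \emph{same} invariant line $\ell$, with the node lying over one of the two fixed points of $\ell$; your justification (``a repeated line would force arithmetic genus one'') is false. The dual graph of such a domain is a tree (two vertices, one edge), so the arithmetic genus is $0$; each component is non-constant, so the map is stable; and it is $\CC^{*}$-fixed, since composing with the torus action can be undone by an automorphism of each component fixing the node. It is also a different moduli point from the ramified double cover of $\ell$, whose domain is irreducible. The local model already shows these points are needed: $\overline{\rM}_{0,0}(\PP^{1},2) \cong \PP^{2}$ has three torus-fixed points --- the double cover ramified over $\{0,\infty\}$ and the two broken covers with node over $0$ resp.\ $\infty$ --- not one. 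Globally, take $n=3$: the claimed formula simplifies to $(1-q^{3})^{2}(1-q^{4})^{2}/[(1-q)^{3}(1-q^{2})]$, whose value at $q=1$ (the number of fixed points, all isolated here) is $72$. Your list gives $12$ double covers of the $12$ invariant lines plus $6\cdot\binom{4}{2}=36$ pairs of distinct invariant lines through a common fixed point, i.e.\ $48$; the missing $24 = 12\cdot 2$ are exactly the broken covers, one for each choice of an invariant line and of the node's image among its two fixed points. This third stratum must be restored, with its own weight computation (including the node-smoothing weight), before the monomial sum can be matched against the stated rational function.
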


\begin{proposition}\label{prop:virtualdouble}
The virtual Poincar\'e polynomial of $T_4$ is
\begin{equation}\label{eqn:PT4}
	P(\bM) - (P(\overline{\rM}_{0,0}(\PP^{n-1}, 2))-1)
	\left(\frac{1-q^{n}}{1-q}\right)
	- \left(P((\PP^{n-2})^{2})-1\right)
	\left(\frac{1}{2}\left(P(\PP^{n})^{2}+\frac{1-q^{2n+2}}{1-q^{2}}
	\right)-P(\PP^{n})\right).
\end{equation}
\end{proposition}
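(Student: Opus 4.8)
The plan is to obtain $P(T_{4})$ by reversing the partial desingularization $d : \bM \to \bK$ of Theorem \ref{thm:Kirwandes}, using the identification $T_{4} \cong \bK$ from Section \ref{ssec:Kroneckerdblcover} together with the fact that the virtual Poincar\'e polynomial $P$ is additive on locally closed stratifications and multiplicative along Zariski locally trivial fibrations (\cite[Section 2]{Mun08}). Since $d$ is an isomorphism over the stable locus, I would first split $\bK = \bK^{s} \sqcup \Sigma$ into its stable and strictly semistable loci and write $\bM = d^{-1}(\bK^{s}) \sqcup d^{-1}(\Sigma)$ with $d^{-1}(\bK^{s}) \cong \bK^{s}$. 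Additivity then collapses the problem to the behaviour of $d$ over $\Sigma$, namely
\[
	P(\bM) - P(T_{4}) = P(d^{-1}(\Sigma)) - P(\Sigma).
\]

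Next I would describe $\Sigma$ and the fibers of $d$ over it. By Lemmas \ref{lem:stability} and \ref{lem:stabilitycomparison} the closed strictly semistable orbits are represented by diagonal matrices $\mathrm{diag}(g,k)$, equivalently by the sheaves $\cO_{H} \oplus \cO_{H'}$ with $H = \{g = 0\}$ and $H' = \{k = 0\}$; the $S$-equivalence class records only the unordered pair, so $\Sigma \cong \Sym^{2}\PP V^{*}$. I would stratify $\Sigma$ into the diagonal $E_{0} \cong \PP V^{*}$, consisting of the double hyperplanes $2\cO_{H}$ (the image $d(D_{\deg})$), and its open complement $E_{1} = \Sym^{2}\PP V^{*}\setminus E_{0}$ of distinct pairs. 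By Corollary \ref{cor:partial}, over a point of $E_{0}$ the fiber of $d$ is $\overline{\rM}_{0,0}(\PP^{n-1}, 2)$, whereas over $E_{1}$ the first blow-down is an isomorphism and the fiber is the second-step fiber $(\PP^{n-2})^{2}$. Granting that these families are Zariski locally trivial, multiplicativity gives
\[
	P(d^{-1}(\Sigma)) = P(E_{0})\, P(\overline{\rM}_{0,0}(\PP^{n-1},2)) + P(E_{1})\, P\big((\PP^{n-2})^{2}\big).
\]

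To finish I would evaluate the base polynomials. The symmetric-square identity $P(\Sym^{2}X) = \tfrac{1}{2}\big(P(X)^{2} + P(X)|_{q \mapsto q^{2}}\big)$ applied to $X = \PP V^{*}$ yields $P(\Sym^{2}\PP V^{*}) = \tfrac{1}{2}\big(P(\PP^{n})^{2} + \tfrac{1-q^{2n+2}}{1-q^{2}}\big)$, hence $P(E_{1}) = P(\Sym^{2}\PP V^{*}) - P(\PP V^{*})$, while $P(E_{0}) = P(\PP V^{*})$. Substituting these into
\[
	P(T_{4}) = P(\bM) - P(E_{0})\big(P(\overline{\rM}_{0,0}(\PP^{n-1},2)) - 1\big) - P(E_{1})\big(P((\PP^{n-2})^{2}) - 1\big)
\]
and simplifying produces \eqref{eqn:PT4}, with $P(\bM)$ supplied by \cite[Theorem 3.1]{LM14}.

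The main obstacle is the multiplicativity step: I must verify that $d$ restricts to a genuinely Zariski locally trivial fibration over each of $E_{0}$ and $E_{1}$, not merely a family with constant fiber type. For $E_{0}$ this amounts to recognizing $d^{-1}(E_{0})$ as the relative moduli space $\overline{\rM}_{0,0}(\PP(\cS'),2)$ of a tautological hyperplane bundle $\cS'$ over $\PP V^{*}$, and for $E_{1}$ as a fiber square of the projective bundle $\PP(H \cap H')^{*}$ over $E_{1}$; both should follow from the explicit projectivized normal cone descriptions in Section \ref{ssec:partialdesing}, but checking local triviality and its compatibility with the stabilization $\Gamma \to \overline{\Gamma}$ in the proof of Theorem \ref{thm:Kirwandes} requires care. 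A secondary point is to confirm that $(\PP^{n-2})^{2}$ is the fiber over all of $E_{1}$, so that no finer stratification is needed, and that the determinant map matches the diagonal of $\Sym^{2}\PP V^{*}$ exactly with $E_{0}$.
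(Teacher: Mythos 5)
Your proposal is essentially the paper's own proof: the paper stratifies $T_{4}$ by quadric rank, with $\rN_{1}\cong \PP V^{*}$ playing the role of your $E_{0}$ and $\rN_{2}\cong (\PP V^{*}\times \PP V^{*}\setminus \Delta)/\ZZ_{2}$ that of your $E_{1}$, identifies the fibers of $d$ over these strata via Corollary \ref{cor:partial}, and concludes by additivity and multiplicativity together with the same symmetric-square identity from \cite[Section 2]{Mun08}; the local-triviality point you flag as the ``main obstacle'' is passed over silently there as well, so it is not a difference in substance.

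One discrepancy is worth recording: your (correct) substitution gives the coefficient $P(E_{0})=P(\PP V^{*})=\frac{1-q^{n+1}}{1-q}$ in the first correction term, whereas \eqref{eqn:PT4} as printed has $\frac{1-q^{n}}{1-q}$. The printed exponent is a typo in the paper, as one checks against the listed values of $P(T_{4}(n))$ for small $n$: for $n=3$, evaluating at $q=2$ gives $P(\bM)=3675$, $P(\overline{\rM}_{0,0}(\PP^{2},2))=105$, $P((\PP^{1})^{2})=9$, $P(\rN_{2})=140$, so the formula with $\frac{1-q^{n+1}}{1-q}=15$ yields $3675-104\cdot 15-8\cdot 140=995$, matching the remark's value $(q^7+q^6+q^2+q+1)(q^2+1)\big|_{q=2}=995$, while the printed $\frac{1-q^{n}}{1-q}=7$ would give $1827$. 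So your derivation proves the corrected identity rather than \eqref{eqn:PT4} verbatim, and you should not assert that the printed formula follows literally from your substitution.
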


\begin{proof}
Let $d : \bM \to T_4$ be the desingularization morphism in Theorem \ref{thm:Kirwandes}. On $T_4$, let $\rN_{i}$ be the locally closed subvariety parametrizing rank $i$ quadrics. By Corollary \ref{cor:partial}, for any closed point $x$ on $\rN_{1} \cong \PP V^{*}$, $d^{-1}(x) \cong \overline{\rM}_{0,0}(\PP^{n-1}, 2)$ and for any closed point $y \in \rN_{2} \cong (\PP {V^{*}}\times\PP {V^{*}}-\Delta)/\ZZ_{2}$ where $\Delta$ is the diagonal of $\PP {V^{*}}\times\PP {V^{*}}$, $d^{-1}(y) \cong (\PP^{n-2})^{2}$. Thus we obtain
\[
	P(T_4) = P(\bM)
	- P(\rN_{1})P(\overline{\rM}_{0,0}(\PP^{n-1}, 2)) + P(\rN_{1})
	- P(\rN_{2})P((\PP^{n-2})^{2}) + P(\rN_{2})\\
\]
By \cite[Section 2]{Mun08}, $P(\rN_{2})=\frac{1}{2} (P(\PP^n)^2+\frac{1-q^{2n+2}}{1-q^2})-P(\PP^n)$. Thus we obtain \eqref{eqn:PT4}.
\end{proof}

Note that $P(\overline{\rM}_{0,0}(\PP^{n-1}, 2))=\frac{(1-q^{n+1})(1-q^n)(1-q^{n-1})}{(1-q)^2(1-q^2)}$ by \cite[Theorem 1.3]{KM10}.

\begin{remark}
Let $T_{4}(n)$ be the double symmetroid for the ($n+1)$-dimensional vector space $V$. By using computer algebra system, we are able to obtain a simpler expression of $P(T_{4}(n))$ for small $n$. For instance, 
\begin{enumerate}
\item $P(T_4(3))=(q^7+q^6+q^2+q+1)(q^2+1)$,
\item $P(T_4(4))=(q^7-q^6+q^5-q^4+q^2-q+1)(q^4+q^3+q^2+q+1)(q^2+q+1)$,
\item $P(T_4(5))=(q^{13}+q^{12}+q^{11}+q^{10}-q^8-q^7+q^4+q^3+q^2+q+1)(q^4+q^2+1)$, and
\item $P(T_{4}(6)) = (q^{15} + q^{13} + q^{11} - q^{10} - q^8 + q^4 + q^2 + 1)(q^6 + q^5 + q^4 + q^3 + q^2 + q + 1)$.
\end{enumerate}
\end{remark}

\subsection{Motivic invariants of the pure sheaves on $\PP^2$}\label{ssec:Poinpolymodulisheaves}

Let $\rM_{\PP^{2}}(dm+\chi)$ be the moduli space of one-dimensional semistable sheaves on $\PP^{2}$ with Hilbert polynomial $dm+\chi$. In several  papers including \cite{CC12, CM14, Yua14, CC15, Ien16}, people have computed $P(\rM_{\PP^{2}}(dm+\chi))$ when $d$ and $\chi$ are coprime. If $d$ and $\chi$ are not coprime, because of the existence of singular locus, the computation of the Poincar\'e polynomial seems to be hard. Proposition \ref{prop:virtualdouble} and Bridgeland wall crossing (\cite[Section 6.1]{BMW14}) enables us to compute $P(\rM_{\PP^{2}}(4m+2))$, which is the first non-trivial case with $(d, \chi) \ne 1$. 

\begin{proposition}\label{prop:Ppolysheaves}
The virtual Poincar\'e polynomial of $\rM_{\PP^{2}}(4m+2)$ is given by
\[
\begin{split}
P(\rM_{\PP^{2}}(4m+2)) &=\; q^{17}+2q^{16}+5 q^{15}+9 q^{14}+11 q^{13}+11 q^{12}+10 q^{11}+10 q^{10}\\
&+9 q^9+10 q^8+10 q^7+12 q^6+12 q^5+12 q^4+9 q^3+5 q^2+2 q+1.
\end{split}
\]
\end{proposition}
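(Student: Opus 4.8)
The plan is to derive $P(\rM_{\PP^2}(4m+2))$ from the already computed virtual Poincar\'e polynomial of $\bK$ by following the explicit Bridgeland wall-crossing of \cite[Section 6.1]{BMW14}, exploiting that the virtual Poincar\'e polynomial is motivic and additive over locally closed stratifications. First I would specialize to $n = 5$, so that $V$ is $6$-dimensional. In this case $\bK = \PP(V^{*}\otimes \mathfrak{gl}_{2})\git \SL_{2}\times \SL_{2}$ is exactly the Kronecker moduli space of type $(6;2,2)$, and by the isomorphism $\bK \cong T_{4}$ established earlier it coincides with the double symmetroid $T_{4}(5)$. Its virtual Poincar\'e polynomial is therefore the $n=5$ specialization of Proposition \ref{prop:virtualdouble}, namely the closed form $P(T_{4}(5))=(q^{13}+q^{12}+q^{11}+q^{10}-q^{8}-q^{7}+q^{4}+q^{3}+q^{2}+q+1)(q^{4}+q^{2}+1)$ recorded in the Remark above. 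Note that $\dim \bK = 4n-3 = 17 = \dim \rM_{\PP^{2}}(4m+2)$, consistent with the two spaces being birational.

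Next I would set up the wall-crossing. The $(6;2,2)$ Kronecker quiver is precisely the quiver attached to the exceptional pair $(\cO_{\PP^{2}}(-2), \cO_{\PP^{2}})$ on $\PP^{2}$, since $\Hom(\cO_{\PP^{2}}(-2), \cO_{\PP^{2}}) \cong \Sym^{2}(\CC^{3})^{*}$ is $6$-dimensional, and with dimension vector $(2,2)$ this quiver moduli realizes the coarse Bridgeland moduli space for the numerical class with Hilbert polynomial $4m+2$ in the \emph{Kronecker region}. As the stability parameter moves from this region to the Gieseker chamber, where the moduli space is $\rM_{\PP^{2}}(4m+2)$, one crosses the finite list of walls described in \cite[Section 6.1]{BMW14}. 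Each wall-crossing modifies the coarse moduli space along the locus of objects that become (semi)stable on the wall, replacing one projective-bundle resolution of the S-equivalence strata by the other.

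Then I would track $P$ across each wall. Writing $B_{i}$ for the locus of polystable objects on the $i$-th wall and $\PP^{a_{i}^{\pm}} = \PP(\Ext^{1}(A_{i}, B_{i}))$, $\PP(\Ext^{1}(B_{i}, A_{i}))$ for the projectivized extension spaces of the destabilizing pair $(A_{i}, B_{i})$, motivic additivity gives the telescoping formula
\[
	P(\rM_{\PP^{2}}(4m+2)) = P(\bK) + \sum_{i} P(B_{i})\bigl(P(\PP^{a_{i}^{+}}) - P(\PP^{a_{i}^{-}})\bigr).
\]
The bases $B_{i}$ are products of lower-dimensional moduli spaces --- projective spaces, Hilbert schemes of points on $\PP^{2}$, and moduli of one-dimensional sheaves of degree $1,2,3$ --- whose virtual Poincar\'e polynomials are classically known, so each summand is elementary to evaluate. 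Summing the contributions and simplifying (most safely with a computer algebra system, as in the Remark above) yields the stated expression.

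The main obstacle is the precise combinatorial identification of the walls and their centers: one must determine, from the numerical data of the $4m+2$ class, exactly which subobject/quotient pairs $(A_{i}, B_{i})$ destabilize at each wall, compute the dimensions $\dim \Ext^{1}(A_{i}, B_{i})$ and $\dim \Ext^{1}(B_{i}, A_{i})$ governing the bundle fibers, and verify that the strata $B_{i}$ are exactly the claimed products of known moduli. This bookkeeping is where all the content lies; once the BMW wall structure and these $\Ext$-dimensions are in hand, the passage from $P(\bK) = P(T_{4}(5))$ to the final polynomial is a mechanical simplification.
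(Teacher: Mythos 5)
Your strategy coincides with the paper's: identify $\bK$ (for $n=5$) with the double symmetroid $T_4(5)$, take $P(T_4(5))$ from Proposition \ref{prop:virtualdouble}, and transport it to $\rM_{\PP^{2}}(4m+2)$ through the Bridgeland wall-crossings of \cite[Section 6.1]{BMW14} using motivic additivity; your telescoping formula is exactly the shape of the computation the paper performs. However, there is a genuine gap: the entire numerical content of the proof is the wall data, and you never determine it --- you explicitly defer ``which subobject/quotient pairs destabilize at each wall'' and the relevant $\Ext$ dimensions to future bookkeeping. Moreover, your guess at what that bookkeeping will produce is wrong: you predict wall centers built from Hilbert schemes of points and from moduli of one-dimensional sheaves of degree $1,2,3$, whereas in fact there are exactly two walls (cf.\ Table \ref{tbl:wall}), with destabilizing pairs $(\cO_{\PP^{2}}(1),\, \cO_{\PP^{2}}(-3)[1])$ --- a rigid pair, so the center is a single point --- and $(I_p(1),\, I_q^{\vee}(-3)[1])$ with $p, q \in \PP^{2}$, so the center is $\PP^{2}\times\PP^{2}$; no moduli of one-dimensional sheaves of smaller degree appear. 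That your predicted stratification does not match the actual one shows the gap is substantive, not merely a matter of unexpressed routine detail.

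What is needed to close it is precisely what the paper supplies. At the first wall the two extension spaces have dimensions $\dim\Hom(\cO_{\PP^{2}}(-3), \cO_{\PP^{2}}(1)) = \dim \rH^{0}(\PP^{2}, \cO(4)) = 15$ and $\dim\Ext^{2}(\cO_{\PP^{2}}(1), \cO_{\PP^{2}}(-3)) = \dim \rH^{0}(\PP^{2}, \cO(1)) = 3$ (Serre duality), so crossing it exchanges a $\PP^{2}$ for a $\PP^{14}$ over a point; at the second wall the corresponding fibers over $\PP^{2}\times\PP^{2}$ are $\PP^{12}$ and a point. Motivic additivity then gives
\[
	P(\rM_{\PP^{2}}(4m+2)) = P(T_4(5)) + \left(P(\PP^{14}) - P(\PP^{2})\right) + P(\PP^{2}\times\PP^{2})\left(P(\PP^{12}) - 1\right),
\]
and substituting the closed form of $P(T_4(5))$ yields the stated polynomial. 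Until you pin down the two walls, their destabilizing objects, and these $\Ext$ dimensions, your outline cannot arrive at the specific coefficients claimed in the proposition.
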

\begin{proof}
From \cite[Section 4]{DM11}, we know that $T_4(5)$ is birational to $\rM_{\PP^{2}}(4m+2)$. Furthermore, as described in \cite[Section 6.1]{BMW14}, there are two wall-crossings from $\rM_{\PP^{2}}(4m+2)$ to $T_4(5)$. An object $F$ in the exceptional locus in each wall-crossing is an extension of a particular type, described in Table \ref{tbl:wall}. After the wall-crossing, we obtain new extensions $F'$. Here $\vee$ denotes the derived dual $\mathcal{RH}om(-,\cO_{\PP^{2}})$. Now by a simple calculation, one can see that
\[
P(\rM_{\PP^{2}}(4m+2))= (P(\PP^{14})-P(\PP^2))+ P(\PP^2\times \PP^2)(P(\PP^{12})-1)+P(T_4(5)).
\]
Combining these with Proposition \ref{prop:virtualdouble}, we obtain the result.
\end{proof}

\begin{table}[!ht]
\begin{tabular}{|c|c|}
\hline
The first wall&The second wall\\
\hline\hline
$\ses{\cO_{\PP^2}(1)}{F}{\cO_{\PP^{2}}(-3)[1]}$ & $\ses{I_p(1)}{F}{I_q^{\vee} (-3)[1]}$ for $p$ and $q\in \PP^2$\\
\hline
$\ses{\cO_{\PP^{2}}(-3)[1]}{F'}{\cO_{\PP^2}(1)}$ &$\ses{I_q^{\vee} (-3)[1]}{F'}{I_p(1)}$ for $p$ and $q\in \PP^2$ \\
\hline
\end{tabular}
\medskip
\caption{Bridgeland wall-crossings from $\rM_{\PP^{2}}(4m+2)$}
\label{tbl:wall}
\end{table}



\end{document}